\newcommand{\lam}{\lambda}
\newcommand{\eps}{\epsilon}
\newcommand{\Q}{\mathbb{Q}}
\newcommand{\N}{\mathbb{N}}
\newcommand{\E}{\mathbb{E}}
\renewcommand{\P}{\mathbb{P}}
\newcommand{\curlyG}{\mathcal{G}}
\newcommand{\curlyH}{\mathcal{H}}
\newcommand{\nbr}{\mathcal{N}}
\newcommand{\normal}{\mathcal{N}}
\renewcommand{\Pr}{\text{Pr}}
\DeclareMathOperator{\Var}{Var}
\DeclareMathOperator{\Binom}{Binom}
\DeclareMathOperator{\Pois}{Pois}
\newcommand{\toD}{\stackrel{d}{\to}}
\newtheorem{theorem}{Theorem}[section]
\newtheorem{lemma}[theorem]{Lemma}
\newtheorem{proposition}[theorem]{Proposition}
\newtheorem{definition}[theorem]{Definition}
\newtheorem{conjecture}[theorem]{Conjecture}
\newtheorem{remark}[theorem]{Remark}
\title{Reconstruction and Estimation in the Planted Partition Model}
\author[1,2]{Elchanan Mossel\thanks{
Supported by NSF grant DMS-1106999 and DOD ONR grant N000141110140}}
\author[1]{Joe Neeman${}^{\small *}$}
\author[1]{Allan Sly}
\affil[1]{Department of Statistics, UC Berkeley}
\affil[2]{Department of Computer Science, UC Berkeley}
\begin{document}
\maketitle

\begin{abstract}
The planted partition model (also known as the stochastic blockmodel)
is a classical cluster-exhibiting random graph model that has been
extensively studied in statistics, physics, and computer
science.
In its simplest form, the planted partition model is a model for
random graphs on $n$ nodes
with two equal-sized clusters, with an between-class edge probability of $q$
and a within-class edge probability of $p$.
Although most of the literature on this model has focused on the
case of increasing degrees (ie.\ $pn, qn \to \infty$ as $n \to \infty$),
the sparse case $p, q = O(1/n)$ is interesting both from a mathematical
and an applied point of view.

A striking conjecture of Decelle, Krzkala, Moore and Zdeborov\'a based on deep, non-rigorous ideas from statistical physics
gave a precise prediction for the algorithmic threshold of clustering in the sparse planted partition model.
In particular, if $p = a/n$ and $q = b/n$, then Decelle et al.\ conjectured
that it is possible to cluster in a way correlated with the true partition if
$(a - b)^2 > 2(a + b)$, and impossible if
$(a - b)^2 < 2(a + b)$.
By comparison, the best-known rigorous result is that of Coja-Oghlan, who showed
that clustering is possible if $(a - b)^2 > C (a + b)$ for some sufficiently large $C$.

We prove half of their prediction, showing that
it is indeed impossible to cluster if $(a - b)^2 < 2(a + b)$.
Furthermore we show that it is impossible even to estimate the model
parameters from the graph when $(a - b)^2 < 2(a + b)$;
on the other hand, we provide a simple and efficient algorithm for
estimating $a$ and $b$ when $(a - b)^2 > 2(a + b)$.
Following Decelle et al, our work establishes a rigorous connection
between the clustering problem, spin-glass models on the
Bethe lattice and the so called reconstruction problem. This connection
points to fascinating applications and open problems.
\end{abstract}

\section{Introduction}

\subsection{The planted partition problem}

The clustering problem in its general form is, given a
(possibly weighted) graph,
to divide its vertices into several strongly connected classes
with relatively weak cross-class connections.
This problem is fundamental in modern statistics,
machine learning and data mining, but its applications
range from population genetics~\cite{PriSteDon:00}, where it is used
to find genetically similar sub-populations, to image processing~\cite{ShiMalik:00,SonHlaBoy:98},
where it can be used to segment images or to group similar images,
to the study of social networks~\cite{NewWatStr:02}, where it is used to find
strongly connected groups of like-minded people.

The algorithms used for clustering are nearly as
diverse as their applications.  On one side are the hierarchical
clustering algorithms~\cite{Johnson:67} which build a hierarchy of larger
and larger communities, by either recursive aggregation
or division. On the other hand model-based statistical methods,
including the celebrated EM algorithm~\cite{DemLaiRub:77}, are used to fit
cluster-exhibiting statistical models to the data.
A third group of methods work by optimizing some sort of cost
function, for example by finding a minimum cut~\cite{HartuvShamir:00,ShiMalik:00} or by maximizing
the Girvan-Newman modularity~\cite{BC:09,NewmanGirvan:04}.

Despite the variety of available clustering algorithms,
the theory of clustering contains some fascinating
and fundamental algorithmic challenges. For example,
the ``min-bisection'' problem -- which asks for the
smallest graph cut dividing a graph into two
equal-sized pieces -- is well-known to be NP-hard~\cite{GJS:76}.
Going back to the 1980s, there has been much study of
the average-case complexity of the min-bisection problem.
For instance, the min-bisection problem is much easier
if the minimum bisection is substantially smaller than
most other bisections. This has led to interest in
random graph models for which a typical sample has
exactly one good minimum bisection.
Perhaps the simplest such model is the ``planted bisection''
model, which is similar to the Erd\"os-Renyi model.
\begin{definition}[The planted bisection model]
  For $n \in \N$ and $p, q \in (0, 1)$, let $\curlyG(n, p, q)$
  denote the model of random, $\pm$-labelled graphs in which
  each vertex $u$ is assigned (independently and uniformly at random)
  a label $\sigma_u \in \{\pm\}$, and then each possible edge
  $(u, v)$ is included with probability $p$ if $\sigma_u = \sigma_v$
  and with probability $q$ if $\sigma_u \ne \sigma_v$.
\end{definition}
If $p = q$, the planted partition model is just an
Erd\"os-Renyi model, but if $p \gg q$ then a typical
graph will have two well-defined clusters. Actually,
the literature on the min-bisection problem usually
assumes that the two classes have exactly the same size
(instead of a random size), but
this modification makes almost no difference in the context of
this work.

The planted bisection model was not the earliest model to
be studied in the context of min-bisection -- Bui et
al.~\cite{BCLS:87} and Boppana~\cite{B:87} considered
graphs chosen
uniformly at random from all graphs with a given number
of edges and a small minimum
bisection.
Dyer and Frieze~\cite{DF:89} were the first to study
the min-bisection problem on the planted bisection model;
they showed that if $p > q$ are fixed as $n \to \infty$
then the minimum bisection is the one that separates
the two classes, and it can be found in expected
$O(n^3)$ time.

The result of Dyer and Frieze was improved by Jerrum
and Sorkin~\cite{JS:98}, who reduced the running
time to $O(n^{2 + \epsilon})$ and allowed $p - q$ to
shrink at the rate $n^{-1/6 + \epsilon}$.
More interesting than these improvements, however,
was the fact that Jerrum and Sorkin's analysis
applied to the popular and fast-in-practice
Metropolis algorithm.
Later, Condon and Karp~\cite{CK:01}
gave better theoretical guarantees with
a linear-time algorithm that works for
$p - q \ge \Omega(n^{-1/2 + \epsilon})$.

With the exception of Boppana's work (which was for a
different model), the aforementioned results applied
only
to relatively dense graphs. McSherry~\cite{M:01} showed
that a spectral clustering algorithm works as long
as $p - q \ge \Omega(\sqrt{q (\log n)/n})$.
In particular, his result is meaningful for graphs
whose average degree is as low as $O(\log n)$.
These are essentially the sparsest possible
graphs for which the minimum cut will agree
with the planted bisection, but Coja-Oghlan~\cite{CO:10}
managed to obtain a result for even sparser graphs
by studying a relaxed problem. Instead of trying
to recover the minimum bisection, he showed that
a spectral algorithm will find a bisection which
is positively correlated with the planted bisection.
His result applies as long as $p - q \ge \Omega(\sqrt{q/n})$,
and so it is applicable even to graphs with a constant
average degree.

\subsection{Block Models in Statistics} 
The statistical literature on clustering is more closely focused on real-world network data with 
the planted bisection model (or ``stochastic
blockmodel,'' as it is known in the statistics
community) used as an important test-case
for theoretical results.
Its study goes back to Holland et al.~\cite{HLL:83}, who
discussed parameter estimation and gave
a Bayesian method for finding
a good bisection, without theoretical
guarantees. Snijders and Nowicki~\cite{SN:97}
studied several different statistical methods
-- including
maximum likelihood estimation and the EM algorithm --
for the planted bisection model with $p - q = \Omega(1)$.
They then applied those methods to social networks data.
More recently, Bickel and Chen~\cite{BC:09} showed
that maximizing the Girvan-Newman modularity -- a
popular measure of cluster strength -- recovers the
correct bisection, for the same range of
parameters as the result of McSherry.
They also demonstrated that their methods perform
well on social and telephone network data.
Spectral clustering, the method studied by Boppana
and McSherry, has also appeared in the statistics literature:
Rohe et al.~\cite{RCY:11} gave a theoretical analysis of spectral
clustering under the planted bisection model and also
applied the method to data from Facebook.

\subsection{Sparse graphs and insights from statistical physics} 
The case of sparse graphs with constant average degree is well motivated from the perspective of real networks. Indeed, Leskovec et al.~\cite{LLDM:08}
collected and studied a vast collection of large network
datasets, ranging from social networks like LinkedIn
and MSN Messenger, to collaboration networks in movies
and on the arXiv, to biological networks in yeast.
Many of these networks had millions of nodes, but most
had an average degree of no more than 20; for instance,
the LinkedIn network they studied had approximately
seven million nodes, but only 30 million edges.
Similarly, the real-world networks considered by
Strogatz~\cite{Strogatz:01} -- which include coauthorship
networks, power transmission networks and web link networks
-- also had small average degrees.
Thus it is natural to consider the
planted partition model with parameters $p$ and $q$
of order $O(1/n)$.

Although sparse graphs are natural for modelling
many large networks, the planted partition model
seems to be most difficult to analyze in the sparse
setting. Despite the large amount of work studying
this model, the only results we know of that
apply in the sparse case $p, q = O(\frac{1}{n})$
are those of Coja-Oghlan.
Recently, Decelle et al.~\cite{Z:2011} made some fascinating
conjectures for the cluster identification problem
in the sparse planted partition model. In what follows,
we will set $p = a/n$ and $q = b/n$ for some fixed $a > b > 0$.

\begin{conjecture}\label{conj:reconstruction}
  If $(a - b)^2 > 2 (a + b)$ then the clustering problem
  in $\curlyG(n, \frac an, \frac bn)$
  is solvable as $n \to \infty$, in the sense that one can
  a.a.s.\ find a bisection
  which is positively correlated with the planted bisection.
\end{conjecture}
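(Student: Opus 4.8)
The plan is to exploit, in the ``positive'' direction, the same connection between clustering and the reconstruction problem on trees that underlies our impossibility result. Write $d = \tfrac12(a+b)$ for the asymptotic average degree and $\lam = \tfrac{a-b}{a+b}$ for the second eigenvalue of the binary symmetric channel with crossover probability $\tfrac{b}{a+b}$; then the hypothesis $(a-b)^2 > 2(a+b)$ is exactly the Kesten--Stigum bound $\lam^2 d > 1$ for broadcasting that channel on a Galton--Watson tree with $\Pois(d)$ offspring. The first step is to make the local picture rigorous: for $\ell \le (\tfrac12 - \eps)\log_d n$ the labelled $\ell$-ball around a typical vertex $v$ of $\curlyG(n,\tfrac an,\tfrac bn)$ can be coupled to the corresponding labelled Galton--Watson tree, with the coupling failing only with probability $n^{-\Omega(\eps)}$ since that ball is a tree of size $n^{o(1)}$.

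On the tree, $\lam^2 d > 1$ is precisely the regime in which the root spin can be recovered from the depth-$\ell$ spins by a \emph{linear} estimator: $Z_v := (\lam d)^{-\ell}\sum_{x}\sigma_x$, the sum running over vertices $x$ at depth $\ell$, satisfies $\E[Z_v \mid \sigma_v] = \sigma_v$ with variance bounded uniformly in $\ell$. The obstacle is that the algorithm never sees the spins $\sigma_x$. The standard substitute is to count non-backtracking walks in the graph itself: writing $N_\ell(u,v)$ for the number of length-$\ell$ non-backtracking walks from $u$ to $v$, a first-moment computation gives
\[
  \E\big[N_\ell(u,v)\,\big|\,\sigma\big] \;\approx\; \frac1n\Big(d^{\ell} + \sigma_u\sigma_v\,(\lam d)^{\ell}\Big),
\]
so the product $\sigma_u\sigma_v$ --- which is all a bisection can hope to recover, up to a global sign --- enters at order $(\lam d)^{\ell}/n$, sitting on top of a rank-one ``null'' term $d^{\ell}/n\cdot\mathbf 1\mathbf 1^{\mathsf T}$ that is essentially independent of the partition.

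The clustering algorithm is then spectral. Take $\ell \approx \tfrac12\log_d n$, form the centered matrix $M = \big(N_\ell(u,v)\big)_{u,v} - (\text{estimate of the null term})$ --- equivalently, work with an appropriate power of the non-backtracking (Hashimoto) edge matrix $B$ and project out the all-ones direction --- and output the bisection read off from the sign pattern of the leading eigenvector of $M$, possibly followed by one cleanup round in which each vertex is relabelled by applying a linear estimator to the current labels inside its $\ell$-ball. Correctness reduces to two spectral facts: the second eigenvalue of $B$ is $(\lam d)\big(1+o(1)\big)$ with eigenvector $\Omega(1)$-correlated with $\chi := (\sigma_v)_v$, while every other eigenvalue has modulus at most $\sqrt d\,(1+o(1))$. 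Because $\lam^2 d > 1$, the signal $(\lam d)^{\ell}$ then beats the bulk noise $d^{\ell/2}$, and a Davis--Kahan-type bound followed by the obvious rounding produces a bisection positively correlated with the planted one a.a.s.

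The hard part is exactly this spectral separation in the sparse regime. The adjacency matrix cannot be used directly: in $\curlyG(n,\tfrac an,\tfrac bn)$ its extreme eigenvalues are $\Theta\big(\sqrt{\log n/\log\log n}\big)$, driven by a few high-degree vertices, and they swamp the $O(1)$ signal $\lam d$. Passing to $B$ removes those spurious eigenvalues, but one must then prove a Friedman/Alon--Boppana-type bound $\rho_k(B) \le \sqrt d + o(1)$ for $k \ge 3$ on this random ensemble, and cope with the non-normality of $B$ (it is not symmetric and may be defective) --- the natural route being a delicate high-trace argument that counts non-backtracking walks of length $\Theta(\log n)$, treating ``tangle-free'' and ``tangled'' walks separately. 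Establishing this, together with the matching lower bound on the correlation of the second eigenvector with $\chi$, is the crux; the local tree coupling, the first-moment identity, and the rounding step are by comparison routine bookkeeping. (An alternative is to first extract a weak seed from a coarser statistic and then amplify it by belief propagation along the local trees, but pushing the seed all the way down to the Kesten--Stigum threshold --- rather than to Coja-Oghlan's $C(a+b)$ --- seems to require essentially the same walk-counting input.)
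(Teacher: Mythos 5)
The statement you were asked to prove is, in this paper, a \emph{conjecture}, not a theorem: the paper explicitly leaves it open (``Of the conjectures that we mentioned in the introduction, Conjecture~\ref{conj:reconstruction} remains open''). The paper proves only the converse direction (non-reconstruction below the threshold, Theorem~\ref{thm:non-reconstruction}) and the distinguishability/estimation results (Theorems~\ref{thm:distinguish} and~\ref{thm:cycles}); it offers no proof of the positive clustering claim, only a related partial heuristic for a regular-graph variant in Section~\ref{sec:regular}.

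Your proposal correctly identifies the right strategy --- the local coupling to the Galton--Watson broadcast process, the linear statistic $(\lam d)^{-\ell}\sum_x \sigma_x$ in the Kesten--Stigum regime, the passage from unobservable spins to non-backtracking walk counts, and a spectral algorithm on the Hashimoto matrix $B$ with a cleanup/amplification round --- and this is indeed the route that later work of Massouli\'e and of Bordenave, Lelarge and Massouli\'e (and, via a different non-spectral route, Mossel, Neeman and Sly) took to resolve the conjecture. But as a proof it has a genuine gap, and you name it yourself: the Friedman/Alon--Boppana-type bound $\rho_k(B) \le \sqrt d + o(1)$ for $k \ge 3$, together with the $\Omega(1)$ correlation of the second eigenvector of $B$ with the planted labels. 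That spectral separation for a non-normal, possibly defective matrix in the sparse regime is the entire content of the theorem; without it the Davis--Kahan step has no input, and the first-moment identity $\E[N_\ell(u,v)\mid\sigma]\approx n^{-1}(d^\ell+\sigma_u\sigma_v(\lam d)^\ell)$ by itself says nothing about fluctuations or about eigenvector alignment. So what you have written is a correct and well-informed roadmap (matching the eventual resolution of the conjecture), not a proof, and it would be misleading to present it as one; in particular it cannot be compared against a proof in this paper, because the paper contains none.
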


To put Coja-Oghlan's work into the context of this conjecture,
he showed that if $(a - b)^2 > C (a + b)$ for a large enough
constant $C$, then the spectral method solves the clustering problem.
Decelle et al.'s work is based on deep but non-rigorous ideas
from statistical physics. In order to identify the best bisection,
they use the sum-product algorithm (also known as belief propagation).
Using the cavity method, they argue that the algorithm should work,
a claim that is bolstered by compelling simulation results.

What makes Conjecture~\ref{conj:reconstruction} even more interesting
is the fact that it might represent a threshold for the solvability
of the clustering problem.
\begin{conjecture}\label{conj:non-reconstruction}
  If $(a - b)^2 < 2(a+b)$ then the clustering
  in $\curlyG(n, \frac an, \frac bn)$
  problem is not solvable as $n \to \infty$.
\end{conjecture}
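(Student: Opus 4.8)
\medskip
\noindent\emph{Proof plan.}
The strategy is to show that, below the conjectured threshold, the graph carries asymptotically no information about the planted bisection beyond the trivial global sign, so that no estimator can beat a random guess. Write $\rho_{uv} := \E[\sigma_u\sigma_v \mid G]$ for the Bayes-optimal two-point function and $\langle \hat\sigma,\sigma\rangle := \sum_u \hat\sigma_u\sigma_u$ for the agreement of an estimator $\hat\sigma = \hat\sigma(G)\in\{\pm\}^n$ with the truth. Since $\curlyG(n,\tfrac an,\tfrac bn)$ is invariant under relabelling the vertices, $\E[\rho_{uv}^2]$ is the same for every pair of distinct vertices, and the tower property together with $|\hat\sigma_u|\le 1$ and $\rho_{uu}=1$ gives
\begin{equation*}
  \E\big[\langle\hat\sigma,\sigma\rangle^2\big]
  = \E\Big[\textstyle\sum_{u,v}\hat\sigma_u\hat\sigma_v\,\rho_{uv}\Big]
  \le n + n^2\big(\E[\rho_{12}^2]\big)^{1/2}.
\end{equation*}
Hence it suffices to prove that $\E[\rho_{12}^2]\to 0$ whenever $(a-b)^2<2(a+b)$: this forces $\tfrac1n\langle\hat\sigma,\sigma\rangle\to 0$ in $L^2$, hence in probability, uniformly over all $\hat\sigma$, which is precisely non-solvability.

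To control $\E[\rho_{12}^2]$ I would reveal extra information. Fix $R\in\N$ and let $B_R(1)$ be the ball of radius $R$ around vertex $1$ in $G$; a.a.s.\ it has $O(1)$ vertices and does not contain vertex $2$. Revealing $\sigma_w$ for all $w\notin B_R(1)$ can only enlarge the conditional variance of $\sigma_1\sigma_2$, so Jensen's inequality (applied conditionally on $G$) gives
\begin{equation*}
  \E[\rho_{12}^2] \;\le\; \E\Big[\big(\E\big[\sigma_1 \,\big|\, G,\ (\sigma_w)_{w\notin B_R(1)}\big]\big)^2\Big] + o(1).
\end{equation*}
The inner conditional law is \emph{local}: given $G$ and the labels outside $B_R(1)$, the labels inside form an Ising model on the induced subgraph $G[B_R(1)]$ with a field coming from the edges and non-edges leaving the ball, and as $n\to\infty$ with $R$ fixed this converges to an explicit tree model. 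Here one uses that (i) $G[B_R(1)]$ is a.a.s.\ a tree, distributed as the first $R$ generations of a Galton--Watson tree with $\Pois(\tfrac{a+b}{2})$ offspring; (ii) each non-edge contributes an Ising coupling of size $O(1/n)$, and since the planted configuration is nearly balanced ($|\sum_w\sigma_w| = O(\sqrt n)$ a.a.s.) the $\Theta(n)$ non-edges leaving the ball produce a total field of size $O(1/\sqrt n)=o(1)$ on every vertex of $B_R(1)$; and (iii) the $O(1)$ genuine edges leaving the ball land on vertices at distance $R+1$ whose revealed labels are exactly the broadcast of $\sigma_1$ down the tree. Consequently $\E[\sigma_1 \mid G,(\sigma_w)_{w\notin B_R(1)}]$ converges in law to $g_R := \E_T[\sigma_\varnothing \mid \text{labels at level } R+1]$ --- the posterior mean of the root of the broadcast process on the Galton--Watson tree $T$ (equivalently the root magnetisation of the free Ising model on $T$ with coupling $\beta = \tfrac12\log(a/b)$ and level-$(R+1)$ boundary drawn from the broadcast) --- and, being bounded, $\E[(\cdot)^2]\to\E[g_R^2]$.

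The single-edge channel has second eigenvalue $\theta = \tanh\beta = \tfrac{a-b}{a+b}$ and the offspring mean is $d = \tfrac{a+b}{2}$, so
\begin{equation*}
  d\,\theta^2 = \frac{(a-b)^2}{2(a+b)} < 1,
\end{equation*}
i.e.\ we sit strictly below the Kesten--Stigum bound. I would then invoke the non-reconstruction theorem for the symmetric binary (Ising) broadcast channel --- the Evans--Kenyon--Peres--Schulman theorem, in the form valid for Galton--Watson trees with $\Pois(d)$ offspring --- which asserts that below Kesten--Stigum the root is asymptotically independent of the deep levels, so $\E[g_R^2]\to 0$ as $R\to\infty$. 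Chaining the displays: for every fixed $R$ one gets $\limsup_{n}\E[\rho_{12}^2]\le\E[g_R^2]$; letting $R\to\infty$ yields $\E[\rho_{12}^2]\to 0$, which finishes the proof.

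The main obstacle is the local-convergence claim of the second paragraph, and especially the control of the diffuse weak field generated by the $\Theta(n)$ non-edges leaving $B_R(1)$: one must verify that this field really is $o(1)$ and does not secretly interact with the geometry, which is where the near-balance of the planted partition and uniform moment bounds on ball sizes come in, and one must make the convergence quantitative enough to send $n\to\infty$ before $R\to\infty$. A secondary point is that the non-reconstruction input is classical for $d$-regular trees, so reaching the sharp constant $2(a+b)$ requires extending it to the Poisson offspring distribution --- either by re-running the Evans--Kenyon--Peres--Schulman recursion in the Galton--Watson setting or by quoting such an extension.
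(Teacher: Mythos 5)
Your proposal follows essentially the same route as the paper: reduce to the broadcast process on the Poisson Galton--Watson tree, invoke Evans--Kenyon--Peres--Schulman non-reconstruction below the Kesten--Stigum bound, and handle the fact that $\sigma\mid G$ is not a Markov random field by exploiting the near-balance of $\sigma$ to show that the aggregate anti-ferromagnetic field coming from the $\Theta(n)$ non-edges leaving the ball is $O(n^{-1/2})=o(1)$. The paper packages the last step slightly differently (it conditions only on $\sigma_{\partial G_R}$ and separately proves an asymptotic conditional-independence lemma across the boundary, rather than revealing all labels outside $B_R(1)$ and treating them as frozen boundary spins of a local Gibbs model), and it grows $R$ as a slow function of $n$ so that a single coupling handles the tree approximation and the depth limit simultaneously, but the ideas and the use of near-balance to kill the weak long-range interaction are the same. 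You also correctly identify the key technical obstacles; your opening Cauchy--Schwarz reduction from overlap $\langle\hat\sigma,\sigma\rangle$ to the two-point function $\rho_{12}$ is a clean formalization of ``not solvable'' that the paper implicitly uses via its conditional-variance statement.
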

This second conjecture is based on a connection with the
tree reconstruction problem (see~\cite{M:survey} for a survey).
Consider a multi-type branching process where there are two types of particles named $+$
and $-$.
Each particle gives birth to $\Pois(a)$ (ie.\ a Poisson distribution with mean $a$)
particles of the same type and $\Pois(b)$
particles of the complementary type. In the tree reconstruction problem, the goal is to recover the label of the root of the tree
from the labels of level $r$ where $r \to \infty$.
This problem goes back to Kesten and Stigum~\cite{KS} in the 1960s, who
showed that if $(a-b)^2 > 2(a+b)$ then
it is possible to recover the root value with
non-trivial probability. The converse was not resolved until 2000,
when Evans, Kenyon, Peres and Schulman~\cite{EKPS} proved
that if  $(a-b)^2 \leq 2(a+b)$ then it is impossible to recover the root with
probability bounded above $1/2$ independent of $r$.  This is equivalent to the reconstruction or extremality threshold for the Ising model on a branching process.

At the intuitive level the connection between clustering and tree reconstruction, follows from the fact 
that the neighborhood of a vertex in $\curlyG(n, \frac{a}{n}, \frac{b}{n})$
should look like a random labelled tree with high probability. Moreover, the distribution
of that labelled tree should converge as $n \to \infty$ to the
multi-type branching process defined above. We will make this connection formal later.  

Decelle et al.\ also made a conjecture related to the 
the parameter estimation problem that was previously studied extensively in the statistics literature. 
Here the problem is to identify the parameters 
$a$ and $b$. Again, they provided an algorithm based belief propagation and they used physical ideas to argue that
there is a threshold above which the parameters can be estimated,
and below which they cannot.
\begin{conjecture}\label{conj:estimation}
  If $(a - b)^2 > 2 (a + b)$ then there is a consistent estimator
  for $a$ and $b$ under $\curlyG(n, \frac an, \frac bn)$. Conversely,
  if $(a - b)^2 < 2(a+b)$ then there is no consistent estimator.
\end{conjecture}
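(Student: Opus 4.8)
The plan is to prove Conjecture~\ref{conj:estimation} by tightly coupling the two halves of the statement to the reconstruction/non-reconstruction dichotomy for the two-type Poisson branching process. For the positive direction, assume $(a-b)^2 > 2(a+b)$. First I would observe that $a+b$ is always easy to estimate consistently: the total number of edges $|E|$ has expectation $\frac{n}{2}\cdot\frac{a+b}{2}\cdot(1+o(1))$ (each vertex has expected degree $\frac{a+b}{2}$), so $\widehat{a+b} := 4|E|/n$ is consistent by a routine second-moment/concentration argument. The remaining task is to estimate $a-b$, equivalently to distinguish the planted model from the Erd\H{o}s--R\'enyi model with the same average degree. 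Here I would count short cycles, or more robustly small trees, in the graph: for a fixed $k$, the number of $k$-cycles $C_k$ in $\curlyG(n,\frac an,\frac bn)$ converges (this is the standard local-weak-convergence / method-of-moments picture for sparse random graphs) to a Poisson random variable whose mean depends on $a,b$ through $\frac12\left(\frac{a+b}{2}\right)^k + \frac12\left(\frac{a-b}{2}\right)^k$ --- the trace of the $k$-th power of the $2\times 2$ ``expected adjacency'' block matrix $\frac1n\begin{pmatrix} a & b \\ b & a\end{pmatrix}$, suitably normalized. Since we already know $\frac{a+b}{2}$, the limiting distribution of $C_k$ for a single well-chosen $k$ (or a vector of them) determines $\left|\frac{a-b}{2}\right|$, and hence $a-b$, consistently. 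The sign of $a-b$ is not identifiable (swapping labels swaps it), which is why we estimate $|a-b|$; this matches the statement. The main care here is that the cycle counts have bounded variance and the Poisson limits are non-degenerate precisely in the relevant regime, which follows from standard computations on sparse inhomogeneous random graphs; I would cite or reproduce the local-convergence lemma that the excerpt promises to establish anyway (``we will make this connection formal later'').

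For the negative direction, assume $(a-b)^2 < 2(a+b)$. The strategy is a mutual-contiguity / indistinguishability argument: I claim that for every pair $(a,b)$ and $(a',b')$ with $a+b = a'+b'$ and both lying strictly below the Kesten--Stigum threshold, the laws of the \emph{unlabelled} graphs $\curlyG(n,\frac an,\frac bn)$ and $\curlyG(n,\frac{a'}{n},\frac{b'}{n})$ are mutually contiguous (or at least have total variation distance bounded away from $1$). If so, no test can distinguish them, and in particular no consistent estimator of $a-b$ can exist (take $a' - b' \ne a-b$); combined with the fact that $a+b$ ranges over the whole non-reconstruction region, this gives non-estimability of the pair $(a,b)$. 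To prove contiguity I would use the second-moment method on the likelihood ratio: compare both models to Erd\H{o}s--R\'enyi $\curlyG(n,\frac{c}{n})$ with $c = \frac{a+b}{2}$, write the likelihood ratio $L_n = d\curlyG(n,\frac an,\frac bn)/d\curlyG(n,\frac cn)$, and show $\E[L_n^2]$ stays bounded as $n\to\infty$. The second moment of $L_n$ expands, after averaging over two independent copies of the planted labelling, into a sum over the ``overlap'' of the two labellings, and this sum is dominated by a partition function that is exactly the free energy of the Ising model on the sparse random graph --- equivalently, on the Poisson two-type tree --- which stays $O(1)$ precisely in the regime $(a-b)^2 \le 2(a+b)$ (the Kesten--Stigum / EKPS bound is what guarantees the relevant series converges; here is where \cite{EKPS} and the reconstruction connection enter).

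The hardest step is establishing the boundedness of $\E[L_n^2]$ all the way up to the conjectured threshold: the naive subgraph-count expansion of the second moment produces, for each $k$, a term of size roughly $\frac{1}{2k}\left(\frac{a-b}{2c}\right)^{2k}\cdot(\text{number of }k\text{-cycles})$, and summing over $k$ one gets a $\sum_k \frac{1}{2k}\left(\frac{(a-b)^2}{2(a+b)}\right)^{k}$-type series which converges iff $(a-b)^2 < 2(a+b)$ --- exactly matching the threshold, but the convergence is delicate at the boundary and one must control error terms (longer walks that are not cycles, self-intersecting walks) uniformly in $n$. I expect to handle this by a careful truncation: show the contribution of cycles of length $> K$ to $\E[L_n^2]$ is small uniformly in $n$ (using the series tail), and for cycles of length $\le K$ use the Poisson convergence of cycle counts to pass to the limit; then let $K\to\infty$. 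A cleaner alternative, which I would pursue in parallel, is to bypass the second moment and argue directly via \emph{small subgraph conditioning} (Robinson--Wormald / Janson): condition on the joint cycle counts, show that conditionally the likelihood ratio concentrates, and read off contiguity; this route makes the role of the Poisson cycle-count limits --- and hence of the Kesten--Stigum threshold --- completely transparent, and it simultaneously yields the positive direction's estimator as a byproduct, since the cycle counts are exactly the statistics that carry all the information about $a-b$ below the threshold and which must therefore become uninformative about $a-b$ in the limit unless $(a-b)^2 > 2(a+b)$. Either way, the conceptual crux is the identification of the second-moment series with the partition function of the Ising model on the Poisson tree and the invocation of the reconstruction threshold to control it.
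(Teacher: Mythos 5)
Your overall plan matches the paper's: estimate $a+b$ from the edge count, estimate $|a-b|$ from short-cycle counts, and prove non-estimability below threshold via mutual contiguity with the Erd\H{o}s--R\'enyi model $\curlyG(n,\frac{a+b}{2n})$, established by a second-moment computation on the labelled likelihood ratio together with Robinson--Wormald small subgraph conditioning. Your identification of the crucial series $\sum_k \frac{1}{2k}\bigl(\frac{(a-b)^2}{2(a+b)}\bigr)^k$ is exactly the $\sum_k \lambda_k\delta_k^2$ that appears in condition (d) of the conditioning theorem, and your instinct that small subgraph conditioning is cleaner than a bare truncation argument is precisely what the paper does.

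There is, however, a genuine gap in your positive direction. You write that ``the limiting distribution of $C_k$ for a single well-chosen $k$ \ldots determines $|(a-b)/2|$ \ldots consistently,'' but for \emph{fixed} $k$ the cycle count $X_k$ converges in distribution to a non-degenerate Poisson; observing one Poisson draw does not consistently determine its mean. Nor is it true that ``the cycle counts have bounded variance'': $\Var X_k$ grows like $\frac{1}{2k}\bigl(\frac{a+b}{2}\bigr)^k$. The fix --- and this is exactly where the hypothesis $(a-b)^2>2(a+b)$ enters the \emph{positive} direction --- is to take $k=k(n)\to\infty$ slowly (the paper uses $k=o(\log^{1/4}n)$, so that the Poisson approximation still holds). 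Then the ``signal'' in $\E X_k$ coming from the planted structure is of order $\frac{1}{2k}\bigl(\frac{a-b}{2}\bigr)^k$, while the noise is of order $\sqrt{\Var X_k}\sim\frac{1}{\sqrt{2k}}\bigl(\frac{a+b}{2}\bigr)^{k/2}$; the signal dominates the noise as $k\to\infty$ if and only if $\bigl(\frac{a-b}{2}\bigr)^2>\frac{a+b}{2}$, i.e.\ $(a-b)^2>2(a+b)$. Without the $k\to\infty$ step your estimator is not consistent, and without noticing that the signal-vs-noise comparison is governed by $(a-b)^2$ versus $2(a+b)$ you have not explained why the threshold appears on this side of the conjecture. (Also a small point: counting small \emph{trees} will not help --- subtree counts see only the average degree $\frac{a+b}{2}$; cycles are essential because their parity structure is what produces the $(a-b)^k$ term.)
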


\section{Our results}

Our main contribution is to establish
Conjectures~\ref{conj:non-reconstruction}
and~\ref{conj:estimation}.

\begin{theorem}\label{thm:non-reconstruction}
If $a + b > 2$
and $(a - b)^2 \le 2(a+b)$ then, for any fixed vertices $u$ and $v$,
\[\P_n(\sigma_u = + \mid G, \sigma_v = +) \to \frac{1}{2} \text{ a.a.s.}\]
\end{theorem}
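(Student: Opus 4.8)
The statement is equivalent, by the global spin–flip symmetry of $\curlyG(n,\tfrac an,\tfrac bn)$ (which fixes $G$ and sends $\sigma\mapsto-\sigma$), to the claim that $\E_n[\sigma_u \mid G,\sigma_v=+]\to 0$ a.a.s.; indeed one checks $\P_n(\sigma_u=+\mid G,\sigma_v=+)-\tfrac12=\tfrac12\,\E_n[\sigma_u\mid G,\sigma_v=+]$, using $\P_n(\sigma_v=+\mid G)=\E_n[\sigma_u\mid G]=0$ by symmetry. Since this quantity is bounded, a.a.s.\ convergence is the same as convergence in $L^1$, so it is enough to show $\E_n\bigl|\E_n[\sigma_u\mid G,\sigma_v=+]\bigr|\to 0$, where the outer $\E_n$ is the annealed average over $(G,\sigma)$. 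The plan is to fix a large radius $R$, let $n\to\infty$ first, and only afterwards let $R\to\infty$: the point is that the only information about $\sigma_u$ contained in $(G,\sigma_v)$ reaches $u$ through the labels on the sphere of radius $R$ about $u$, on which — below the reconstruction threshold — it becomes asymptotically uninformative.

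I would first record the local geometry. For fixed $R$, with probability $\to1$: the ball $B_R(u)$ is a tree, $d_G(u,v)>2R$ (so $B_R(u)$ and $B_R(v)$ are disjoint and non-adjacent, and in particular $v$ lies outside $B_R(u)$), and $|B_R(u)|=O(1)$. Moreover, the labelled rooted graph $(B_R(u),\sigma|_{B_R(u)})$ converges in distribution to the depth-$R$ truncation of the multi-type $\Pois$ branching process of the Introduction, rooted at $u$ with the $\pm$ broadcast labels; this is the usual exploration-plus-Poissonization argument for sparse random graphs, and it is also what makes rigorous the heuristic ``the neighbourhood of $u$ looks like that tree.''

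The heart of the matter is an information-bottleneck estimate. Write $S=\partial B_R(u)$ for the set of vertices at distance exactly $R$ from $u$, and let $\mu_{B_R(u)}(\sigma_S)$ denote the posterior mean of the root label in the $\pm$ broadcast process on the tree $B_R(u)$ when the leaf labels $\sigma_S$ are observed — a deterministic function of the pair (rooted tree $B_R(u)$, labelling $\sigma_S$). I claim that, for each fixed $R$,
\[
\E_n[\sigma_u\mid G,\sigma_v,\sigma_S]=\mu_{B_R(u)}(\sigma_S)+o_n(1),
\]
with the error tending to $0$ in $\P_n$-probability. One expands the posterior of $\sigma$ given $G$ as a product over all pairs of vertices of the weights $p,q$ (on edges) and $1-p,1-q$ (on non-edges); fixing $\sigma_S$, the dependence on $\sigma_u$ together with the interior labels factors through (i) the edges and non-edges inside $B_R(u)$, of which there are $O(1)$, the non-edge factors being $1+O(1/n)$ each; (ii) edges leaving $S$, which do not involve $\sigma_u$ once $\sigma_S$ is fixed, because $B_R(u)$ is a tree cut at $S$; and (iii) the non-edges between the interior of $B_R(u)$ and the rest of the graph, which is the delicate term. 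Each non-edge in (iii) contributes the factor $\tfrac{1-p}{1-q}=1-\tfrac{a-b}{n}+O(n^{-2})$ raised to $\pm1$ according to the label of the outside endpoint, so over the $O(n)$ outside vertices the total distortion of the $\sigma_u$-likelihood ratio is $\bigl(\tfrac{1-p}{1-q}\bigr)^{\pm(\#\{\sigma_w=+\}-\#\{\sigma_w=-\})}$, and since $\#\{\sigma_w=+\}-\#\{\sigma_w=-\}=O_P(\sqrt n)$ by the central limit theorem this is $1+O_P(n^{-1/2})\to1$. (One must also note that conditioning on all of $G$, rather than just on $B_R(u)$, and on $\sigma_v$, reveals nothing further about $\sigma_u$ given $\sigma_S$: $v$ and everything else sit on the far side of the cutset $S$, and the same negligibility of non-edge factors applies.) Thus, up to $1+o_n(1)$, the posterior is the tree broadcast posterior on $B_R(u)$ with leaves $S$ observed.

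Granting this, the tower property gives
\[
\E_n\bigl|\E_n[\sigma_u\mid G,\sigma_v=+]\bigr|\le \E_n\bigl|\mu_{B_R(u)}(\sigma_S)\bigr|+o_n(1),
\]
and by the local convergence of the second paragraph (note $\mu$ is a bounded continuous functional of a finite labelled tree) the right-hand side converges as $n\to\infty$ to $\E\bigl|\mu_{T_R}(\sigma_{L_R})\bigr|$, the analogous quantity for the depth-$R$ branching-process tree $T_R$ with leaf set $L_R$. The hypothesis $(a-b)^2\le 2(a+b)$ is exactly the non-reconstruction condition, so by the theorem of Evans, Kenyon, Peres and Schulman this last quantity tends to $0$ as $R\to\infty$; letting $R\to\infty$ completes the proof. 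The role of $a+b>2$ is that it makes the average degree $(a+b)/2$ exceed $1$, so $u$ (and $v$) lies in a giant component with positive probability and the local limit is the \emph{supercritical} tree to which the Kesten–Stigum/EKPS threshold applies; when $a+b\le2$ the components of $u$ and $v$ have size $O(\log n)$ and a separate, easier argument is needed. The main obstacle is the bottleneck estimate of the third paragraph: quantifying that the $\Theta(n)$ weak non-edge interactions between $B_R(u)$ and the rest of the graph genuinely wash out — and doing so with enough uniformity that the error survives sending $R\to\infty$ after $n\to\infty$.
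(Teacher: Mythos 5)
Your proposal follows essentially the same route as the paper: reduce to a statement about the conditional posterior, condition on the labels of the sphere $\partial B_R(u)$ to block information flow, show that the $\Theta(n)$ weak non-edge interactions between the interior of $B_R(u)$ and the rest of the graph wash out (this is the paper's Lemma~\ref{lem:independence}, proved by showing the cross term $Q_{A,C}$ is asymptotically constant over balanced labellings), couple $B_R(u)$ with the Poisson broadcast tree (the paper's Proposition~\ref{prop:coupling}), and invoke the Evans--Kenyon--Peres--Schulman non-reconstruction theorem. The differences in packaging --- conditional mean plus the tower property versus monotonicity of conditional variance, local weak convergence versus an explicit coupling, and $n\to\infty$ then $R\to\infty$ versus $R=R(n)\to\infty$ --- are cosmetic, and the step you flag as the main obstacle is exactly the content of the paper's Lemma~\ref{lem:independence}.
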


\begin{remark}
Theorem~\ref{thm:non-reconstruction} is stronger than
Conjecture~\ref{conj:non-reconstruction} because
it says that an even easier problem cannot be solved:
if we take two random vertices of $G$, Theorem~\ref{thm:non-reconstruction}
says that no algorithm can tell whether or
not they have the same label. This is an easier task than finding
a bisection, because finding a bisection is equivalent
to labeling \emph{all} the vertices; we
only asking whether two of them have the same label or not.
Theorem~\ref{thm:non-reconstruction} is also stronger than the
conjecture because it includes the case $(a - b)^2 = 2(a+b)$,
for which Decelle et al.\ did not conjecture any particular
behavior.
\end{remark}
\begin{remark}
Note that the assumption $a + b > 2$ is there to ensure that
$G$ has a giant component, without which the clustering problem
is clearly not solvable.
\end{remark}

To prove Conjecture~\ref{conj:estimation}, we compare
the planted partition model to an appropriate Erd\"os-Renyi
model:
let $\P_n = \curlyG(n, \frac an, \frac bn)$
and take $\P'_n = \curlyG(n, \frac{a+b}{2n})$ to be the Erd\"os-Renyi
model that has the same average degree as $\P_n$.
\begin{theorem}~\label{thm:distinguish}
If $(a - b)^2 < 2(a + b)$ then $\P_n$ and $\P'_n$ are mutually
contiguous i.e., for a sequence of events $A_n$,
$\P_n(A_n) \to 0$ if, and only if, $\P'_n(A_n) \to 0$.

Moreover, if $(a - b)^2 < 2(a + b)$
then there is no consistent estimator for $a$ and $b$. 
\end{theorem}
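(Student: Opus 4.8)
The strategy is to study the likelihood ratio $L_n := \frac{d\P_n}{d\P'_n}$ as a function of the (unlabelled) graph $G$, and to establish three facts: (i) $\E'_n[L_n^2]$ stays bounded precisely when $(a-b)^2 < 2(a+b)$; (ii) under $\P'_n$, $L_n$ converges in distribution to an a.s.-positive, a.s.-finite random variable $L_\infty$; (iii) together these give mutual contiguity; and then the estimation statement is an immediate corollary. Write $c = \tfrac{a+b}{2}$ and $d = \tfrac{a-b}{2}$, so that $\P'_n$ is $\curlyG(n,c/n)$ (equivalently, the planted partition model with $a=b=c$), and, conditioning on the planted labelling $\sigma$,
\[
  L_n(G) \;=\; \E_\sigma \Bigl[\ \prod_{(u,v)\in E(G)} \frac{c + d\,\sigma_u\sigma_v}{c}\ \prod_{(u,v)\notin E(G)} \frac{n - c - d\,\sigma_u\sigma_v}{n-c}\ \Bigr],
\]
with $\E_\sigma$ an average over a uniform $\sigma \in \{\pm\}^n$.

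First I would compute $\E'_n[L_n^2]$. Expanding the square introduces a second, independent planted labelling $\tau$; since the edges are independent under $\P'_n$, the expectation factorises over pairs $\{u,v\}$, and a short calculation shows that the first-order terms in $1/n$ cancel exactly, leaving the per-pair factor equal to $1 + \frac{d^2\,\sigma_u\sigma_v\tau_u\tau_v}{c(n-c)}$. Hence, writing $\eta_u := \sigma_u\tau_u$,
\[
  \E'_n[L_n^2] \;=\; \E_\eta \prod_{u<v}\Bigl(1 + \tfrac{d^2}{c(n-c)}\,\eta_u\eta_v\Bigr),
\]
with $\eta$ uniform on $\{\pm\}^n$. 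Expanding the product over subsets $S$ of the edge set of $K_n$ and averaging over $\eta$, only \emph{even} subgraphs $S$ survive, and among those only vertex-disjoint unions of cycles contribute in the limit (a $j$-cycle carries a factor $\sim \frac{n^j}{2j}\bigl(\frac{d^2}{cn}\bigr)^j \to \frac{1}{2j}(d^2/c)^j$, while any connected even graph that is not a cycle has strictly more edges than vertices and is negligible). The exponential formula then yields
\[
  \E'_n[L_n^2] \;\longrightarrow\; \exp\!\Bigl(\sum_{j\ge 3} \tfrac{1}{2j}\,(d^2/c)^j\Bigr) \;=\; \bigl(1 - d^2/c\bigr)^{-1/2}\,e^{-d^2/(2c)}\,e^{-d^4/(4c^2)},
\]
which is finite exactly when $d^2 < c$, i.e.\ $(a-b)^2 < 2(a+b)$; when $d^2 \ge c$ the series diverges and $\E'_n[L_n^2] \to \infty$, matching the conjectured threshold.

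Next I would identify the distributional limit of $L_n$ by the small subgraph conditioning method. Let $Y_j^{(n)}$ be the number of $j$-cycles of $G$. Classically, under $\P'_n = \curlyG(n,c/n)$ the vector $(Y_3^{(n)}, Y_4^{(n)}, \dots)$ converges to independent Poissons with means $\lambda_j = c^j/(2j)$; a parallel moment computation — of $\E'_n\bigl[L_n \prod_j (Y_j^{(n)})_{m_j}\bigr]$ with falling factorials — shows that under $\P_n$ these counts converge to independent $\Pois(\lambda_j(1+\delta_j))$ with $\delta_j = (d/c)^j$, since a $j$-cycle occurs with an extra averaged weight $1 + (d/c)^j$. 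Under our hypothesis $\sum_j \lambda_j\delta_j^2 = \sum_{j\ge3}\tfrac{1}{2j}(d^2/c)^j < \infty$, and this matches $\lim \E'_n[L_n^2] = \exp(\sum_j \lambda_j\delta_j^2)$ from the previous step, so the small subgraph conditioning theorem applies: it gives both that
\[
  L_n \ \toD\ L_\infty := \prod_{j\ge 3} e^{-\lambda_j\delta_j}(1+\delta_j)^{Y_j}, \qquad Y_j \sim \Pois(\lambda_j)\ \text{independent}
\]
(the infinite product converging a.s.\ and being a.s.\ strictly positive and finite, again because $\sum_j \lambda_j\delta_j^2 < \infty$), and that $\P_n$ and $\P'_n$ are mutually contiguous. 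Alternatively one can finish directly: the $L^2$-bound makes $\{L_n\}$ uniformly integrable under $\P'_n$, whence $\P_n \triangleleft \P'_n$ by Le Cam's first lemma; applying that lemma to $1/L_n$ under $\P_n$ — using uniform integrability of $\{L_n\}$ together with $L_\infty > 0$ a.s., so that $1/L_n$ converges in law under $\P_n$ to a variable of mean $\E[L_\infty \cdot L_\infty^{-1}] = \P(L_\infty > 0) = 1$ — gives $\P'_n \triangleleft \P_n$.

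Finally, the estimation statement follows at once. Since $\P'_n$ is the planted partition model with parameters $(\tfrac{a+b}{2},\tfrac{a+b}{2}) \neq (a,b)$, any consistent estimator $(\hat a_n, \hat b_n)$ would have $\hat a_n \to a$ in $\P_n$-probability and $\hat a_n \to \tfrac{a+b}{2}$ in $\P'_n$-probability; as $\tfrac{a+b}{2} < \tfrac{3a+b}{4} < a$, the event $A_n = \{\hat a_n \le \tfrac{3a+b}{4}\}$ satisfies $\P_n(A_n) \to 0$ while $\P'_n(A_n) \to 1$, contradicting $\P'_n \triangleleft \P_n$. I expect the main obstacle to be the error control in the second-moment step — rigorously bounding the contribution of even subgraphs that are not disjoint unions of cycles, and justifying the passage to the limit inside the expectation (equivalently, a uniform-in-$n$ large-deviation estimate on $\sum_u \eta_u$) — together with the bookkeeping needed to verify the tilted joint Poisson convergence of the cycle counts feeding the small subgraph conditioning theorem. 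Once these are in place, the conceptual core is just the exact per-pair identity and the emergence of the threshold $d^2 < c$ as the radius of convergence of $\sum_{j}\tfrac{1}{2j}(d^2/c)^j$.
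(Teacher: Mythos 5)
Your proposal is correct and takes essentially the same route as the paper: the paper also studies the likelihood ratio $\frac{\P_n(G)}{\P'_n(G)}$ (which they write as $Y_n = 2^{-n}Z_n(G)/\P'_n(G)$ after enriching $\P'_n$ with labels, so that $Y_n$ coincides with your $L_n$), computes $\E_{\P'_n}Y_n^2 \to e^{-t/2 - t^2/4}(1-t)^{-1/2}$ with $t = (a-b)^2/(2(a+b)) = d^2/c$, verifies the same $\lambda_k = \frac{1}{2k}(\frac{a+b}{2})^k$ and $\delta_k = (\frac{a-b}{a+b})^k$ in the small subgraph conditioning theorem, and derives the estimation impossibility from contiguity. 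The one genuine variation is in how the second moment is evaluated: you propose a combinatorial expansion of $\prod_{u<v}(1+\epsilon_n\eta_u\eta_v)$ over even subgraphs of $K_n$, which requires controlling the contribution of connected even subgraphs that are not cycles (more edges than vertices, hence negligible, but there are many isomorphism types), whereas the paper sidesteps this entirely by writing $s_\pm$ in terms of the overlap $\rho = n^{-1}\sum_u\sigma_u\tau_u$ and reducing to $\E\exp(tZ_n^2/2) \to (1-t)^{-1/2}$ via the CLT and a uniform-integrability bound from Hoeffding — exactly the ``uniform-in-$n$ large-deviation estimate on $\sum_u\eta_u$'' you flag as the alternative; the latter route is cleaner to make rigorous. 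One small additional remark: your estimation argument evaluates the hypothetical estimator's consistency at the boundary point $a=b$ (since $\P'_n$ is the planted model with equal parameters), whereas the paper avoids this by routing contiguity through a third model $\curlyG(n,\alpha/n,\beta/n)$ with $\alpha>\beta$ and $\alpha+\beta=a+b$; both work, but the paper's version stays inside the open parameter region $a>b$.
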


Note that the second part of the Theorem~\ref{thm:distinguish} follows
from the first part,
since it implies that
$\curlyG(n, \frac an, \frac bn)$ and $\curlyG(n, \frac{\alpha}{n},
\frac{\beta}{n})$ are contiguous as long as
$a + b = \alpha + \beta > \frac{1}{2}\max\{(a-b)^2, (\alpha-\beta)^2\}$.
Indeed one cannot even consistently distinguish the planted partition model
from the corresponding Erd\"os-Renyi model!

The other half of Conjecture~\ref{conj:estimation} follows from
a converse to Theorem~\ref{thm:distinguish}:

\begin{theorem} \label{thm:cycles} 
  If $(a - b)^2 > 2(a + b)$, then $\P_n$ and $\P'_n$ are asymptotically
  orthogonal. 
  Moreover, a consistent estimator for $a,b$ can
  be obtained as follows: let $X_k$ be the number of
  cycles of length $k$, and define
  \begin{align*}
    \hat d_n &= \frac{2|E|}{n} \\
    \hat f_n &= (2k_n X_{k_n} - \hat d_n^{k_n})^{1/{k_n}}
  \end{align*}
  where $k_n = \lfloor \log^{1/4} n \rfloor$. Then
  $\hat d_n + \hat f_n$ is a consistent estimator for $a$ and
  $\hat d_n - \hat f_n$ is a consistent estimator for $b$.
 
 Finally, there is an efficient algorithm whose running time is polynomial in $n$ to calculate $\hat d_n$ and $\hat f_n$.  
\end{theorem}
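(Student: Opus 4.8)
\emph{Proof strategy.} Set $d=\tfrac{a+b}{2}$ and $f=\tfrac{a-b}{2}$, so the hypothesis $(a-b)^2>2(a+b)$ reads $f^2>d$; since $0<f<d$ this forces $d>1$, and hence $f>1$ as well. Everything is driven by counts of short cycles. For a fixed length $k$, write the ``weight'' of a potential edge $(u,v)$ as $\tfrac{a+b}{2n}+\tfrac{a-b}{2n}\sigma_u\sigma_v$; expanding the product of these weights around a $k$-cycle and taking expectations over $\sigma$, every mixed term vanishes because each vertex label appears exactly twice in a cycle, leaving only the ``all $\tfrac{a+b}{2n}$'' and ``all $\tfrac{a-b}{2n}\sigma_u\sigma_v$'' terms. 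This yields
\[
  \E_{\P_n}[X_k]=\frac{n(n-1)\cdots(n-k+1)}{2k}\Bigl[\bigl(\tfrac{a+b}{2n}\bigr)^k+\bigl(\tfrac{a-b}{2n}\bigr)^k\Bigr]=\frac{d^k+f^k}{2k}\,\bigl(1-O(k^2/n)\bigr),
\]
while the same computation with $a=b$ gives $\E_{\P'_n}[X_k]=\tfrac{d^k}{2k}(1-O(k^2/n))$. Thus $2kX_k$ carries a ``signal'' $f^k$ on top of the ``null value'' $d^k$ under the planted model but not under the Erd\H os--R\'enyi model, and the estimator subtracts off the null value (estimated by $\hat d_n^{k}$, legitimate since $\hat d_n\to d$ by a routine first/second-moment bound on $|E|$) and extracts $f$ via a $k$-th root.

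\emph{Consistency.} The only nontrivial input is a second-moment bound for $X_{k_n}$. The key observation is that two \emph{vertex-disjoint} potential cycles have zero covariance under $\P_n$, because their label variables are independent; hence $\Var_{\P_n}(X_k)$ is the diagonal contribution $\E_{\P_n}[X_k](1+o(1))$ plus a contribution from pairs of cycles sharing at least one vertex, which a standard configuration count bounds by $O(a^{2k}\mathrm{poly}(k)/n)$. Now take $k=k_n=\lfloor\log^{1/4}n\rfloor$: each factor of the form $(d/f)^{k_n}$, $(a/f)^{k_n}$, or $(a^2/f^2)^{k_n}$ is $n^{o(1)}$, so $\Var_{\P_n}(2k_nX_{k_n})=2k_nd^{k_n}(1+o(1))+O(a^{2k_n}\mathrm{poly}(k_n)/n)=o(f^{2k_n})$ --- the first term is beaten by $f^{2k_n}$ precisely because $f^2>d$, the second because $a^{2k_n}/n=f^{2k_n}n^{o(1)-1}$. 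Since $\hat d_n=d+O_{\P_n}(n^{-1/2})$ gives $\hat d_n^{k_n}=d^{k_n}+o_{\P_n}(f^{k_n})$, Chebyshev yields $2k_nX_{k_n}=d^{k_n}+f^{k_n}+o_{\P_n}(f^{k_n})$, so $2k_nX_{k_n}-\hat d_n^{k_n}=f^{k_n}(1+o_{\P_n}(1))$, which is positive a.a.s.\ (as $f>1$). Taking the $k_n$-th root gives $\hat f_n\to f$ in $\P_n$-probability, and therefore $\hat d_n+\hat f_n\to d+f=a$ and $\hat d_n-\hat f_n\to d-f=b$.

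\emph{Orthogonality and efficiency.} For asymptotic orthogonality the cleanest route is via fixed lengths: by the method of moments, $(X_3,\dots,X_K)$ converges in law to independent Poissons with means $\lambda_k=\tfrac{d^k+f^k}{2k}$ under $\P_n$ and $\lambda'_k=\tfrac{d^k}{2k}$ under $\P'_n$ (the vanishing of disjoint-cycle covariances again makes the joint factorial moments go through). The Hellinger affinity of the limiting laws is $\prod_{k=3}^K\exp\bigl(-\tfrac12(\sqrt{\lambda_k}-\sqrt{\lambda'_k})^2\bigr)$, and $\sum_k(\sqrt{\lambda_k}-\sqrt{\lambda'_k})^2\asymp\sum_k\tfrac1k(f^2/d)^k=\infty$ whenever $f^2>d$; letting $K\to\infty$ makes the affinity $0$, so $\P_n$ and $\P'_n$ are mutually singular in the limit, i.e.\ asymptotically orthogonal. (Equivalently one can use the estimator: $2k_nX_{k_n}-\hat d_n^{k_n}$ is of order $f^{k_n}\gg(k_n d^{k_n})^{1/2}$ under $\P_n$ but only of order $(k_n d^{k_n})^{1/2}$ under $\P'_n$, and $f^{2k_n}/d^{k_n}=(f^2/d)^{k_n}\to\infty$ grows faster than $\log n$, so a threshold based on $\hat d_n$ separates the two measures.) For the running time: $\hat d_n$ needs only $|E|$, and a.a.s.\ the maximum degree of $G$ is $O(\log n/\log\log n)$, so the set of length-$k_n$ cycles through any given vertex has size at most $\Delta^{k_n}=\exp(O(\log^{1/4}n\,\log\log n))=n^{o(1)}$ and is enumerated by a depth-$k_n$ search; running this from every vertex computes $X_{k_n}$ --- hence the whole estimator --- in time $n^{1+o(1)}$ (abort and output $0$ on the negligible event that the degree bound fails).

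\emph{Main difficulty.} The conceptual content is the first-moment identity exposing the hidden term $f^k$; the actual work is the uniform control of all the stray $(1+o(1))$ factors and of $\Var(X_{k_n})$ as $k_n\to\infty$, above all the observation that vertex-disjoint cycles contribute no covariance, together with verifying that every error term is $o(f^{k_n})$ --- which is exactly where the hypothesis $f^2>d$ and the slow growth $k_n=\lfloor\log^{1/4}n\rfloor$ are used. Orthogonality and the polynomial running time then follow with little extra effort.
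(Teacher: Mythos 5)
Your proof is correct, and it diverges from the paper's in three places worth noting.

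\emph{First moment.} You write $\P((u,v)\in E\mid\sigma)=\tfrac{a+b}{2n}+\tfrac{a-b}{2n}\sigma_u\sigma_v$ and expand the product over the cycle; every cross-term dies because the only even subgraphs of a cycle are $\emptyset$ and the whole cycle. The paper instead conditions on the number of sign changes around the cycle and evaluates a binomial sum (Lemma~\ref{lem:mean-cycles}). Your version is slicker and amounts to the same thing. Your variance discussion (vertex-disjoint cycles are genuinely independent, so only overlapping pairs contribute) is the content underlying the paper's method-of-moments proof of Theorem~\ref{thm:cycle-poisson}; you just do it by hand instead of invoking Poisson convergence of falling factorial moments.

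\emph{Orthogonality.} The paper's route is a single explicit test event: pick $\sqrt{d}<\rho<f$, set $A_n=\{X_{k_n}\le\E_{\P'}X_{k_n}+\rho^{k_n}\}$, and use Chebyshev on both sides. You offer this too (parenthetically) but lead with a Hellinger-affinity computation: the joint law of $(X_3,\dots,X_K)$ converges to products of Poissons, the affinity of $\Pois(\lambda)$ and $\Pois(\lambda')$ is $\exp(-\tfrac12(\sqrt\lambda-\sqrt{\lambda'})^2)$, and the resulting series $\sum_k\tfrac1k(f^2/d)^k$ diverges exactly when $f^2>d$. This is a valid and arguably more informative route, but to make it airtight you should state the two lemmas you are implicitly using: the data-processing inequality for Hellinger affinity, and the fact that for $\N$-valued laws weak convergence implies $\ell^1$ (Scheff\'e) hence $\ell^2$-convergence of square roots, so the affinity of the truncated cycle counts actually converges to the Poisson affinity. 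Without those two steps, weak convergence alone does not control the affinity.

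\emph{Algorithm.} You enumerate cycles by depth-$k_n$ search from each vertex, using the a.a.s.\ bound $\Delta=O(\log n/\log\log n)$ to get $\Delta^{k_n}=n^{o(1)}$ work per vertex. The paper instead observes that with high probability no ball of radius $2k_n$ contains two cycles, so $2k_nX_{k_n}$ equals the number of closed non-backtracking walks of length $k_n$, which it computes by the three-term recursion $A_{v,j}=A_{v,j-1}A_v-D_vA_{v,j-2}$ on local adjacency matrices. Both give near-linear time; the paper's has the advantage of not needing a max-degree bound and of being deterministic (it computes the right quantity on the event that balls contain at most one cycle, and that event is explicit). Your version is fine for the claim as stated since consistency only needs the estimator to be correct a.a.s.

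One further remark: the paper's own proof of the estimator step quietly switches to $k(n)=o(\log^{1/4}n)$ to make $\hat d^{k}-d^k\to0$ trivially, while the theorem statement uses $k_n=\lfloor\log^{1/4}n\rfloor$. You handle the stated $k_n$ directly by showing $\hat d_n^{k_n}-d^{k_n}=O_{\P_n}(k_n d^{k_n}n^{-1/2})=o(f^{k_n})$, which is cleaner and actually matches the theorem statement; that is a small improvement over the paper's exposition.
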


\subsection{Proof Techniques} 

\subsubsection{Short Cycles} 
To establish Theorem~\ref{thm:cycles}
we count the number of short cycles in $G \sim \P_n$.
It is well-known that the number of
$k$-cycles in a graph drawn from $\P'_n$
is approximately Poisson-distributed with mean
$\frac 1k (\frac{a+b}{2})^k$. Modifying the proof of this result, we will show that 
we will show that the number of
$k$-cycles in $\P_n$ is approximately Poisson-distributed with mean
$\frac 1k \big((\frac{a+b}{2})^k + (\frac{a-b}{2})^k\big)$.

By comparing the first and second moments of Poisson
random variables and taking $k$ to increase slowly with $n$, one can
distinguish between the cycle counts of $G \sim \P_n$ and
$G \sim \P'_n$ as long
as $(a-b)^2 > 2(a+b)$.

The first half of Conjecture~\ref{conj:estimation}
follows because the same comparison of first and second moments
implies that counting cycles gives a consistent estimator for
$a + b$ and $a -b $ (and hence also for $a$ and $b$).

While there is in general no efficient algorithm for counting cycles in graphs, we show that with high probability the number of short cycles coincides with the number of non-backtracking walks of the same length which can be computed efficiently using matrix multiplication. 

The proof of Theorem~\ref{thm:cycles} is carried out in Section~\ref{sec:cycles}. 

\subsubsection{Non-Reconstruction}
As mentioned earlier, Theorem~\ref{thm:non-reconstruction} intuitively follows from the fact 
that the neighborhood of a vertex in $\curlyG(n, \frac{a}{n}, \frac{b}{n})$
should look like a random labelled tree with high probability and the distribution
of that labelled tree should converge as $n \to \infty$ to the
multi-type branching process defined above.  While this intuition is not too hard to justify for small neighborhoods (by proving there are no short cycles etc.) the global ramifications are more challenging to establish. This is
because, that conditioned on the graph structure, the model is neither an Ising model, nor a Markov random field! This is due to two effects: 
\begin{itemize}
\item The fact that the two clusters are of the same (approximate) size. This amounts to a global conditioning on the number of $+/-$'s.
\item The model is not even a Markov random field conditioned on the number of $+$ and $-$ vertices.
  This follows from the fact that for every two vertices $u,v$ that do not form an edge,
  there is a different weight for $\sigma_u = \sigma_v$ and $\sigma_u \neq \sigma_v$.
  In other words, if $a>b$, then there is a slight repulsion (anti-ferromagnetic interaction)
  between vertices not joined by an edge. 
\end{itemize}
In Section~\ref{sec:non_reconstruction}, we prove Theorem~\ref{thm:non-reconstruction} by showing how to overcome the challenges above.

\subsubsection{The Second Moment} 
A major effort is devoted to the proof Theorem~\ref{thm:distinguish}. 
In the proof we show that the random variables $\frac{\P_n(G)}{\P'_n(G)}$ don't have much
mass near 0 or $\infty$. Since the margin of $\P_n$ is somewhat
complicated to work with, the first step is to enrich the distribution
$\P'_n$ by adding random labels. Then we show that the random
variables $Y_n := \frac{\P_n(G, \sigma)}{\P'_n(G, \sigma)}$ don't have mass
near 0 or $\infty$.  Our proof is one of the most elegant applications of the second moment method in the context 
of statistical physics model. We derive an extremely explicit formula for the second moment of $Y_n$ in 
Lemma~\ref{lem:second-mom}. In particular we show that 
\[
\E Y_n^2
= (1 + o(1)) \frac{e^{-t/2 - t^2/4} }{\sqrt{1-t}}, \quad t = \frac{(a-b)^2}{2(a+b)}
\]
This already show that the second moment is bounded off $(a-b)^2 < 2(a+b)$. However, in order to establish the existence of a density, we also need to show that $Y_n$ is bounded away from zero asymptotically. 
In order to establish this, we utilize the small graph conditioning method by calculating joint moments of the number of cycles and $Y_n$. 
It is quite surprising that this calculation can be carried out in rather elegant manner. 

\section{Counting cycles}\label{sec:cycles}

The main result of this section is that the number of $k$-cycles of $G
\sim \P_n$ is approximately Poisson-distributed. We will then use this
fact to show the first part of Theorem~\ref{thm:distinguish}.
Actually, Theorem~\ref{thm:distinguish} only requires us to calculate
the first two moments of the number of $k$-cycles, but the rest of the
moments require essentially no extra work, so we include them for completeness.

\begin{theorem}\label{thm:cycle-poisson}
  Let $X_{k,n}$ be the number of $k$-cycles of $G$, where $G \sim \P_n$.  If
$k = O(\log^{1/4}(n))$ then
\[
  X_{k,n} \toD \Pois\left(\frac{1}{k 2^{k+1}}
  \big((a + b)^k + (a-b)^k\big)\right).
\]
\end{theorem}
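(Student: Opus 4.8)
The plan is to use the method of moments for Poisson convergence: I will show that for each fixed $j$, the $j$-th factorial moment of $X_{k,n}$ converges to $\lambda_k^j$, where $\lambda_k = \frac{1}{k2^{k+1}}\big((a+b)^k + (a-b)^k\big)$. Since the factorial moments of a $\Pois(\lambda_k)$ random variable are exactly $\lambda_k^j$, and the Poisson distribution is determined by its moments, this yields convergence in distribution. (One must be mildly careful since $k = k_n$ is allowed to grow, but for the distributional statement with $k$ fixed this is the clean route; the $k_n$-dependence matters only for Theorem~\ref{thm:cycles}, whose proof presumably tracks error terms uniformly.)

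First I would set up the computation of $\E X_{k,n}$. A potential $k$-cycle is determined by an ordered selection of $k$ distinct vertices up to rotation and reflection, so there are $\frac{1}{2k} n(n-1)\cdots(n-k+1) = (1+o(1))\frac{n^k}{2k}$ of them. For a fixed such cycle on vertices $v_1,\dots,v_k$, I need $\P_n(\text{all } k \text{ edges present})$. Conditioning on the labels $\sigma_{v_1},\dots,\sigma_{v_k}$, each edge $(v_i,v_{i+1})$ is present independently with probability $\frac{a}{n}$ if $\sigma_{v_i}=\sigma_{v_{i+1}}$ and $\frac{b}{n}$ otherwise; so the conditional probability is $n^{-k}\prod_{i=1}^k (a \text{ or } b)$. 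Averaging over the uniform i.i.d.\ labels, the key identity is that $\E\prod_{i=1}^k f(\sigma_{v_i},\sigma_{v_{i+1}})$, where $f$ takes value $a$ on the diagonal and $b$ off it, equals $2^{-k}\big((a+b)^k + (a-b)^k\big)$ — this is just the trace of the $k$-th power of the $2\times 2$ matrix $\begin{pmatrix} a & b \\ b & a \end{pmatrix}$, whose eigenvalues are $a+b$ and $a-b$, divided by $2^k$ (the $2^k$ coming from the uniform label distribution, or equivalently from normalizing the transition structure). Multiplying, $\E X_{k,n} = (1+o(1)) \frac{n^k}{2k} \cdot n^{-k} \cdot 2^{-k}\big((a+b)^k+(a-b)^k\big) = (1+o(1))\lambda_k$.

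For the higher factorial moments, $\E (X_{k,n})_j$ counts ordered $j$-tuples of \emph{distinct} potential $k$-cycles with all edges present. The dominant contribution comes from $j$-tuples of \emph{vertex-disjoint} cycles: there are $(1+o(1))\big(\frac{n^k}{2k}\big)^j$ such tuples, and for vertex-disjoint cycles the edge-presence events factor (conditionally on labels the edges are independent, and the label-expectation also factors over disjoint vertex sets), giving $(1+o(1))\lambda_k^j$. The remaining step is to show that tuples sharing vertices contribute $o(1)$: if the cycles overlap, the total number of distinct vertices involved is at most $jk - 1$, so there are $O(n^{jk-1})$ such configurations, while the probability of all the (at least $jk - (jk-1) = $ ... ) edges being present is $O(n^{-(jk - \text{something})})$; a short combinatorial bookkeeping shows the exponent of $n$ is strictly negative, with room to spare even for $k = O(\log^{1/4} n)$ since $k^j$ grows only sub-polynomially. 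Here one uses that each connected non-tree-like overlap pattern gains at least one extra edge per reduction in vertex count. I expect \textbf{this overlap-bounding step to be the main obstacle}: one must enumerate the ways two or more $k$-cycles can intersect, control the number of edges forced to be present in each case (getting an extra factor of $n^{-1}$ for each ``excess'' edge beyond a spanning tree of the union), and verify that the resulting bound, when summed over all intersection patterns and over $j$ bounded, is $o(1)$ uniformly in the allowed range of $k$ — the slow growth $k = O(\log^{1/4} n)$ is exactly what makes the error terms (which carry factors polynomial in $k$) negligible against the polynomial-in-$n$ savings.
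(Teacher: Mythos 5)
Your proof takes the same route as the paper's: method of moments, splitting the $j$-th factorial moment of $X_{k,n}$ between vertex-disjoint $j$-tuples of $k$-cycles (which give $\lambda_k^j$ by independence of disjoint cycles) and overlapping tuples (the error term). Two points of difference are worth noting. For the single-cycle probability, you identify the label-average $\sum_{\sigma\in\{\pm\}^k}\prod_i f(\sigma_{v_i},\sigma_{v_{i+1}})$ with the trace of the $k$-th power of the $2\times 2$ matrix with diagonal entries $a$ and off-diagonal entries $b$, whose eigenvalues are $a+b$ and $a-b$; the paper's Lemma~\ref{lem:mean-cycles} instead counts sign changes around the cycle, observes there must be an even number of them, and sums $\binom{k}{m}a^{k-m}b^m$ over even $m$. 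Both give $2^{-k}\bigl((a+b)^k+(a-b)^k\bigr)$, and your transfer-matrix route is arguably cleaner. For the overlapping-tuple error term, which you correctly flag as the crux, you propose to redo the Erd\"os-Renyi combinatorics from scratch, using that a union $H$ of distinct $k$-cycles with a shared vertex satisfies $e(H)\ge v(H)+1$, and that both the number of isomorphism types and the factor $c^{e(H)}$ are $n^{o(1)}$ when $k=O(\log^{1/4}n)$. That is a correct line of attack, but the paper avoids it entirely with a one-line stochastic domination: under $\P_n$ every edge appears with probability at most $\max\{a,b\}/n$ regardless of labels, so the overlap count is dominated by its analogue under $\curlyG(n,\max\{a,b\}/n)$, for which the vanishing is classical and simply cited. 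Keeping that reduction in your toolbox would have spared you the step you anticipated being the main obstacle.
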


Before we prove this, let us explain how it implies 
Theorem~\ref{thm:cycles}. From now on, we will write
$X_k$ instead of $X_{k,n}$.

\begin{proof}[Proof of Theorem~\ref{thm:cycles}]
We start by proving the first statement of the theorem. 
Let's recall the standard fact (which we have mentioned before) that
under $\P'_n$, $X_k \toD \Pois\Big(\frac{(a+b)^k}{k 2^{k+1}}\Big)$. With
this and Theorem~\ref{thm:cycle-poisson} in mind,
\begin{align*}
\E_{\P} X_k, \Var_\P X_k &\to \frac{(a+b)^k + (a - b)^k}{k 2^{k+1}} \\
\E_{\P'} X_k, \Var_{\P'} X_k &\to \frac{(a+b)^k}{k 2^{k+1}}.
\end{align*}

Set $k = k(n) = \log^{1/4} n$ (although any sufficiently slowly
increasing function of $n$ would do).  Choose $\rho$ such that
$\frac{a-b}{2} > \rho > \sqrt{\frac{a+b}{2}}$.  Then $\Var_\P X_k$ and
$\Var_{\P'} X_k$ are both $o(\rho^{2k})$ as $k \to \infty$.
By Chebyshev's inequality,
$X_k \le \E_{\P'} X_k + \rho^k$ $\P'$-a.a.s.\ and $X_k \ge \E_\P X_k -
\rho^k$ $\P$-a.a.s.  Since $\E_\P X_k - \E_{\P'} X_k = \frac{1}{2k}
(\frac{a-b}{2})^k = \omega(\rho^k)$, it follows that $\E_\P X_k -
\rho^k \ge \E_{\P'} X_k + \rho^k$ for large enough $k$.  And so, if we
set $A_n = \{X_{k(n)} \le \E_{\P'} X_{k(n)} + \rho^k\}$ then $\P'(A_n)
\to 1$ and $\P(A_n) \to 0$.

We next show that Theorem~\ref{thm:cycle-poisson}
gives us an estimator for $a$ and $b$ that is consistent
when $(a - b)^2 > 2(a+b)$. First of all, we have a consistent estimator $\hat d$ for 
$d := (a+b)/2$ by simply counting the number of edges.
Thus, if we can estimate $f := (a-b)/2$ consistently then
we can do the same for $a$ and $b$.
Our estimator for $f$ is
\[
  \hat f = (2k X_k - \hat d^k)^{1/k},
\]
where $\hat d$ is some estimator with $\hat d \to d$ $\P$-a.a.s.\ and
$k = k(n)$ increases to infinity slowly enough so that
$k(n) = o(\log^{1/4} n)$ and
$\hat d^k - d^k \to 0$ $\P$-a.a.s.
Take $\sqrt{\frac{a+b}{2}} < \rho < \frac{a-b}{2} = f$;
by Chebyshev's inequality,
$2k X_k - d^k \in [f^k - \rho^k, f^k + \rho^k]$
$\P$-a.a.s.
Since $k = k(n) \to \infty$, $\rho^k = o(f^k)$. Thus,
$2k X_k - d^k = (1 + o(1)) f^k$ $\P$-a.a.s.
Since $\hat d^k - d^k \to 0$
and $f > 1$,
$2k X_k - \hat d^k = f^k + o(1) = (1 + o(1)) f^k$ $\P$-a.a.s.\ and so
$\hat f$ is a consistent estimator for $f$. Finally we take $\hat a = \hat d + \hat f$ and $\hat b = \hat d - \hat f$. 
\end{proof}

\begin{proposition} 
Let $k(n) = o(\log^{1/4} n)$. There is an algorithm whose running time is $O(n \log^{O(1)} n)$
for calculating $\hat a$ and $\hat b$.
\end{proposition}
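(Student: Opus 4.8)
The plan is to show that, with high probability under $\P_n$, the cycle count $X_{k(n)}$ that appears in the estimator $\hat f$ can be replaced by a quantity that is computable in near-linear time, namely a suitable count of short non-backtracking walks, and then to exhibit a fast algorithm computing the latter. First I would recall the non-backtracking (Hashimoto) matrix $B$ indexed by the $2|E|$ directed edges of $G$, with $B_{(u\to v),(w\to x)} = \mathbbm{1}\{v=w, u\ne x\}$. A standard identity expresses the number of closed non-backtracking walks of length $k$, namely $\operatorname{tr}(B^k)$, as a sum over $k$-cycles (each traversed in $2$ directions from $k$ starting points, contributing $2k$ to $\operatorname{tr}(B^k)$) plus contributions from other closed non-backtracking structures, the shortest of which are ``tadpoles'' and walks that wrap multiple times around a shorter cycle. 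Concretely, $\operatorname{tr}(B^k) = 2k X_k + (\text{error})$, where the error is supported on vertices lying within distance $k$ of a union of two cycles or of a cycle with a pendant path closing up.

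Next I would show that under $\P_n$ with $k(n) = o(\log^{1/4} n)$ this error term vanishes with high probability. This is the standard sparse-random-graph ``locally tree-like'' phenomenon: the expected number of vertices whose $k$-neighborhood contains two independent cycles, or a cycle-plus-chord, is $O(\operatorname{poly}(k)\,(a+b)^{2k}/n) = o(1)$ since $(a+b)^{2k} = \exp(O(\sqrt{\log n})) = n^{o(1)}$. I would give this first-moment bound directly for $\P_n$; the computation is essentially the same as the one already carried out in the proof of Theorem~\ref{thm:cycle-poisson}, since edge probabilities only differ from the Erd\H{o}s--R\'enyi case by a bounded factor. Hence $\operatorname{tr}(B^{k(n)}) = 2k(n) X_{k(n)}$ $\P_n$-a.a.s., so $\tfrac{1}{2k(n)}\operatorname{tr}(B^{k(n)})$ may be substituted for $X_{k(n)}$ in the definition of $\hat f_n$ without affecting consistency, and $\hat d_n = 2|E|/n$ is trivially computable in $O(|E|) = O(n\log^{O(1)} n)$ time a.a.s.

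It remains to compute $\operatorname{tr}(B^{k})$ for $k = k(n)$ in time $O(n\log^{O(1)} n)$. Here I would not form $B^k$ explicitly (that matrix is dense); instead I compute $\operatorname{tr}(B^k) = \sum_i (B^k)_{ii} = \sum_i e_i^\top B^k e_i$ by iterated sparse matrix--vector multiplication, or more cheaply by the power-trace trick: pick the $2|E|$ standard basis vectors is too slow, so instead note $\operatorname{tr}(B^k)$ can be read off from $k$ rounds of the message-passing recursion. A.a.s.\ $G$ has $O(n)$ edges and maximum degree $O(\log n / \log\log n)$, so $B$ has $O(n\log n)$ nonzero entries; each sparse mat-vec costs $O(n\log n)$, and a single full trace computation by the na\"ive ``one basis vector per coordinate'' route would cost $O(n^2\log n)$, which is too slow. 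The fix I would use: run the length-$k$ non-backtracking walk count from every directed edge simultaneously by maintaining, at each directed edge $(u\to v)$ and each step $t\le k$, the number $N_t(u\to v)$ of non-backtracking walks of length $t$ that start at a fixed reference and currently sit on $(u\to v)$ — but this still has a per-source cost. Cleaner still, and what I expect to be the actual argument: $\operatorname{tr}(B^k) = \sum_{j} \mu_j^k$ over eigenvalues is not needed; rather, since a.a.s.\ $2k X_k = \operatorname{tr}(B^k)$ and separately a.a.s.\ this equals the number of \emph{self-returning} non-backtracking walks, one computes for each vertex $v$ the number of non-backtracking walks of length $k$ from $v$ to $v$ by BFS-style dynamic programming restricted to the $k$-ball around $v$; a.a.s.\ every $k$-ball has $O(\operatorname{poly}(k))$ vertices and at most one cycle, so this DP runs in $\operatorname{poly}(k) = \log^{O(1)} n$ time per vertex, for a total of $O(n\log^{O(1)} n)$.

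The main obstacle is the algorithmic bookkeeping in the last step: one must argue that a.a.s.\ \emph{every} $k(n)$-neighborhood is small (size $\log^{O(1)} n$) and unicyclic, so that the per-vertex walk-counting DP is cheap and correct, and that the rare bad vertices (whose neighborhoods are large or contain multiple cycles) are so few and contribute so little that discarding them — or handling them by a crude fallback — does not disturb consistency of $\hat f_n$. Both the ``few bad vertices'' and ``small good neighborhoods'' claims are routine first-moment/branching-process estimates of the type already used for Theorem~\ref{thm:cycle-poisson}, using $(a+b)^{k(n)} = n^{o(1)}$; assembling them into a clean $O(n\log^{O(1)}n)$ bound, with the matrix-multiplication-based alternative as a backup for the global trace, is the only place requiring care.
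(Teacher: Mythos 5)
Your final approach is essentially the one the paper uses: reduce $2kX_k$ to a sum of per-vertex non-backtracking return counts, argue that a.a.s.\ each radius-$k$ ball is small ($n^{o(1)}$, in fact $\log^{O(1)}n$) and contains at most one cycle, and compute each return count by a local recursion on that ball in $\log^{O(1)}n$ time (the paper writes this as the Chebyshev-type matrix recursion $A_{v,j}=A_{v,j-1}A_v - D_v A_{v,j-2}$, which is the same dynamic program you describe). The Hashimoto-matrix detour and the global trace computation you consider and discard are not in the paper; your explicit flagging of the ``rare bad neighborhoods'' issue is a point the paper handles only with a ``w.h.p.\ each $2k(n)$-ball contains at most one cycle'' statement.
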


\begin{proof} 
  Recal $\hat f$ and $\hat d$ from the proof of Theorem~\ref{thm:cycles}.
Clearly, we can compute $\hat d$ in time which is linear in the number of edges.
Thus, we need to show how to find 
$X_k$ in time $O(n \log^{O(1)} n)$. It is easy to see that with high probability,
each neighborhood of radius $2 k(n)$ contains at most one cycle.
Thus, the number of cycles of length $k$ is the same as $\sum_v C_v$, where
$C_v$ is the number of non backtracking walks of length $k$ that start and end at $v$. 

To calculate $C_v$, let $B(v,k)$ be the radius $k$ ball around $v$ in $G$. Let $D_v$ be a diagonal matrix such that for each vertex 
$w \in  B(v,k)$, the diagonal entry corresponding to $w$ is the degree of $w$ in $B(v,r)$. 
Let $A_v$ be the adjacency matrix of $B(v,r)$. It is easy to see that w.h.p. for each $v$, 
$A_v, D_v$ can be generated in $\log^{O(1)} n$ time. Now define $A_{v,0} = I,
A_{v,1} = A_v$ and $A_{v,j} = A_{v,j-1} A_v - D_v A_{v,j-2}$.
Then it is easy to see that the $(v,v)$ entry of $A_{v,k}$ is the number of non-backtracking walks from $v$ to $v$ of length $k$. 
The proof follows. 
\end{proof}

Now we will prove Theorem~\ref{thm:cycle-poisson} using the method of
moments. Recall, therefore, that if $Y \sim \Pois(\lambda)$
then $\E Y_{[m]} = \lambda^m$, where $Y_{[m]}$ denotes the falling factorial
$Y (Y-1) \cdots (Y - m + 1)$. It will therefore be our goal
to show that $\E (X_k)_{[m]} \to \Big(\frac{(a+b)^k + (a - b)^k}{k 2^{k+1}}\Big)^m$.
It turns out that this follows almost entirely from the corresponding
proof for the Erd\"os-Renyi model. The only additional work we
need to do is in the case $m=1$.

\begin{lemma}\label{lem:mean-cycles}
If $k = o(\sqrt n)$ then
\[
\E_{\P} X_k = \binom{n}{k} \frac{(k-1)!}{2}
(2n)^{-k} \Big((a + b)^k + (a - b)^k\Big)
\sim \frac{1}{k 2^{k+1}}\left((a + b)^k + (a-b)^k\right).
\]
\end{lemma}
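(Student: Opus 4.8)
The plan is to compute $\E_\P X_k$ by linearity of expectation over all potential $k$-cycles. A $k$-cycle is determined by a cyclic sequence of $k$ distinct vertices, and there are exactly $\binom{n}{k}\frac{(k-1)!}{2}$ such potential cycles (choose the vertex set, then the number of cyclic orderings up to rotation and reflection). So it suffices to compute, for a fixed set of $k$ vertices arranged in a cycle, the probability that all $k$ edges of that cycle are present in $G \sim \P_n$. The subtlety compared to the Erd\H{o}s--R\'enyi case is that edge presence is not independent: it depends on the labels $\sigma$, which are i.i.d.\ uniform $\pm$. So the first step is to condition on $\sigma$, compute the conditional probability as a product over the $k$ edges, and then average over $\sigma$.

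Concretely, fix vertices $v_1, \dots, v_k$ in cyclic order and let $e_i$ be the edge $\{v_i, v_{i+1}\}$ (indices mod $k$). Conditioned on $\sigma$, the edge $e_i$ is present with probability $\frac an$ if $\sigma_{v_i} = \sigma_{v_{i+1}}$ and $\frac bn$ otherwise; write this as $\frac{1}{2n}\big((a+b) + (a-b)\sigma_{v_i}\sigma_{v_{i+1}}\big)$. Taking the product over $i = 1, \dots, k$ and then the expectation over $\sigma$, I would expand the product and use that $\E\big[\prod_{i} \sigma_{v_i}^{c_i}\big]$ vanishes unless every $c_i$ is even. The only two surviving terms in the expansion are the one where we pick $(a+b)$ from every factor — contributing $(a+b)^k$ — and the one where we pick $(a-b)\sigma_{v_i}\sigma_{v_{i+1}}$ from every factor, since then each $\sigma_{v_i}$ appears exactly twice (once from edge $e_{i-1}$, once from $e_i$), so the product of $\sigma$'s is $1$ — contributing $(a-b)^k$. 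Every mixed term leaves at least one $\sigma_{v_i}$ with odd exponent and averages to zero. Hence the probability that a fixed $k$-cycle is present equals $(2n)^{-k}\big((a+b)^k + (a-b)^k\big)$, and multiplying by the count $\binom{n}{k}\frac{(k-1)!}{2}$ gives the exact formula.

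For the asymptotic equivalence, I would use $\binom{n}{k}(k-1)! = \frac{n!}{k(n-k)!} = \frac{n^k}{k}\big(1 + O(k^2/n)\big)$, which is $\frac{n^k}{k}(1 + o(1))$ precisely because $k = o(\sqrt n)$; then $\binom{n}{k}\frac{(k-1)!}{2}(2n)^{-k} \sim \frac{1}{k 2^{k+1}}$, and multiplying by $(a+b)^k + (a-b)^k$ gives the stated expression. The main thing to be careful about is the uniformity in $k$: since $k$ is allowed to grow (the intended application takes $k \approx \log^{1/4} n$), one must check that the relative error in the Stirling-type estimate $\frac{n!}{(n-k)!} \sim n^k$ is genuinely $o(1)$ rather than merely $o(1)$ for fixed $k$, which is exactly why the hypothesis $k = o(\sqrt n)$ appears — the error is $\exp\big(O(k^2/n)\big) - 1 = o(1)$ under that condition. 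No real obstacle here; the label-averaging parity argument is the only conceptual point, and it is short.
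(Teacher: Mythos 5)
Your proof is correct, and its core idea — parity of label disagreements around the cycle — is the same as the paper's, but the bookkeeping is packaged differently. The paper conditions on $N$, the number of edges $\{v_i,v_{i+1}\}$ with $\sigma_{v_i}\ne\sigma_{v_{i+1}}$, derives $\P(N=m)=2^{-k+1}\binom{k}{m}$ for even $m$ (and $0$ for odd $m$) from a $\Binom(k-1,\tfrac12)$ count plus the parity constraint on a cycle, and then sums $a^{k-m}b^m$ against this distribution. You instead write the conditional edge probability multiplicatively as $\tfrac1{2n}\bigl((a+b)+(a-b)\sigma_{v_i}\sigma_{v_{i+1}}\bigr)$, expand the product over edges, and use orthogonality of the $\pm1$ characters to see that only the all-$(a+b)$ and all-$(a-b)$ terms survive — each $\sigma_{v_j}$ appears in exactly two factors, so the only subsets of edges contributing are $\emptyset$ and the full edge set, since a proper nonempty subset of a cycle's edges always leaves some vertex with odd degree in the chosen subgraph. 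Your Fourier-style expansion is marginally slicker and generalizes more transparently (e.g., it immediately explains the analogous computation in Lemma~\ref{lem:one-cycle}), while the paper's route is more elementary in flavor; both are short and the final arithmetic, including the $n_{[k]}\sim n^k$ step under $k=o(\sqrt n)$, is identical.
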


\begin{proof}
Let $v_0, \dots, v_{k-1}$ be distinct vertices.  Let $Y$ be the
indicator that $v_0 \dots v_{k-1}$ is a cycle in $G$. Then $\E_\P X_k
= \binom{n}{k} \frac{(k-1)!}{2} \E_\P Y$, so let us compute $\E_\P Y$.
Define $N$ to be the number of times in the cycle $v_1 \dots v_k$ that
$\sigma_{v_i} \ne \sigma_{v_{i+1}}$ (with addition taken modulo $k$).
Then
\[
\E_\P Y = \sum_{m=0}^k \P(N = m) \P((v_1 \cdots v_k) \in G | N = m)
= n^{-k} \sum_{m=0}^k \P(N = m) a^{k-m} b^{m}.
\]
On the other hand, we can easily compute $P(N = m)$: for each $i = 0,
\dots, k-2$, there is probability $\frac{1}{2}$ to have $\sigma_{v_i}
= \sigma_{v_{i+1}}$, and these events are mutually indepedent. But
whether $\sigma_{v_{k-1}} = \sigma_{v_0}$ is completely
determined by the other events since there must be an even number of
$i \in \{0, \dots, k-1\}$ such that $\sigma_{v_i} \ne
\sigma_{v_{i+1}}$.  Thus,
\begin{multline*}
\P(N = m) = \Pr\Big(\Binom\Big(k-1, \frac{1}{2}\Big) \in
\{m-1, m\}\Big) \\
= 2^{-k+1} \bigg(\binom{k-1}{m-1} + \binom{k-1}{m}\bigg)
= 2^{-k+1} \binom{k}{m}
\end{multline*}
 for even
$m$, and zero for odd $m$. Hence,
\begin{align*}
\E_\P Y &= n^{-k} 2^{-k+1} \sum_{m \text{ even}} a^{k-m} b^m
\binom{k}{m} \\
&= n^{-k} 2^{-k} \Big((a + b)^k + (a - b)^k\Big).
\end{align*}

The second part of the claim amounts to saying that $n_{[k]} \sim
n^k$, which is trivial when $k = o(\sqrt n)$.
\end{proof}

\begin{proof}[Proof of Theorem~\ref{thm:cycle-poisson}]
Let $\mu = \frac{1}{k2^k} \big((a+b)^k + (a-b)^k\big)$; our
goal, as discussed before Lemma~\ref{lem:mean-cycles},
is to show that $\E(X_k)_{[m]} \to \mu^m$.  Note that
$(X_k)_{[m]}$ is the number of ordered $m$-tuples of $k$-cycles in
$G$. We will divide these $m$-tuples into two sets: $A$ is the set of
$m$-tuples for which all of the $k$-cycles are disjoint, while $B$ is
the set of $m$-tuples in which at least one pair of cycles is not
disjoint.

Now, take $(C_1, \dots, C_m) \in A$. Since the $C_i$ are disjoint,
they appear independently in $G$. By the proof of
Lemma~\ref{lem:mean-cycles}, the probability that cycles $C_1, \dots,
C_m$ are all present is
\[
n^{-km} 2^{-km} \left((a + b)^{k} + (a-b)^{k}\right)^m.
\]
Since there are $\binom{n}{km} \frac{(km)!}{k^m}$ elements of $A$, it
follows that the expected number of vertex-disjoint $m$-tuples of
$k$-cycles is
\[
\binom{n}{km} \frac{(km)!}{k^m}
n^{-km} 2^{-km} \left((a + b)^{k} + (a-b)^{k}\right)^m \sim \mu^m.
\]

It remains to show, therefore, that the expected number of
non-vertex-disjoint $m$-tuples converges to zero. Let $Y$ be the
number of non-vertex-disjoint $m$-tuples,
\[
Y = \sum_{(C_1, \dots, C_m) \in B} \prod_{i=1}^m 1_{\{C_i \subset G\}}.
\]
Then the distribution of $Y$ under $\P$ is stochastically dominated by
the distribution of $Y$ under the Erd\"os-Renyi model
$\curlyG(n, \frac{\max\{a, b\}}{n})$.
It's well-known (see, eg.~\cite{B:random-graphs}, Chapter 4) that
as long as $k = O(\log^{1/4} n)$, $\E
Y \to 0$ under $\curlyG(n, \frac{c}{n})$ for any $c$; hence $\E Y \to
0$ under $\P$ also.
\end{proof}

\section{Non-reconstruction} \label{sec:non_reconstruction}

The goal of this section is to prove
Theorem~\ref{thm:non-reconstruction}. As we said in the introduction,
the proof of Theorem~\ref{thm:non-reconstruction} uses a connection
between $\curlyG(n, \frac an, \frac bn)$ and Markov processes
on trees. Before we go any further, therefore, we should define
a Markov process on a tree and state the result that we will use.

Let $T$ be an infinite rooted tree with root $\rho$. Given
a number
$0 \le \epsilon < 1$,
we will define a random
labelling $\tau \in \{\pm\}^T$. First, we draw $\tau_\rho$
uniformly in $\{\pm\}$. Then, conditionally independently given
$\tau_\rho$, we take every child $u$ of $\rho$ and
set $\tau_u = \tau_\rho$ with probability $1 - \epsilon$ and
$\tau_u = -\tau_\rho$ otherwise. We can continue this construction
recursively to obtain a labelling $\tau$ for which every
vertex, independently, has probability $1-\epsilon$ of having
the same label as its parent.

Back in 1966, Kesten and Stigum~\cite{KS} asked
(although they used somewhat different terminology) whether the label
of $\rho$ could be deduced from the labels of vertices at level
$R$ of the tree (where $R$ is very large). There are many
equivalent ways of stating the question. The interested reader
should see the survey~\cite{M:survey}, because we will only
mention two of them.

Let $T_R = \{u \in T: d(u, \rho) \le R\}$ and define
$\partial T_R = \{u \in T: d(u, \rho) = R\}$. We will write
$\tau_{T_R}$ for the configuration $\tau$ restricted to $T_R$.

\begin{theorem}\label{thm:markov}
Suppose $T$ is a Galton-Watson tree where the offspring distribution
has mean $d > 1$. Then
\[
\lim_{R \to \infty} \Pr(\tau_\rho = + | \tau_{\partial T_R}) = \frac{1}{2}
\text{ a.s.}
\]
if, and only if $d (1-2\epsilon)^2 \le 1$.
\end{theorem}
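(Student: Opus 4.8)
The plan is to recognize Theorem~\ref{thm:markov} as the Ising-model case of the classical reconstruction theorems of Kesten and Stigum~\cite{KS} (the ``only if'' direction) and of Evans, Kenyon, Peres and Schulman~\cite{EKPS} (the ``if'' direction), and to reduce the statement above to theirs. Write $\theta = 1-2\eps$, so that $\E[\tau_u\tau_v \mid T] = \theta^{d(u,v)}$, and set $z_R = \E\big[(2\Pr(\tau_\rho = + \mid \tau_{\partial T_R}) - 1)^2\big]$ (the case of interest being the survival event, extinction making the claim trivial). The first, routine, step is to show that ``$\Pr(\tau_\rho = + \mid \tau_{\partial T_R}) \to \tfrac12$ a.s.'' is equivalent to $z_R \to 0$: since $\tau_{\partial T_{R+1}}$ is obtained from $\tau_{\partial T_R}$ by running the broadcast one further level — a randomized map that does not see $\tau_\rho$ — the chain $\tau_\rho - \tau_{\partial T_R} - \tau_{\partial T_{R+1}}$ is Markov, so the mutual information $I(\tau_\rho;\tau_{\partial T_R})$ is non-increasing in $R$; it vanishes in the limit iff $z_R \to 0$, and the a.s.\ statement then follows (it is immediate from summability when the decay of $z_R$ is geometric, and from the finer estimates of~\cite{EKPS} at the threshold).

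For the ``only if'' direction, assume $d\theta^2 > 1$; I would exhibit a function of $\tau_{\partial T_R}$ that stays correlated with $\tau_\rho$ uniformly in $R$, namely the census $S_R = \sum_{u\in\partial T_R}\tau_u$. Conditioning on $T$ one computes $\E[S_R \mid \tau_\rho, T] = \tau_\rho\,\theta^R|\partial T_R|$ and $\E[S_R^2 \mid T] = \sum_{u,v\in\partial T_R}\theta^{d(u,v)}$, and, using that the offspring law has a finite second moment so that $|\partial T_R|/d^R \to W$ in $L^2$ with $W>0$ a.s.\ on survival, one gets $\E\big[(S_R/(d\theta)^R)^2\big] = O(1)$ while $\E\big[\tau_\rho\,S_R/(d\theta)^R\big] \to \E W > 0$. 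A Cauchy--Schwarz / Paley--Zygmund argument then shows the normalized census has a nondegenerate weak limit correlated with $\tau_\rho$, so $z_R$ cannot tend to $0$; this is the Kesten--Stigum half.

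For the ``if'' direction, assume $d\theta^2 \le 1$; here I would invoke~\cite{EKPS}. Their argument is a recursive estimate on the belief-propagation map on the tree, run through a carefully chosen functional of the posterior rather than its variance alone — the point being that the naive linearization of the Ising recursion $Y_\rho \approx \theta\sum_i \eta_i$ (with $\eta_i$ the children's posterior biases) produces nonnegative cross terms $\E[\eta_i\eta_j]\ge 0$, hence a bound on $z_{R+1}$ in the \emph{wrong} direction, so genuine work is needed to convert criticality of the linear part into decay. Granting their result, $z_R\to 0$ whenever $d\theta^2\le 1$, including the borderline $d\theta^2 = 1$; the only thing to check on our side is the translation between their formulation (broadcasting on a deterministic tree, with the branching number in the role of $d$) and the Galton--Watson setting, which causes no difficulty for the offspring laws of interest.

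The main obstacle is the critical case $d\theta^2 = 1$ in the ``if'' direction: there the linearized recursion neither grows nor shrinks, and one must exploit the strict contractivity of the Ising BP map away from the origin to extract decay of $z_R$ — this is the technical heart of~\cite{EKPS}. Everything on our side — the monotonicity and reduction to $z_R\to 0$, the second-moment computation for the census, and the $d\theta^2 < 1$ case — is routine.
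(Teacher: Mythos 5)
Your proposal is correct and mirrors the paper's own treatment exactly: the paper does not prove Theorem~\ref{thm:markov} but cites Kesten--Stigum~\cite{KS} for the reconstruction direction and Evans--Kenyon--Peres--Schulman~\cite{EKPS} (with~\cite{BRZ} for regular trees) for the non-reconstruction direction, and your census second-moment sketch for the Kesten--Stigum half and your discussion of why the linearized recursion fails at criticality are both accurate. One small simplification worth noting: the almost-sure convergence of $\Pr(\tau_\rho = + \mid \tau_{\partial T_R})$ needs no summability of $z_R$ nor any threshold-specific estimate, since the tree Markov property gives $\Pr(\tau_\rho = + \mid \tau_{\partial T_R}) = \Pr(\tau_\rho = + \mid \tau_{T \setminus T_{R-1}})$, a reverse martingale with respect to the decreasing filtration $\sigma(\tau_{T \setminus T_{R-1}})$, so it converges a.s.\ and in $L^2$ automatically, and $z_R \to 0$ is precisely the condition for the limit to be degenerate at $\tfrac12$.
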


In particular, if $d(1-2\epsilon)^2 \le 1$ then $\tau_{\partial T_R}$
contains no information about $\tau_\rho$.  Theorem~\ref{thm:markov}
was established by several authors over the course of more than 30
years. The non-reconstruction regime (ie.\ the case
$d (1 - 2\epsilon)^2 \le 1$) is the harder one, and that part
of Theorem~\ref{thm:markov} was first proved for $d$-ary trees in~\cite{BRZ},
and for Galton-Watson trees in~\cite{EKPS}. This latter work
actually proves the result for more general trees in terms of their branching number.

We will be interested in trees $T$ whose offspring distribution is
$\Pois(\frac{a+b}{2})$
and we will take $1-\epsilon = \frac{a}{a+b}$.
Some simple arithmetic applied to Theorem~\ref{thm:markov} then shows
that reconstruction of the root's label
is impossible whenever $(a-b)^2 \le 2(a+b)$.
Not coincidentally, this is the same threshold that
appears in Theorem~\ref{thm:non-reconstruction}.

\subsection{Coupling of balls in $G$ to the broadcast process on trees}

The first step in applying Theorem~\ref{thm:markov} to our
problem is to observe that a neighborhood of $(G, \sigma)
\sim \curlyG(n, \frac an, \frac bn)$ looks
like $(T, \tau)$. Indeed, fix $\rho \in G$
and let $G_R$ be the induced subgraph on $\{u \in G: d(u, \rho)\le R\}$.

\begin{proposition}\label{prop:coupling}
Let $R = R(n) = \lfloor \frac{1}{10\log(2(a + b))} \log n \rfloor$.
There exists a coupling between $(G, \sigma)$
and $(T, \tau)$ such that
$(G_R, \sigma_{G_R}) = (T_R, \tau_{T_R})$ a.a.s.
\end{proposition}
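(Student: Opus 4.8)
The plan is to construct the coupling by exploring the ball $G_R$ around $\rho$ in a breadth-first manner, exposing one vertex at a time, and matching this exploration against the breadth-first construction of the branching process $(T,\tau)$. At each step we have a ``frontier'' vertex $v$ in $G$ whose label $\sigma_v$ has already been determined, and we need to reveal its not-yet-explored neighbors together with their labels. In $G$, conditioned on everything revealed so far (in particular on $\sigma_v$ and on the set $U$ of vertices already touched by the exploration, whose labels are known), the neighbors of $v$ among the untouched vertices are obtained by including each untouched $w$ independently with probability $a/n$ if it will turn out that $\sigma_w = \sigma_v$ and $b/n$ if $\sigma_w \neq \sigma_v$. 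Since the labels of the roughly $n/2$ untouched vertices of each sign are themselves i.i.d.\ uniform, the number of same-sign children of $v$ is distributed as $\mathrm{Binom}(n_+, a/n)$ or $\mathrm{Binom}(n_-, a/n)$ (with $n_\pm$ the number of untouched vertices of the corresponding sign), and similarly the number of opposite-sign children is Binomial with parameter $b/n$. In the tree, $v$ has $\mathrm{Pois}(a/2 + b/2)$ children, each independently of the same sign with probability $1-\epsilon = a/(a+b)$; equivalently $\mathrm{Pois}(a/2)$ same-sign children and $\mathrm{Pois}(b/2)$ opposite-sign children. So at each exploration step we are coupling a pair of Binomials to a pair of Poissons.

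First I would make the exploration precise and bound the total number of vertices and steps involved. Since the offspring means are $(a+b)/2$ per vertex (and $a+b>2$ is not needed here — only an upper bound on growth is), a standard first-moment/Markov bound shows that with probability $1 - o(1)$ the ball $T_R$ — and simultaneously the explored portion of $G_R$ under the coupling — has size at most $n^{1/3}$, say, using $R = \lfloor \frac{1}{10\log(2(a+b))}\log n\rfloor$ so that $(2(a+b))^R \le n^{1/10}$. On this good event the number of untouched vertices of each sign stays $n/2 - O(n^{1/3}) = (1+o(1))n/2$, so each Binomial we encounter has parameters $(n_\pm, a/n)$ or $(n_\pm, b/n)$ with $n_\pm = (1+o(1))n/2$; its total variation distance to the matching Poisson $\mathrm{Pois}(a/2)$ or $\mathrm{Pois}(b/2)$ is $O(1/n)$ (by the Poisson approximation / Le Cam bound, the TV distance between $\mathrm{Binom}(N,\theta/n)$ and $\mathrm{Pois}(N\theta/n)$ is $O(\theta^2/n)$, plus an $o(1)$ correction for $n_\pm \ne n/2$ that is also $O(n^{-2/3})$ in TV). Thus each coupling step succeeds except with probability $O(n^{-2/3})$.

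The coupling is then assembled by the union bound over steps: perform the coupled exploration, and declare failure if the process runs for more than $n^{1/3}$ steps or if any individual Binomial-to-Poisson coupling fails. The probability of the first kind of failure is $o(1)$ by the size bound; the probability of the second is at most $n^{1/3} \cdot O(n^{-2/3}) = O(n^{-1/3}) = o(1)$. On the complementary (good) event the exploration of $G$ and the construction of $T$ agree on every vertex, every edge and every label out to distance $R$, which is exactly the statement $(G_R, \sigma_{G_R}) = (T_R, \tau_{T_R})$. One technical point to handle carefully: the edges of $G$ \emph{within} the explored set (i.e.\ between two frontier/explored vertices at the same or adjacent levels, not along the BFS tree) also carry information and must be exposed; but conditioned on the good event that $|G_R| \le n^{1/3}$, the expected number of such ``extra'' edges among the explored vertices is $O(n^{2/3} \cdot \frac{1}{n}) = o(1)$, so with probability $1-o(1)$ there are none, and $G_R$ is genuinely a tree isomorphic to $T_R$. (This is also where we would note there are no short cycles, consistent with Theorem~\ref{thm:cycle-poisson}.)

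The main obstacle, and the place that needs the most care, is the bookkeeping of the conditional distribution during the exploration: one must verify that conditioning on the history of the BFS — which includes the \emph{non-edges} discovered so far, and in particular the non-edges from previously explored vertices to the currently untouched vertices — does not distort the distribution of $v$'s children in a way that breaks the Binomial description. This is exactly the subtlety flagged in the introduction (the planted model is \emph{not} a Markov random field: non-edges are informative, because $a \ne b$). The resolution is that although a non-edge between an explored $u$ and an untouched $w$ does slightly tilt the posterior on $\sigma_w$, the tilt is by a factor $\frac{1-a/n}{1-b/n} = 1 + O(1/n)$ per non-edge, and each untouched $w$ has been ``non-adjacent'' to at most $|G_R| = O(n^{1/3})$ explored vertices, so the posterior label distribution of each untouched vertex is within $O(n^{-2/3})$ in total variation of uniform; folding this additional $o(1)$-per-step error into the union bound above leaves the conclusion intact. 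I would organize this as a single lemma bounding, uniformly over exploration histories on the good event, the TV distance between the true conditional law of the next vertex's labelled neighborhood and the corresponding branching-process law, and then conclude by the telescoping/union-bound argument.
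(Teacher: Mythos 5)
Your high-level plan -- breadth-first exploration, a step-by-step Binomial-to-Poisson coupling, a union bound over the $O(\text{poly}(n^{1/10}))$ exploration steps, plus ruling out extra edges inside the ball -- is the same as the paper's. But two things deserve attention.

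\textbf{A simplification you miss.} The statement asks for a coupling of the \emph{pairs} $(G,\sigma)$ and $(T,\tau)$. You are therefore free to reveal the entire label vector $\sigma$ first and then couple the conditional law of $G$ given $\sigma$ to that of $T$ given $\tau$, constructing $\tau$ to agree with $\sigma$ on the explored vertices. Once $\sigma$ is fixed, the edges of $G$ are independent Bernoullis and the BFS exposes them cleanly: a non-edge between a touched $u$ and an untouched $w$ carries no information about $\sigma_w$, because $\sigma_w$ is already known. The entire final paragraph about posterior tilts from non-edges is therefore unnecessary; the paper avoids that discussion altogether by conditioning on $\sigma$ from the outset. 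You should only pay once for the label randomness, via the event $\tilde\Omega = \{\,||V^+|-|V^-||\le n^{3/4}\,\}$, which holds a.a.s.\ by Hoeffding.

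\textbf{A numerical gap.} You write that the Binomial parameters $n_{\pm}$ are $n/2 - O(n^{1/3})$ and conclude a per-step total-variation error of $O(n^{-2/3})$. But the dominant deviation of $|V_r^{\sigma_u}|$ from $n/2$ comes from the realized label imbalance, not from the $O(n^{1/3})$ vertices removed during the exploration: it is $\Theta(\sqrt n)$ typically and $O(n^{3/4})$ on $\tilde\Omega$. So the per-step coupling error to $\Pois(a/2)$ is $O(n^{-1/4})$, not $O(n^{-2/3})$. Combined with your deliberately loose bound $|G_R| \le n^{1/3}$, the union bound yields $O(n^{1/3-1/4}) = O(n^{1/12})$, which diverges. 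The fix is to use the available sharper bound on the ball size: since $(2(a+b))^R \le n^{1/10}$, the explored ball has size $O(n^{1/10}\log n) = O(n^{1/8})$ a.a.s.\ (this is the content of the paper's Lemma~\ref{lem:C_r}, via the good events $C_r$), and then $O(n^{1/8})\cdot O(n^{-1/4}) = O(n^{-1/8}) \to 0$. Your argument is recoverable, but as written the exponents do not close.

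If you instead insist on \emph{not} revealing $\sigma$ first, you must be careful that describing the offspring count as $\Binom(n_{\pm},a/n)$ already amounts to conditioning on the realized untouched labels (so the label-imbalance issue above still bites), whereas saying ``the untouched labels are i.i.d.\ uniform'' gives a different (Poisson-Binomial with parameter $m_0 a/(2n)$) description and requires the tilting argument you sketch. Mixing the two descriptions, as the proposal currently does, is the source of the inconsistency; pick one, and preferably pick the one where $\sigma$ is revealed first.
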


For the rest of this section, we will
take $R = \lfloor \frac{1}{10 \log(2(a + b))} \log n \rfloor$.

The proof of this lemma essentially follows from the fact
that $(T, \tau)$ can be constructed from a sequence of independent
Poisson variables, while $(G_R, \sigma_{G_R})$ can be constructed
from a sequence of binomial variables, with approximately the same
means.

For a vertex $v \in T$, let $Y_v$ be the number of children of $v$;
let $Y_v^=$ be the number of children whose label is $\tau_v$ and let
$Y_v^\ne = Y_v - Y_v^=$.  By Poisson thinning, $Y_v^= \sim
\Pois(a/2)$, $Y_v^\ne \sim \Pois(b/2)$ and they are independent. Note
that $(T, \tau)$ can be entirely reconstructed from the label of the
root and the two sequences $(Y_i^=)$,
$(Y_i^\ne)$.

We can almost do the same thing for $G_R$, but it is a little more
complicated.  We will write $V = V(G)$ and $V_R = V(G) \setminus
V(G_R)$.  For every subset $W \subset V$, denote by $W^+$ and $W^-$
the subsets of $W$ that have the corresponding label. For example,
$V_R^+ = \{v \in V_R: \sigma_v = +\}$. For a vertex $v \in \partial
G_R$, let $X_v$ be the number of neighbors that $v$ has in $V_r$; then
let $X_v^=$ be the number of those neighbors whose label is $\sigma_v$
and set $X_v^\ne = X_v - X_v^=$. Then $X_v^= \sim
\Binom(|V_r^{\sigma_v}|, a)$, $X_v^\ne \sim \Binom(|V_r^{-\sigma_v}|, b)$
and they are independent. Note, however, that they do not contain
enough information to reconstruct $G_R$: it's possible to have $u, v
\in \partial G_r$ which share a child in $V_r$, but this cannot be
determined from $X_u$ and $X_v$. Fortunately, such events are very
rare and so we can exclude them. In fact, this process of carefully
excluding bad events is all that needs to be done to prove
Proposition~\ref{prop:coupling}.

In order that we can exclude their complements, let us give names
to all of our
good events. For any $r$, let $A_r$ be the event that no vertex
in $V_{r-1}$ has more than one neighbor in $G_{r-1}$. Let $B_r$ be
the event that there are no edges within $\partial G_r$.
Clearly, if $A_r$ and $B_r$ hold for all $r = 1, \dots, R$ then
$G_R$ is a tree. In fact, it's easy to see that $A_r$ and $B_r$
are the only events that prevent $\{X_v^=, X_v^\ne\}_{v \in G}$
from determining $(G_R, \sigma_{G_R})$.

\begin{lemma}\label{lem:induction}
If \begin{enumerate}
\item $(T_{r-1}, \tau_{T_{r-1}}) = (G_{r-1}, \sigma_{G_{r-1}})$;
\item $X_u^= = Y_u^=$ and $X_u^\ne = Y_u^\ne$ for every
$u \in \partial G_{r-1}$; and
\item $A_r$ and $B_r$ hold
\end{enumerate}
then $(T_r, \tau_{T_r}) = (G_r, \sigma_{G_r})$.
\end{lemma}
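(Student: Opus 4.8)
The plan is to verify that, under the three hypotheses, both $(T_r,\tau_{T_r})$ and $(G_r,\sigma_{G_r})$ are produced from their radius-$(r-1)$ versions by the \emph{same} deterministic ``attach fresh children'' rule, fed by data that hypothesis~2 forces to coincide; then equality at level $r$ is automatic.

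On the tree side this is immediate: by the definition of the broadcast process together with Poisson thinning, $(T_r,\tau_{T_r})$ is $(T_{r-1},\tau_{T_{r-1}})$ augmented, for each $u\in\partial T_{r-1}$, by $Y_u^{=}$ new leaves with label $\tau_u$ and $Y_u^{\ne}$ new leaves with label $-\tau_u$, each joined only to $u$, with children of distinct parents automatically distinct and no further edges or vertices at level $r$.

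The graph side is where $A_r$ and $B_r$ are used. First, $w\in\partial G_r$ exactly when $w\in V_{r-1}$ and $w$ has a neighbour in $\partial G_{r-1}$ (the distance bounds $d(\rho,w)>r-1$ and $d(\rho,w)\le r$ being forced); under $A_r$ every such $w$ has a \emph{unique} neighbour $v\in\partial G_{r-1}$, so that $\partial G_r=\bigsqcup_{v\in\partial G_{r-1}}\bigl(N(v)\cap V_{r-1}\bigr)$ is a disjoint union --- in particular no two vertices of $\partial G_{r-1}$ share a child, the one collision invisible to the counts $\{X_v^{=},X_v^{\ne}\}$. Second, under $B_r$ there are no edges inside $\partial G_r$, and there are trivially no edges from $\partial G_r$ to $V(G_{r-2})$. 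Hence $(G_r,\sigma_{G_r})$ is $(G_{r-1},\sigma_{G_{r-1}})$ augmented, for each $v\in\partial G_{r-1}$, by the set $N(v)\cap V_{r-1}$ joined only to $v$, of which $X_v^{=}$ carry label $\sigma_v$ and $X_v^{\ne}$ carry label $-\sigma_v$ by definition of these quantities, and nothing else.

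Finally, combine the two descriptions. Hypothesis~1 identifies $(G_{r-1},\sigma_{G_{r-1}})$ with $(T_{r-1},\tau_{T_{r-1}})$, hence $\partial G_{r-1}$ with $\partial T_{r-1}$ and $\sigma_v$ with $\tau_v$ on them; hypothesis~2 then gives $X_v^{=}=Y_v^{=}$ and $X_v^{\ne}=Y_v^{\ne}$ there. So the two ``attach children'' operations have identical inputs, and one may match the new level-$r$ vertices of $G$ to those of $T$ respecting labels, extending the identification and giving $(G_r,\sigma_{G_r})=(T_r,\tau_{T_r})$. The only real work is the bookkeeping in the previous paragraph: checking that $A_r$, $B_r$ and the automatic distance constraints rule out \emph{every} way the local counts could fail to determine $(G_r,\sigma_{G_r})$ --- a level-$r$ vertex with two parents, two vertices of $\partial G_{r-1}$ sharing a child, an edge within level $r$, and an edge from level $r$ back to level $r-2$; once these are excluded the conclusion is a formality.
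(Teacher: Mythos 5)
Your proof is correct and follows essentially the same approach as the paper's: both extend the level-$(r-1)$ identification to level $r$ by attaching children according to the matched counts $X_u^{=},X_u^{\ne}$ and $Y_u^{=},Y_u^{\ne}$, with $A_r$ ruling out shared children/multiple parents and $B_r$ ruling out intra-level edges so that the extended map stays a label-preserving isomorphism. Your write-up is more explicit than the paper (which relies on the homomorphism language and declares the rest ``essentially obvious''), in particular in spelling out that edges from level $r$ back to level $r-2$ are excluded automatically by the distance constraint, but the underlying argument is identical.
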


\begin{proof}
The proof is essentially obvious from the construction
of $X_u$ and $Y_u$, but we will be pedantic about it anyway.
The statement $(T_{r-1}, \tau_{T_{r-1}}) = (G_{r-1} \sigma_{G_{r-1}})$
means that there is some graph homomorphism $\phi: G_{r-1} \to T_{r-1}$
such that $\sigma_u = \tau_{\phi(u)}$. If $u \in \partial G_{r-1}$
and $X_u^= = Y_{\phi(u)}^=$ and $X_u^\ne = Y_{\phi(u)}^\ne$
then we can extend $\phi$ to $G_{r-1} \cup \nbr(u)$ while preserving
the fact that $\sigma_v = \tau_{\phi(v)}$ for all $v$. On the
event $A_r$, this extension can be made simultaneously for all
$u \in \partial G_{r-1}$, while the event $B_r$ ensures that
this extension remains a homomorphism.
Thus, we have constructed a label-preserving homomorphism
from $(G_r, \sigma_{G_r})$ to $(T_r, \tau_{T_r})$, which is
the same as saying that these two labelled graphs are equal.

From now on, we will not mention homomorphisms; we will just
identify $u$ with $\phi(u)$.
\end{proof}

In order to complete our coupling, we need to identify one more
kind of good
event.
Let $C_r$ be the event
\[
C_r = \{|\partial G_s| \le 2^s(a + b)^s \log n \text{ for all }
s \le r + 1\}.
\]
The events $C_r$ are useful because they guarantee that
$V_r$ is large enough for the desired binomial-Poisson approximation
to hold. The utility of $C_r$ is demonstrated by the next two lemmas.

\begin{lemma}\label{lem:C_r}
  For all $r \le R$,
\[\P(C_r | C_{r-1}, \sigma) \ge 1 - n^{-\log(4/e)}.\]
Moreover, $|G_r| = O(n^{1/8})$ on $C_{r-1}$.
\end{lemma}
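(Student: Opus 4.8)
I will dispose of the deterministic ``moreover'' assertion first, since it is immediate. On $C_{r-1}$ one has $|\partial G_s| \le (2(a+b))^s \log n$ for every $s \le r$, so $|G_r| = \sum_{s=0}^{r} |\partial G_s| \le (r+1)(2(a+b))^r \log n$. Since $r \le R \le \frac{\log n}{10\log(2(a+b))}$, we have $(2(a+b))^r \le n^{1/10}$, and because $r+1 = O(\log n)$ this gives $|G_r| = O(n^{1/10}\log^2 n) = O(n^{1/8})$.

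For the probability bound the plan is to expose the ball level by level and condition on the whole labelled ball $G_r$ together with $\sigma$; write $\mathcal F_r$ for the resulting $\sigma$-algebra. Two things have to be arranged. First, $C_{r-1}$ is $\mathcal F_r$-measurable (the spheres $\partial G_s$, $s \le r$, are read off from $G_r$), and $C_r = C_{r-1} \cap \{|\partial G_{r+1}| \le 2^{r+1}(a+b)^{r+1}\log n\}$, so it suffices to bound $\P\big(|\partial G_{r+1}| > 2^{r+1}(a+b)^{r+1}\log n \mid \mathcal F_r\big)$ uniformly over realizations of $G_r$ consistent with $C_{r-1}$ and then take the conditional expectation given $(C_{r-1},\sigma)$. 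Second --- and this is the only point that needs care --- conditionally on $\mathcal F_r$ the edges between $\partial G_r$ and $V \setminus V(G_r)$ are still independent, with $\{v,w\}$ present with probability $a/n$ or $b/n$ according to whether $\sigma_v = \sigma_w$. This is the standard breadth-first exposure observation: revealing $G_r$ as the induced subgraph on the radius-$r$ ball reveals the edges inside $V(G_r)$ and the absence of edges from $V(G_{r-1})$ to the complement of the ball, but says nothing about edges leaving $\partial G_r$. It is worth writing this out explicitly precisely because we are conditioning on the inequality $C_{r-1}$ rather than on an exact ball; routing everything through $\mathcal F_r$ removes that friction.

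Granting the conditional independence, a vertex $w \in V \setminus V(G_r)$ lies in $\partial G_{r+1}$ exactly when it has a neighbor in $\partial G_r$, hence $|\partial G_{r+1}| \le Z := \sum_{v \in \partial G_r} \sum_{w \notin V(G_r)} \mathbf 1\{\{v,w\} \in G\}$, a sum of independent Bernoulli variables of conditional mean at most $|\partial G_r| \cdot n \cdot \frac{a\vee b}{n} \le |\partial G_r|(a+b)$. On $C_{r-1}$ this mean is at most $\mu' := (2(a+b))^r(a+b)\log n = 2^r(a+b)^{r+1}\log n$, and the threshold to beat is exactly $2\mu'$. A standard Chernoff bound for a sum of independent Bernoulli variables whose mean is at most $\mu'$ gives $\P(Z \ge 2\mu') \le (e/4)^{\mu'} = e^{-\mu'\log(4/e)}$; since $a + b > 2$ forces $\mu' \ge (a+b)\log n > 2\log n$, this is at most $n^{-2\log(4/e)} \le n^{-\log(4/e)}$. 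Taking the conditional expectation over $\mathcal F_r$ as above yields $\P(C_r \mid C_{r-1}, \sigma) \ge 1 - n^{-\log(4/e)}$. So the real work is the exposure bookkeeping; the rest is the estimate $(2(a+b))^R \le n^{1/10}$ and a one-line Chernoff computation.
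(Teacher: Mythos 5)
Your proof is correct and follows the same approach as the paper's: bound $|\partial G_{r+1}|$ by a sum of independent Bernoulli variables conditionally on the exposed ball, apply the multiplicative Chernoff bound $\P(Z \ge 2\mu') \le (e/4)^{\mu'}$ with $\mu' = 2^r(a+b)^{r+1}\log n \ge \log n$, and for the size bound sum the geometric growth and use $(2(a+b))^R \le n^{1/10}$. The only difference is that you spell out the breadth-first exposure filtration $\mathcal F_r$ explicitly, whereas the paper invokes stochastic domination directly; the underlying argument is identical.
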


\begin{lemma}\label{lem:A_r-B_r}
For any $r$,
\begin{align*}
\P(A_r | C_{r-1}, \sigma) &\ge 1 - O(n^{-3/4}) \\
\P(B_r | C_{r-1}, \sigma) &\ge 1 - O(n^{-3/4}).
\end{align*}
\end{lemma}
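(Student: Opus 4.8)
The plan is to prove both inequalities by a union bound over the edges that a layer-by-layer breadth-first exploration of $G$ from $\rho$ has not yet revealed. Run the exploration one layer at a time and let $\mathcal{F}_s$ record $\sigma$ together with the outcome of the first $s$ layers, i.e.\ the presence or absence of every edge meeting $V(G_{s-1})$; conditionally on $\mathcal{F}_s$, each edge with both endpoints outside $V(G_{s-1})$ is still an independent Bernoulli which is present with probability $a/n$ or $b/n$ according to $\sigma$, hence with probability at most $p_0 := \max\{a,b\}/n$. The role of the conditioning event $C_{r-1}$ is simply to make the relevant boundary sets small: since $s \le r \le R$ we have $(2(a+b))^s \le n^{1/10}$, so on $C_{r-1}$ every $|\partial G_s|$ with $s \le r$ is at most $n^{1/10}\log n$, and, because $R = O(\log n)$, also $|G_r| = \sum_{s\le r}|\partial G_s| = O(n^{1/8})$ (the bound already recorded in Lemma~\ref{lem:C_r}).

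Consider $B_r$ first. Condition on $\mathcal{F}_r$: this has exposed every edge meeting $V(G_{r-1})$, so it determines the set $\partial G_r$ --- in particular $C_{r-1}$ is $\mathcal{F}_r$-measurable --- but it leaves every edge with both endpoints in $\partial G_r$ unexposed and conditionally independent. A union bound over the $\binom{|\partial G_r|}{2}$ such pairs gives $\P(B_r^c \mid \mathcal{F}_r) \le \binom{|\partial G_r|}{2} p_0$, which on $C_{r-1}$ is $O(n^{1/4}\cdot n^{-1}) = O(n^{-3/4})$; averaging over $C_{r-1}$ (which lies in $\mathcal F_r$) yields the bound for $B_r$.

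For $A_r$, note that a vertex $w \notin V(G_{r-1})$ with two neighbours in $G_{r-1}$ must in fact have two distinct neighbours $u_1,u_2 \in \partial G_{r-1}$. Condition on $\mathcal{F}_{r-1}$, which has not yet exposed any edge leaving $\partial G_{r-1}$; conditionally on $\mathcal{F}_{r-1}$ the edges $wu_1$ and $wu_2$ are independent and each present with probability at most $p_0$, so a union bound over the $\le n$ choices of $w$ and the $\binom{|\partial G_{r-1}|}{2}$ choices of $\{u_1,u_2\}$ gives $\P(A_r^c \mid \mathcal{F}_{r-1}) \le n\binom{|\partial G_{r-1}|}{2}p_0^2 = O(n\cdot n^{1/4}\cdot n^{-2}) = O(n^{-3/4})$ whenever $|\partial G_{r-1}| = O(n^{1/8})$. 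The one wrinkle is that $C_{r-1}$ is not $\mathcal{F}_{r-1}$-measurable (it also constrains $|\partial G_r|$, which has not been exposed), but $C_{r-2} \supseteq C_{r-1}$ is $\mathcal{F}_{r-1}$-measurable and already forces $|\partial G_{r-1}| \le n^{1/10}\log n$; hence $\P(A_r^c \mid C_{r-2},\sigma) = O(n^{-3/4})$, and since $\P(C_{r-1}\mid C_{r-2},\sigma) = 1-o(1)$ by Lemma~\ref{lem:C_r}, also $\P(A_r^c \mid C_{r-1},\sigma) \le \P(A_r^c\mid C_{r-2},\sigma)/\P(C_{r-1}\mid C_{r-2},\sigma) = O(n^{-3/4})$.

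The computations themselves are routine first-moment estimates; the only point requiring care --- and the ``main obstacle'', such as it is --- is to expose the edges in the right order, namely to take the exposure $\sigma$-field one layer coarser than the event being bounded, so that the edges appearing in that event are genuinely fresh given what we condition on, while still having the size bounds from $C_{r-1}$ (or $C_{r-2}$) available to control the number of terms in the union bound.
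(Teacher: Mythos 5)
Your proof is correct and takes essentially the same route as the paper's: a first-moment union bound over pairs/triples, with the size of the relevant boundary sets controlled by the $C$ events (so that $|\partial G_{r-1}|^2, |\partial G_r|^2 = O(n^{1/4})$). You are more careful than the paper about the exposure filtration and about which $C$ event is measurable at the time of conditioning (and about the off-by-one in which boundary layer the candidate neighbours of $w$ live in), but these are refinements of the same argument rather than a different one.
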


\begin{proof}[Proof of Lemma~\ref{lem:C_r}]
First of all, $X_v$ is stochastically dominated by
$\Binom(n, \frac{a + b}{n})$ for any $v$. On
$C_{r-1}$, $|\partial G_{r}| \le 2^r(a+b)^r \log n$ and so
$|\partial G_{r+1}|$ is stochastically dominated by
\[
Z \sim \Binom\Big(2^r(a + b)^r n\log n, \frac{a+b}{n}\Big).
\]
Thus,
\begin{align*}
  \P(\neg C_r \mid C_{r-1}, \sigma) &=
\P\big(|\partial G_{r+1}| > 2^{r+1} (a+b)^{r+1} \log n \big\mid C_{r-1}, \sigma\big) \\
& \le
\P(Z \ge 2 \E Z) \le \left(\frac e4\right)^{\E Z}
\end{align*}
by a multiplicative version of Chernoff's inequality.
But
\[
\E Z = 2^r (a+b)^{r+1} \log n \ge \log n,
\]
which proves the first part of the lemma.

For the second part, on $C_{r-1}$
\[
|G_r| = \sum_{r=1}^R |\partial G_r|
\le \sum_{r=1}^R 2^r(a+b)^r \log n \le (2(a+b))^{R+1}\log n = O(n^{1/8}).
\qedhere
\]
\end{proof}

\begin{proof}[Proof of Lemma~\ref{lem:A_r-B_r}]
For the first claim, fix $u, v \in \partial G_r$. For any $w \in
V_r$, the probability that $(u, w)$ and $(v, w)$ both appear is
$O(n^{-2})$. Now, $|V_r| \le n$ and Lemma~\ref{lem:C_r} implies
that $|\partial G_r|^2 = O(n^{1/4})$. Hence the result follows
from a union bound over all triples $u, v, w$.

For the second part, the probability of having an edge
between any particular $u, v \in \partial G_r$ is $O(n^{-1})$.
Lemma~\ref{lem:C_r} implies that $|\partial G_r|^2 = O(n^{1/4})$
and so the result follows from a union bound over all pairs
$u, v$.
\end{proof}

The final ingredient we need is a bound on the total
variation distance between binomial and Poisson random variables.

\begin{lemma}\label{lem:binom-pois}
If $m$ and $n$ are positive integers then
\[
\Big\|\Binom\Big(m, \frac cn \Big) - \Pois(c)\Big\|_{TV} =
O\Big(\frac{\max\{1, |m-n|\}}{n}\Big).
\]
\end{lemma}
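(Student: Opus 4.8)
The plan is to route through an intermediate Poisson law whose mean matches that of the binomial, namely $\Pois(mc/n)$, and then apply the triangle inequality for total variation:
\[
\Big\|\Binom\Big(m,\tfrac cn\Big) - \Pois(c)\Big\|_{TV}
\le \Big\|\Binom\Big(m,\tfrac cn\Big) - \Pois\Big(\tfrac{mc}{n}\Big)\Big\|_{TV}
+ \Big\|\Pois\Big(\tfrac{mc}{n}\Big) - \Pois(c)\Big\|_{TV}.
\]
Throughout, $c$ is treated as a fixed constant, so the implied constants in $O(\cdot)$ may depend on $c$.

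First I would bound the first term by the classical Poisson approximation for a sum of independent Bernoullis (Le Cam's inequality, equivalently the Stein--Chen method): since $\Binom(m,c/n)$ is a sum of $m$ i.i.d.\ $\mathrm{Bernoulli}(c/n)$ variables,
\[
\Big\|\Binom\Big(m,\tfrac cn\Big) - \Pois\Big(\tfrac{mc}{n}\Big)\Big\|_{TV} \le m\Big(\frac cn\Big)^2 = \frac{c^2 m}{n^2}.
\]
To see that this is $O(\max\{1,|m-n|\}/n)$, split into cases: if $m \le n$ then $c^2 m/n^2 \le c^2/n = O(1/n)$; if $m > n$ then $m - n \ge 1$, and $c^2 m/n^2 \le C(m-n)/n$ for suitable $C = C(c)$ and $n$ large, since $c^2 m/n \le C(m-n)$ holds (trivially when $m \ge 2n$, as then $m-n\ge m/2$; and when $n < m < 2n$ the left side is at most $2c^2$ while $m-n \ge 1$). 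In either case the first term is $O(\max\{1,|m-n|\}/n)$.

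For the second term I would use the standard estimate $\|\Pois(\lambda_1) - \Pois(\lambda_2)\|_{TV} \le |\lambda_1 - \lambda_2|$, which follows from the coupling $\Pois(\lambda_2) \stackrel{d}{=} \Pois(\lambda_1) + \Pois(\lambda_2 - \lambda_1)$ for $\lambda_2 \ge \lambda_1$ together with $1 - e^{-x} \le x$; here $|\lambda_1 - \lambda_2| = |mc/n - c| = c|m-n|/n = O(|m-n|/n)$. Adding the two bounds and absorbing constants proves the lemma.

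Every ingredient is textbook, so there is no genuine obstacle; the only point needing a moment's care is the case analysis showing $c^2 m/n^2$ is dominated by $\max\{1,|m-n|\}/n$ --- in particular, noticing that the $\max$ with $1$ is precisely what covers $m = n$ (where binomial and Poisson still differ at scale $1/n$) as well as the regime in which $m$ exceeds $n$ by a large factor.
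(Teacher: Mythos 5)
Your proposal is correct and follows essentially the same route as the paper: triangle inequality through the intermediate law $\Pois(mc/n)$, the Hodges--Le Cam bound $c^2 m/n^2$ for the binomial-to-Poisson part, and a linear-in-$|\lambda_1-\lambda_2|$ bound for the Poisson-to-Poisson part. The only cosmetic differences are that the paper dispatches the large-$m$ regime up front by assuming $m \le 2n$ (whereas you handle it by case analysis at the end) and bounds the Poisson--Poisson total variation by a direct series computation rather than your cleaner additivity coupling.
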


\begin{proof}
  Assume that $m \le 2n$, or else the result is trivial.
A classical result of Hodges and Le Cam~\cite{HlC} shows that
\[
\Big\|\Binom\Big(m, \frac cn \Big) - \Pois\Big(
\frac{mc}{n}\Big)\Big\|_{TV} \le \frac{c^2 m}{n^2} = O(n^{-1}).
\]
With the triangle inequality in mind, we need only show
that $\Pois(cm/n)$ is close to
$\Pois(c)$. This follows from a direct computation:
if $\lambda < \mu$ then $\big\| \Pois(\lambda)
- \Pois(\mu)\big\|_{TV}$ is just
\[
\sum_{k \ge 0} \frac{|e^{-\mu} \mu^k - e^{-\lambda} \lambda^k|}{k!}
\le
|e^{-\mu} - e^{-\lambda}| \sum_{k \ge 0} \frac{\mu^k}{k!}
+ e^{-\lambda} \sum_{k \ge 0} \frac{|\mu^k - \lambda^k|}{k!}.
\]
Now the first term is $e^{\mu - \lambda} - 1$ and we
can bound $\mu^k - \lambda^k \le k (\mu - \lambda) \mu^{k-1}$
by the mean value theorem. Thus,
\[
\big\| \Pois(\lambda)
- \Pois(\mu)\big\|_{TV}
\le e^{\mu - \lambda} - 1 + e^{\mu - \lambda}(\mu - \lambda)
= O(\mu - \lambda).
\]
The claim follows from setting $\mu = c$ and $\lambda
= \frac{cm}{n}$.
\end{proof}

Finally, we are ready to prove Proposition~\ref{prop:coupling}.

\begin{proof}[Proof of Proposition~\ref{prop:coupling}]
Let $\tilde \Omega$ be the event that $\Big||V^+| - |V^-|\Big| \le
n^{3/4}$.  By Hoeffding's inequality, $\P(\tilde \Omega) \to 1$ exponentially fast.

Fix $r$ and suppose that $C_{r-1}$ and $\tilde \Omega$ hold, and that
$(T_r, \tau_r) = (G_r, \sigma_r)$.
Then for each $u \in \partial G_r$, $X_u^=$
is distributed as $\Binom(|V_r^{\sigma_u}|, a/n)$.
Now,
\[\frac{n}{2} + n^{3/4} \ge |V^{\sigma_{u}}| \ge |V_r^{\sigma_u}|
\ge |V^{\sigma_u}| - |G_{r-1}| \ge \frac{n}{2} - n^{3/4} - O(n^{1/8})\]
and so Lemma~\ref{lem:binom-pois} implies that
we can couple $X_u^=$ with $Y_u^=$ such that $\P(X_u^= \ne Y_u^=) =
O(n^{-1/4})$
(and similarly for $X_u^\ne$ and $Y_u^\ne$).
Since $|\partial G_{r-1}| = O(n^{1/8})$ by Lemma~\ref{lem:C_r},
the union bound implies that
we can find a coupling such that
with probability at least $1 - O(n^{-1/8})$, $X_u^= = Y_u^=$ and
$X_u^\ne = Y_u^\ne$ for every $u \in \partial G_{r-1}$.  Moreover,
Lemmas~\ref{lem:C_r} and~\ref{lem:A_r-B_r} imply
$A_r, B_r$ and $C_r$ hold simultaneously with probability at least $1
- n^{-\log (4/e)} - O(n^{-3/4})$.  Putting these all together, we see that
the hypothesis of Lemma~\ref{lem:induction} holds with probability at
least $1 - O(n^{-1/8})$. Thus,
\[
\P\Big(
(G_{r+1}, \sigma_{r+1}) = (T_{r+1}, \tau_{r+1}), C_r \Big|
(G_r, \sigma_r) = (T_r, \tau_r), C_{r-1}\Big) \ge 1 - O(n^{-1/8}).
\]
But $\P(C_0) = 1$ and we can certainly couple
$(G_1, \sigma_1)$ with $(T_1, \tau_1)$. Therefore,
with a union bound over $r = 1, \dots, R$, we see that
$(G_R, \sigma_R) = (T_R, \tau_R)$
a.a.s.
\end{proof}

\subsection{No long range correlations in $G$}

We have shown that a neighborhood in $G$ looks like a Galton-Watson
tree with a Markov process on it. In this section, we will apply this
fact to prove Theorem~\ref{thm:non-reconstruction}.  In the statement
of Theorem~\ref{thm:non-reconstruction}, we claimed that
$\E(\sigma_\rho | G, \sigma_v) \to 0$, but this is clearly equivalent
to $\Var(\sigma_\rho | G, \sigma_v) \to 1$. This latter
statement is the one that we will prove, because
the conditional variance has a nice monotonicity property.

The idea behind the proof of Theorem~\ref{thm:non-reconstruction}
is to condition on the labels of $\partial
G_R$, which can only make reconstruction easier.  Then we can remove
the conditioning on $\sigma_v$, because $\sigma_{\partial G_R}$ gives
much more information anyway.  Since Theorem~\ref{thm:markov}
and Proposition~\ref{prop:coupling}
imply that $\sigma_v$ cannot be reconstructed from
$\sigma_{\partial G_R}$, we conclude that it cannot
be reconstructed from $\sigma_v$ either.

The goal of this section is to prove that once we have conditioned on
$\sigma_{\partial G_R}$, we can remove the conditioning on
$\sigma_v$. If $\sigma | G$ were distributed according to a Markov
random field, this would be trivial because conditioning on
$\sigma_{\partial G_R}$ would turn $\sigma_v$ and $\sigma_\rho$
independent. For our model, unfortunately, there are weak long-range
interactions. However, these interactions are sufficiently weak that
we can get an asymptotic independence result for separated sets as
long as one of them takes up most of the graph.

In what follows, we say that $X = o(a(n))$ a.a.s.\ if
for every $\epsilon > 0$, $\Pr(|X| \ge \epsilon a(n)) \to 0$
as $n \to \infty$,
and we say that $X = O(a(n))$ a.a.s.\ if
\[
  \limsup_{K\to \infty} \limsup_{n \to \infty} \Pr(|X| \ge K a(n))
  = 0.
\]

\begin{lemma}\label{lem:independence}
Let $A = A(G), B = B(G), C = C(G) \subset V$
be a (random) partition of $V$ such
that $B$ separates $A$ and $C$ in $G$. If $|A \cup B| = o(\sqrt n)$
for a.a.e.\ $G$
\[
\P(\sigma_A | \sigma_{B \cup C}, G) = (1 + o(1))\P(\sigma_A | \sigma_B, G)
\]
for a.a.e.\ $G$ and $\sigma$.
\end{lemma}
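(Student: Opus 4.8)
The plan is to write out the density of $\sigma$ given $G$ explicitly, isolate the non-Markovian correction terms, and show they contribute only a $(1+o(1))$ factor when $|A\cup B|=o(\sqrt n)$. Recall that under $\P_n=\curlyG(n,\frac an,\frac bn)$, conditioned on $G$, the probability of a labelling $\sigma$ is proportional to $\prod_{(u,v)\in E} w(\sigma_u,\sigma_v)\prod_{(u,v)\notin E} w'(\sigma_u,\sigma_v)$, where on an edge $w(+,+)=w(-,-)=p$, $w(+,-)=q$, and on a non-edge $w'(+,+)=w'(-,-)=1-p$, $w'(+,-)=1-q$. Writing $1-p = (1-q)\cdot\frac{1-p}{1-q}$ and $p = q\cdot\frac pq$, the whole weight factors as $(\text{const depending only on }G)\times \prod_{(u,v)\in E}\big(\tfrac pq\big)^{1\{\sigma_u=\sigma_v\}}\times\prod_{(u,v)\notin E}\big(\tfrac{1-p}{1-q}\big)^{1\{\sigma_u=\sigma_v\}}$. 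The edge term is genuinely Markov with respect to $G$ (it only couples neighbours), so it is the non-edge term — an all-pairs interaction of strength $\log\frac{1-p}{1-q} = \Theta(1/n)$ — that obstructs the Markov property. The key point is that this interaction only depends on $\sigma$ through the counts $|V^+|$ and $|V^-|$ (equivalently, through $M:=\sum_u \sigma_u$), since $\sum_{(u,v)\notin E}1\{\sigma_u=\sigma_v\} = \binom{|V^+|}{2}+\binom{|V^-|}{2} - (\text{edges within a class})$, and the within-class edge count is already Markov.

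So the first step is to reduce the statement to a claim about the magnetization: after pulling out the Markov (edge) part, the conditional law of $\sigma$ given $G$ is a Markov random field on $G$ tilted by a function $h(M)$ with $M=\sum_u\sigma_u$; and $h$ is ``slowly varying'' in the sense that $h(M)/h(M') = \exp(O(|M-M'|^2/n) + O(|M-M'|/n))$ (expand the quadratic $\binom{|V^+|}{2}+\binom{|V^-|}{2}$ around $|V^\pm|=n/2$). Now condition on $\sigma_B$ (hence the graph is cut into the $A$-side and the $C$-side, which are conditionally independent under the pure MRF). Under the MRF, $M_A:=\sum_{u\in A}\sigma_u$ concentrates — in fact $|M_A|\le |A| = o(\sqrt n)$ deterministically — so the tilt $h(M_A + M_{C})$, viewed as a function of $\sigma_A$ with $\sigma_{B\cup C}$ (hence $M_C$ and $M_B$) fixed, varies by a factor $\exp(O(|A|\cdot|M_A|/n) + O(|A|^2/n) + O(|A|/n)) = \exp(o(1))$ as $\sigma_A$ ranges over its $\le 2^{|A|}$ values. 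Hence

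\begin{align*}
\P(\sigma_A\mid \sigma_{B\cup C}, G)
&= \frac{\P_{\mathrm{MRF}}(\sigma_A\mid\sigma_B,G)\, h(M_A + M_{B\cup C})}
       {\sum_{\sigma_A'}\P_{\mathrm{MRF}}(\sigma_A'\mid\sigma_B,G)\, h(M_{A'} + M_{B\cup C})} \\
&= (1+o(1))\,\P_{\mathrm{MRF}}(\sigma_A\mid\sigma_B,G),
\end{align*}

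and the identical computation with $C$ replaced by $\emptyset$ gives $\P(\sigma_A\mid\sigma_B,G) = (1+o(1))\P_{\mathrm{MRF}}(\sigma_A\mid\sigma_B,G)$; dividing yields the lemma. Here I used that the MRF part is exactly Markov, so $\P_{\mathrm{MRF}}(\sigma_A\mid\sigma_B,G) = \P_{\mathrm{MRF}}(\sigma_A\mid\sigma_{B\cup C},G)$ since $B$ separates $A$ from $C$.

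The main obstacle I anticipate is controlling the tilt uniformly: the statement ``$h(M)$ varies slowly'' must be made precise as a statement about the \emph{ratio} $h(M_A+M_{C})/h(M_{A'}+M_{C})$ that is uniform over $\sigma_{B\cup C}$ — in particular uniform over the value of $M_C$, which is of order $\sqrt n$ typically but could in principle be as large as $|C|\approx n$. This is where the quadratic structure matters: $\log h(M) = -\frac{1}{n}\cdot\frac{(\log\frac{1-q}{1-p})}{2}M^2 + O(M/n) + \text{const}$ (up to the within-class-edge correction, which is Markov and absorbed), so $\log h(M_A+M_C) - \log h(M_{A'}+M_C) = -\frac{c}{n}\big((M_A-M_{A'})(M_A+M_{A'}) + 2M_C(M_A - M_{A'})\big) + O(|A|/n)$. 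The term $\frac{M_C(M_A-M_{A'})}{n}$ is the dangerous one; it is $o(1)$ precisely when $|M_C|\cdot|A| = o(n)$, i.e. $|A| = o(\sqrt n)$ suffices once we also know $|M_C| = O(\sqrt n)$ a.a.s. — but we want this for a.a.e.\ $G$ and $\sigma$, so we should simply note that on the high-probability event $|M_C| \le n^{3/4}$ (Hoeffding, as in the proof of Proposition~\ref{prop:coupling}) combined with $|A|=o(\sqrt n)$ we get $|M_C|\cdot|A|/n = o(n^{1/4}\cdot n^{-1/4}\cdot\ldots)$ — one must check the exponents close, which they do for $|A\cup B| = o(\sqrt n)$ after sharpening the magnetization bound to $|M| = O(\sqrt{n\log n})$ a.a.s. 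The remaining ingredients — the exact factorization of the density, and the Markov property of the edge part — are routine.
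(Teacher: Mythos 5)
Your decomposition is essentially the paper's, restated in a slightly cleaner form. Where the paper factors $\P(G\mid\sigma)$ into $Q_{A\cup B,A\cup B}\,Q_{B\cup C,C}\,Q_{A,C}$ and shows the cross-term $Q_{A,C}$ (which consists only of non-edge weights, since $B$ separates $A$ and $C$, and depends on $\sigma$ only through the product $s_A s_C$) is approximately constant, you fold the entire non-edge weight into a global tilt $h(M)$ of the Markov random field. The dangerous quantity is the same in both: the cross term, $s_A s_C/n$ for the paper, $M_A M_C/n$ for you. So far so good, and you correctly flag this as the hard spot.

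The gap is in the exponent bookkeeping at the very end, and it is a real one, not a matter of detail. Plugging $|M_C|\le n^{3/4}$ and $|A|=o(\sqrt n)$ gives $|M_C||A|/n = o(n^{1/4})$, and your suggested sharpening to $|M_C|=O(\sqrt{n\log n})$ gives $|M_C||A|/n = o(\sqrt{\log n})$; neither is $o(1)$. There is also an inconsistency earlier in the text: you write the tilt variation as $\exp\bigl(O(|A||M_A|/n)+O(|A|^2/n)+O(|A|/n)\bigr)=\exp(o(1))$, which silently drops the $|A||M_C|/n$ term whose presence you then rediscover two sentences later. No fixed polynomial or polylogarithmic bound on $|M_C|$ can close the argument under the bare hypothesis $|A\cup B|=o(\sqrt n)$, because if $|A|$ approaches $\sqrt n$ then $|M_C|$ must be controlled to within $o(n/|A|)=\omega(\sqrt n)$, an $n$-dependent window.

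The resolution, which is what the paper actually does, is to pick the magnetization cutoff \emph{adaptively}. Since $|A|=o(\sqrt n)$ a.a.s., there is a deterministic $a_n=o(\sqrt n)$ with $|A|\le a_n$ a.a.s.; then $n/a_n=\omega(\sqrt n)$ and one can choose a deterministic sequence $\alpha_n$ with $\sqrt n \ll \alpha_n \ll n/a_n$. The first relation guarantees $\P(|M_C|\le\alpha_n)\to 1$ (since $|M_C|=O_P(\sqrt n)$ by the CLT plus $|M_{A\cup B}|\le a_n=o(\sqrt n)$), and the second guarantees $\alpha_n a_n/n=o(1)$, which is exactly what your cross term needs. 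Replace the attempted fixed bounds with this choice and restrict the ``a.a.e.\ $\sigma$'' quantifier to the event $\{|M_C|\le\alpha_n\}$, and the rest of your argument goes through.
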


Note that Lemma~\ref{lem:independence}
is only true for a.a.e.\ $\sigma$. In particular,
the lemma does not hold for $\sigma$ that are very unbalanced
(eg.\ $\sigma = +^V$).

\begin{proof}
As in the analogous proof for a Markov random field, we factorize
$\P(G, \sigma)$ into parts depending on $A$, $B$ and $C$. We then
show that the part which measures the interaction between $A$ and
$C$ is negligible.
The rest of the proof is then quite similar
to the Markov random fields case.

Define
\[
\psi_{uv}(G, \sigma) =
\begin{cases}
\frac{a}{n} & \text{if $(u, v) \in E(G)$ and $\sigma_u = \sigma_v$} \\
\frac{b}{n} & \text{if $(u, v) \in E(G)$ and $\sigma_u \ne \sigma_v$} \\
1 - \frac{a}{n} & \text{if $(u, v) \not \in E(G)$ and $\sigma_u = \sigma_v$} \\
1 - \frac{b}{n} & \text{if $(u, v) \not \in E(G)$ and $\sigma_u \ne \sigma_v$.}
\end{cases}
\]

For arbitrary subsets $U_1, U_2 \subset V$, define
\[
Q_{U_1,U_2} = Q_{U_1,U_2}(G, \sigma) = \prod_{u \in U_1, v \in U_2}
\psi_{uv}(G, \sigma).
\]
(If $U_1$ and $U_2$ overlap, the product ranges over all unordered
pairs $(u, v)$ with $u \ne v$; that is, if $(u, v)$ is in the product
then $(v, u)$ is not.)
Then
\begin{equation}\label{eq:factorization}
2^n \P(G, \sigma) = \P(G | \sigma)
= Q_{A \cup B, A \cup B} Q_{B \cup C, C} Q_{A, C}.
\end{equation}
First, we will show that $Q_{A,C}$ is essentially independent of $\sigma$.
Take a deterministic sequence $\alpha_n$ with $\alpha_n / \sqrt n \to \infty$
but $\alpha_n |A| = o(n)$ a.a.s.
Define $s_A(\sigma) = \sum_{v \in A} \sigma_v$ and $s_C(\sigma) = \sum_{v \in C} \sigma_v$
and let
\begin{gather*}
  \Omega = \{\tau \in \{\pm\}^V: |s_C(\tau)| \le \alpha_n\} \\
  \Omega_U = \Omega_U(\sigma) = \{\tau \in \{\pm\}^V: \tau_U = \sigma_U \text{ and } |s_C(\tau)| \le \alpha_n\}.
\end{gather*}
By the definition of $\alpha_n$, if $\tau \in \Omega$ then
$|s_A(\tau) s_C(\tau)| \le |A| \alpha_n = o(n)$ a.a.s.
Thus, $\tau \in \Omega$ implies
\begin{align}
Q_{A,C}(G, \tau)
&= \prod_{u \in A, v \in C} \psi_{uv}(G, \tau) \notag \\
&= \Big(1 - \frac an\Big)^{(|A| |C| + s_A(\tau) s_C(\tau))/2}
\Big(1 - \frac bn\Big)^{(|A| |C| - s_A(\tau) s_C(\tau))/2} \notag \\
&= (1 + o(1)) \Big(1 - \frac an\Big)^{|A| |C| / 2}
\Big(1 - \frac bn\Big)^{|A| |C| / 2}\ \text{a.a.s.}
  \label{eq:term-vanish}
\end{align}
where we have used the fact that $u \in A$, $v \in C$ implies
that $(u, v) \not \in E(G)$, and thus $\psi_{uv}$ is either
$1 - \frac an$ or $1 - \frac bn$. Moreover,
$1 - \frac an$ appears once for every pair $(u, v) \in A \times C$
where $\tau_u = \tau_v$. The number of such pairs is
$|A_+| |C_+| + |A_-| |C_-|$ where $A_+ = \{u \in A: \tau_u = +\}$
(and similarly for $C_+$, etc.); it's easy to check, then,
that $2(|A_+| |C_+| + |A_-| |C_-|) = |A| |C| + s_A s_C$, which
explains the exponents in~\eqref{eq:term-vanish}.

Note that the right hand side of~\eqref{eq:term-vanish}
depends on $G$ (through $A(G)$ and $C(G)$) but not on $\tau$.
Writing $2^{-n} K(G)$ for the right hand side of~\eqref{eq:term-vanish},~\eqref{eq:factorization}
implies that if $\tau \in \Omega$ then
\begin{equation}\label{eq:prob-Qa}
  \P(G, \tau) = (1 + o(1)) K(G) Q_{A \cup B,A \cup B}(G, \tau)
Q_{B \cup C,C}(G, \tau)
\end{equation}
for a.a.e.\ $G$.
Moreover, $\alpha_n / \sqrt n \to \infty$
implies that $\sigma \in \Omega$ for a.a.e.\ $\sigma$, and so
for any $U = U(G)$, $\P(\sigma_U, G) = (1 + o(1)) \P(\sigma_U, \sigma \in \Omega, G)$
a.a.s; therefore,
\begin{align}
  \P(\sigma_U, G)
  &= (1 + o(1)) \P(\sigma_U, \sigma \in \Omega, G) \notag \\
  &= (1 + o(1)) \sum_{\tau \in \Omega_U(\sigma)} \P(\tau, G) \notag \\
  &= (1 + o(1)) K(G) \sum_{\tau \in \Omega_U(\sigma)} Q_{A \cup B,A \cup B}(G, \tau)
  Q_{B \cup C,C}(G, \tau) \label{eq:prob-Q}
\end{align}
for a.a.e.\ $G$ and $\sigma$.
(Note that the $o(1)$ term in~\eqref{eq:prob-Qa} depends only on $G$, so there is no problem in
pulling it out of the sum.)
Applying~\eqref{eq:prob-Q} twice, with $U = A \cup B$ and $U = B$,
\begin{align}
\P(\sigma_A | \sigma_B, G)
&= \frac{\P(\sigma_{A \cup B}, G)}{\P(\sigma_B, G)} \notag \\
&= (1 + o(1))
\frac{\sum_{\tau \in \Omega_{A \cup B}}
Q_{A \cup B,A \cup B}(G, \tau) Q_{B\cup C,C}(G, \tau)}
{ \sum_{\tau \in \Omega_B}
Q_{A \cup B, A \cup B}(G, \tau)
Q_{B \cup C,C}(G, \tau)}.
\label{eq:factorizing-1}
\end{align}
Note that $Q_{U_1,U_2}(\tau)$ depends on $\tau$ only through
$\tau_{U_1 \cup U_2}$.
In particular, in the numerator of~\eqref{eq:factorizing-1},
$Q_{A \cup B, A \cup B}(G, \tau)$ doesn't depend on $\tau$
since we only sum over $\tau$ with $\tau_{A \cup B} = \sigma_{A \cup B}$.
Hence, the right hand side of~\eqref{eq:factorizing-1}
is just
\begin{equation}\label{eq:factorizing-2}
  (1 + o(1))\frac{Q_{A \cup B,A \cup B}(G, \sigma)
  \sum_{\tau \in \Omega_{A \cup B}} Q_{B \cup C,C}(G, \tau)}
  {\Big(\sum_{\tau \in \Omega_{B \cup C}} Q_{A \cup B,A \cup B}(G, \tau)\Big)
  \Big(\sum_{\tau \in \Omega_{A \cup B}} Q_{B \cup C,C}(G, \tau)\Big)},
\end{equation}
where we could factorize the denominator because with $\tau_B$ fixed,
$Q_{A \cup B, A \cup B}$
depends only on $\tau_A$, while
$Q_{B \cup C, C}$ depends only on
$\tau_C$.
Cancelling the common terms, then multiplying top and bottom
by $Q_{B \cup C,C}(G, \sigma)$, we have
\begin{align*}
  \eqref{eq:factorizing-2}
  &=
  (1 + o(1))\frac{Q_{A \cup B,A \cup B}(G, \sigma)}
  {\sum_{\tau \in \Omega_{B \cup C}} Q_{A \cup B,A \cup B}(G, \tau)} \\
  &=
  (1 + o(1)) \frac{Q_{A \cup B,A \cup B}(G, \sigma) Q_{B \cup C,C}(G, \sigma)}
  {\sum_{\tau \in \Omega_{B \cup C}} Q_{A \cup B,A \cup B}(G, \tau) Q_{B\cup C,C}(G, \tau)} \\
  &= (1 + o(1)) \frac{\P(G, \sigma)}{\P(G, \sigma_{B \cup C})} \\
  &= (1 + o(1)) \P(\sigma_A | \sigma_{B \cup C}, G) \text{ a.a.s.}
\end{align*}
where the penultimate line used~\eqref{eq:prob-Q} for the denominator
and~\eqref{eq:prob-Qa} (plus the fact that $\sigma \in \Omega$ a.a.s.)
for the numerator.
On the other hand, recall from~\eqref{eq:factorizing-1}
that $\eqref{eq:factorizing-2} = (1 + o(1)) \P(\sigma_A | \sigma_B, G)$
a.a.s.

\end{proof}

\begin{proof}[Proof of Theorem~\ref{thm:non-reconstruction}]
By the monotonicity of conditional
variances,
\[\Var(\sigma_\rho | G, \sigma_v, \sigma_{\partial G_R}) \le
\Var(\sigma_\rho | G, \sigma_v).\]

Since $|G_R| = o(\sqrt n)$
a.a.s.\ and $v \not \in G_R$ a.a.s, it follows
from Lemma~\ref{lem:independence} that $\sigma_v$ and $\sigma_\rho$
are a.a.s.\ conditionally independent given $\sigma_{\partial G_R}$ and $G$. Thus,
$\Var(\sigma_\rho | G, \sigma_v, \sigma_{\partial G_R}) \to
\Var(\sigma_\rho | G, \sigma_{\partial G_R})$.
Now Proposition~\ref{prop:coupling}
implies that $|\Var(\sigma_\rho | G, \sigma_{\partial G_R})
- \Var(\tau_\rho | T, \tau_{\partial T_R})| \to 0$,
but Theorem~\ref{thm:markov} says that
$\Var(\tau_\rho | T, \tau_{\partial T_R}) \to 1 \text{ a.a.s.}$
and so $\Var(\sigma_\rho | G, \sigma_{\partial G_R}) \to 1$
a.a.s.\ also.
\end{proof}

\section{The Second Moment Argument}

In this section, we will prove 
Theorem~\ref{thm:distinguish}. The general direction of this proof was
already described in the introduction, but let's begin here with a
slightly more detailed overview. Recall that $\P'_n$ denotes the
Erd\"os-Renyi model $\curlyG(n, \frac{a+b}{2n})$. The first thing we
will do is to extend $\P'_n$ to be a distribution on labelled graphs.
In order to do this, we only need to describe the conditional
distribution of the label given the graph. We will take
\[
\P'_n(\sigma | G) = \frac{\P_n(G | \sigma)}{Z_n(G)},
\]
where $Z_n(G)$ is the normalization constant for which this is a
probability.  Now, our goal is to show that $\frac{\P_n(G,
  \sigma)}{\P'_n(G, \sigma)}$ is well-behaved; with our
definition of $\P'_n(\sigma | G)$, we have
\[
\frac{\P_n(G, \sigma)}{\P'_n(G, \sigma)}
= \frac{\P_n(\sigma) Z_n(G)}{\P'_n(G)} = 2^{-n} \frac{Z_n(G)}{\P'_n(G)}.
\]
Thus, Theorem~\ref{thm:distinguish} reduces
to the study of the partition function $Z_n(G)$. To do this,
we will use
the small
subgraph conditioning method. This method was developed by Robinson
and Wormald~\cite{RW:92,RW:94} in order to prove that most
$d$-regular graphs are Hamiltonian, but it has since been applied
in many different settings (see the survey~\cite{Wormald} for
a more detailed discussion). Essentially, the method is useful
for studying a sequence $Y_n(G_n)$ of random variables which
are not concentrated around their means, but which become concentrated
when we condition on the number of short cycles that $G_n$
has. Fortunately for us, this method has been developed into
an easily applicable tool, the application of which only requires
the calculation of some joint moments. The formulation below
comes from~\cite{Wormald}, Theorem~4.1.

\begin{theorem}\label{thm:subgraph}
Fix two sequences of probability distributions $\P'_n$ and $\P_n$
on a common sequence of discrete measure spaces, and let $Y_n =
\frac{\P_n}{\P'_n}$ be the density of $\P_n$ with respect to $\P_n$.
Let $\lambda_k > 0$ and $\delta_k \ge -1$ be real numbers.
For each $n$, suppose that there are random variables
$X_k = X_k(n) \in \N$ for $k \ge 3$ such that
\begin{enumerate}[(a)]
\item For each fixed $m \ge 1$, $\{X_k(n)\}_{k=3}^m$ converge jointly
under $\P'_n$ to independent Poisson variables with means $\lambda_k$;

\item For every $j_1, \dots, j_m \in \N$,
\[
  \frac{\E_{\P'_n} \big(Y_n [X_3(n)]_{j_1} \cdots [X_m(n)]_{j_m}\big)}{\E_{\P'_n} Y_n}
\to \prod_{k=3}^m (\lambda_k(1+\delta_k))^{j_k};
\]

\item \[\sum_{k \ge 3} \lambda_k \delta_k^2 < \infty; \]
\item \[
    \frac{\E_{\P'_n} Y_n^2}{(\E_{\P'_n} Y_n)^2} \to
\exp\left( \sum_{k \ge 3} \lambda_k \delta_k^2\right).
\]
\end{enumerate}

Then $\P'_n$ and $\P_n$ are contiguous.
\end{theorem}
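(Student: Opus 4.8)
The plan is to reduce mutual contiguity to one distributional limit and then read off both directions from soft arguments; hypotheses (a) and (b) will pin down the limiting law, while (c) and (d) will control an $L^2$ tail. Let $Z_3,Z_4,\dots$ be independent with $Z_k\sim\Pois(\lambda_k)$ and set
\[
W_m=\prod_{k=3}^m(1+\delta_k)^{Z_k}\,e^{-\lambda_k\delta_k},\qquad W=\lim_{m\to\infty}W_m.
\]
A one-line computation with the Poisson generating function gives $\E W_m=1$ and $\E W_m^2=\exp\Big(\sum_{k=3}^m\lambda_k\delta_k^2\Big)$, so by hypothesis (c) $(W_m)_m$ is a nonnegative $L^2$-bounded martingale; hence $W$ exists almost surely and in $L^2$, with $\E W=1$ and $\E W^2=\exp\Big(\sum_{k\ge3}\lambda_k\delta_k^2\Big)<\infty$. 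In the case $\delta_k>-1$ for every $k$ --- the only case in which the conclusion can hold, since if $\delta_k=-1$ then $X_k(n)\to 0$ in $\P_n$-probability but not in $\P'_n$-probability --- the same three-series estimate shows $W>0$ almost surely. Everything reduces to the single claim that, under $\P'_n$,
\[
Y_n\toD W\qquad\text{and}\qquad\E_{\P'_n}Y_n^2\to\E W^2.
\]

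Granting this, contiguity follows. For $\P_n\triangleleft\P'_n$: since $Y_n\ge0$, $Y_n\toD W$, and $\E_{\P'_n}Y_n=1=\E W$ ($Y_n$ being a density), the family $\{Y_n\}$ is uniformly integrable under $\P'_n$, so if $\P'_n(A_n)\to0$ then $\P_n(A_n)=\E_{\P'_n}[Y_n1_{A_n}]\le M\,\P'_n(A_n)+\E_{\P'_n}[Y_n1_{\{Y_n>M\}}]$, and letting $n\to\infty$ then $M\to\infty$ gives $\P_n(A_n)\to0$. For $\P'_n\triangleleft\P_n$: if $\P_n(A_n)\to0$ then for a continuity point $\epsilon$ of the law of $W$, $\P'_n(A_n)\le\tfrac1\epsilon\E_{\P'_n}[Y_n1_{A_n}]+\P'_n(Y_n\le\epsilon)=\tfrac1\epsilon\P_n(A_n)+\P'_n(Y_n\le\epsilon)$; the first term vanishes and the second converges to $\P(W\le\epsilon)$, which is as small as we wish because $W>0$ almost surely.

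It remains to prove the claim, and the engine is the $L^2$ projection of $Y_n$ onto $\sigma(X_3(n),\dots,X_m(n))$: writing $P_mY_n=\E_{\P'_n}[Y_n\mid X_3(n),\dots,X_m(n)]$, we have $\E_{\P'_n}Y_n^2=\E_{\P'_n}(P_mY_n)^2+\E_{\P'_n}(Y_n-P_mY_n)^2$. First identify the limit of $P_mY_n$: by (a), $(X_3(n),\dots,X_m(n))\toD(Z_3,\dots,Z_m)$, and $P_mY_n$ is a function of these coordinates with $\E_{\P'_n}(P_mY_n)^2\le\E_{\P'_n}Y_n^2$ bounded, so any subsequential distributional limit of the pair has the form $(Z_{\le m},V)$ with $V$ measurable with respect to $\sigma(Z_{\le m})$. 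Hypothesis (b) says exactly that $\E_{\P'_n}\big(Y_n[X_3]_{j_1}\cdots[X_m]_{j_m}\big)\to\prod_{k=3}^m(\lambda_k(1+\delta_k))^{j_k}$, which (again by the Poisson generating function) equals $\E\big(W_m[Z_3]_{j_1}\cdots[Z_m]_{j_m}\big)$; since falling-factorial products span the polynomials and the Charlier polynomials form a complete orthogonal system in $L^2$ of the product-Poisson law, matching all of these forces $V=W_m$. Thus $P_mY_n\toD W_m$ for each fixed $m$. Now invoke (d): $\E_{\P'_n}Y_n^2\to\E W^2$ by hypothesis, while $\liminf_n\E_{\P'_n}(P_mY_n)^2\ge\E W_m^2$ by lower semicontinuity of the second moment along $P_mY_n\toD W_m$, whence
\[
\limsup_{n\to\infty}\E_{\P'_n}\big(Y_n-P_mY_n\big)^2\le\E W^2-\E W_m^2=\exp\Big(\sum_{k\ge3}\lambda_k\delta_k^2\Big)-\exp\Big(\sum_{k=3}^m\lambda_k\delta_k^2\Big),
\]
which tends to $0$ as $m\to\infty$ by (c). Combining this with the $L^2$-convergence $W_m\to W$ (a projection identity) and $P_mY_n\toD W_m$ in a three-term estimate --- approximate $Y_n$ by $P_mY_n$ in $L^2$, apply $P_mY_n\toD W_m$, approximate $W_m$ by $W$ in $L^2$ --- gives $\E f(Y_n)\to\E f(W)$ for every bounded Lipschitz $f$, i.e.\ $Y_n\toD W$; the second-moment convergence is hypothesis (d) itself.

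The step I expect to be the main obstacle is the identification $P_mY_n\toD W_m$: upgrading the convergence of the mixed factorial moments in (b) to honest distributional convergence requires both that the limiting moments grow slowly enough to determine a law and that $\{P_mY_n\}$ not leak mass when integrated against the unbounded polynomials in the $X_k$. What makes it work is the combination of the light Poisson tails from (a) with the uniform $L^2$-boundedness of $P_mY_n$ inherited from (d). The remaining ingredients --- convergence and positivity of the infinite product $W$, and the two soft contiguity deductions --- are routine.
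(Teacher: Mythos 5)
The paper does not prove Theorem~\ref{thm:subgraph}; it invokes it as a known tool, citing Theorem~4.1 of Wormald's survey (itself going back to Robinson--Wormald and Janson), so there is no in-paper proof to compare against. Your outline is the standard Janson-style argument: build the $L^2$-bounded martingale $W_m$ and its limit $W$, show $W>0$ a.s., identify $W$ as the distributional limit of $Y_n$ via the projections $P_m Y_n$, and deduce contiguity from $Y_n\toD W$ together with the two soft lemmas. That architecture is correct, and you are also right that the hypothesis as printed, $\delta_k\ge -1$, must really be $\delta_k>-1$: if some $\delta_k=-1$ then $\P(W=0)\ge 1-e^{-\lambda_k}>0$ and contiguity genuinely fails. (Two small fixes: $W>0$ a.s.\ is cleanest via the $0$--$1$ law --- $\{W=0\}$ is a tail event in the $Z_k$'s and $\E W=1>0$ --- rather than a three-series estimate, and the second soft lemma needs $\epsilon$ to be a continuity point, which you do note.)

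The step you flag as the ``main obstacle'' is in fact a real gap that your sketch does not close. To get $P_m Y_n\toD W_m$ you want to pass the limit $\E_{\P'_n}\big[(P_m Y_n)\prod_k[X_k]_{j_k}\big]\to\E\big[V\prod_k[Z_k]_{j_k}\big]$ along a weakly convergent subsequence, and this requires uniform integrability of the random variables $(P_m Y_n)\prod_k[X_k]_{j_k}$. Hypothesis~(d) gives a uniform bound on $\E_{\P'_n}(P_m Y_n)^2$, but hypothesis~(a) gives only \emph{distributional} convergence of the $X_k(n)$ and says nothing about their moments, so nothing in the stated hypotheses bounds $\E_{\P'_n}\big(\prod_k[X_k]_{j_k}\big)^p$ for any $p>1$; ``light Poisson tails'' is a property of the limit $Z_k$, not of the prelimit $X_k(n)$. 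In the concrete application (cycle counts in sparse random graphs) this is not an issue because one has explicit factorial-moment control, but as a self-contained proof of the abstract theorem the argument needs either a moment-convergence strengthening of~(a), or the alternative route of Janson and Wormald, which works directly with the conditional expectations $\E_{\P'_n}[Y_n\mid X_3=j_3,\dots,X_m=j_m]$ on the (discrete, Poisson-tight) event algebra and avoids integrating $P_mY_n$ against unbounded polynomials. With that repair, the rest of your argument goes through.
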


In our application of Theorem~\ref{thm:subgraph} the discussion
at the beginning of this section implies that
$Y_n = Y_n(G) = 2^{-n} \frac{Z_n(G)}{\P'_n(G)}$.
We will take $X_k(n)$
to be the number of $k$-cycles in $G_n$. Thus, condition (a)
in Theorem~\ref{thm:subgraph} is already well-known, with
$\lambda_k = \frac{1}{2k} \big(\frac{a+b}{2}\big)^k$.
This leaves us with three conditions to check. We will start with (d),
but before we do so, let us fix some notation.

Let $\sigma$ and $\tau$ be two labellings in $\{\pm \}^n$.
We will also omit the subscript $n$ in $\P_n$ and $\P'_n$,
and when we write $\prod_{(u, v)}$, we mean that $u$ and $v$ range
over all unordered pairs of distinct vertices $u, v \in G$.
Let $t$ (for ``threshold'') be defined by
$t = \frac{(a-b)^2}{2(a+b)}$.

For the rest of this section, $G \sim \P'$. Therefore
we will drop the $\P'$ from $\E_{\P'}$ and just write $\E$.

\subsection{The first two moments of $Y_n$}

Since $Y_n = \frac{\P(G, \sigma)}
{\P'(G, \sigma)}$, $\E Y_n = 1$ trivially. Let's do a short computation
to double-check it, though, because it will be useful later. Define
\[
W_{uv} = W_{uv}(G, \sigma) =
\begin{cases}
\frac{2a}{a+b} & \text{if $\sigma_u = \sigma_v, \quad  (u, v) \in E$} \\
\frac{2b}{a+b} & \text{if $\sigma_u \neq \sigma_v,\quad (u, v) \in E$} \\
\frac{n-a}{n - (a+b)/2} & \text{if $\sigma_u = \sigma_v, \quad (u, v) \notin E$} \\
\frac{n-b}{n - (a+b)/2} & \text{if $\sigma_u \neq \sigma_v, \quad (u, v) \notin E$}
\end{cases}
\]
and define $V_{uv}$ by the same formula, but with $\sigma$
replaced by $\tau$. Then
\[
Y_n = 2^{-n} \sum_{\sigma \in \{\pm\}^n} \prod_{(u, v)} W_{uv}
\]
and
\[
Y_n^2 = 2^{-2n} \sum_{\sigma, \tau \in \{\pm\}^n} \prod_{(u, v)}
W_{uv} V_{uv}.
\]
Since $\{W_{uv}\}_{(u, v)}$ are independent given $\sigma$,
it follows that
\begin{equation}\label{eq:first-mom}
\E Y_n = 2^{-n} \sum_{\sigma \in \{\pm\}^n} \prod_{(u, v)}
\E W_{uv}
\end{equation}
and
\begin{equation}\label{eq:second-mom}
\E Y_n^2 = 2^{-2n} \sum_{\sigma, \tau \in \{\pm\}^n} \prod_{(u, v)}
\E W_{uv} V_{uv}.
\end{equation}

Thus, to compute $\E Y_n$, we should compute $\E W_{uv}$,
while computing $\E Y_n^2$ involves computing $\E W_{uv} V_{uv}$.

\begin{lemma}\label{lem:first-mom}
For any fixed $\sigma$,
\[
\E W_{uv}(G, \sigma) = 1.
\]
\end{lemma}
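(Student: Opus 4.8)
The plan is to prove Lemma~\ref{lem:first-mom} by direct computation, splitting into the two cases $\sigma_u = \sigma_v$ and $\sigma_u \neq \sigma_v$. The key observation is that, since $G \sim \P'$, the edge $(u,v)$ is present with probability $\frac{a+b}{2n}$ independently of everything else, and $W_{uv}$ was defined precisely so that it equals the likelihood ratio of the edge-indicator $1_{\{(u,v)\in E\}}$ under $\P(\cdot\mid\sigma)$ versus $\P'$; hence its $\P'$-expectation should be $1$, which is what we verify by hand.

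First I would consider the case $\sigma_u = \sigma_v$. Here $W_{uv}$ takes the value $\frac{2a}{a+b}$ with probability $\frac{a+b}{2n}$ and the value $\frac{n-a}{n-(a+b)/2}$ with probability $1 - \frac{a+b}{2n} = \frac{n-(a+b)/2}{n}$. Multiplying out,
\[
\E W_{uv} = \frac{a+b}{2n}\cdot\frac{2a}{a+b} + \frac{n-(a+b)/2}{n}\cdot\frac{n-a}{n-(a+b)/2} = \frac{a}{n} + \frac{n-a}{n} = 1.
\]
The case $\sigma_u \neq \sigma_v$ is identical with $a$ replaced by $b$ in the two relevant branches of the definition of $W_{uv}$: we get $\E W_{uv} = \frac bn + \frac{n-b}{n} = 1$. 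Since these are the only two possibilities, the claim follows for every fixed $\sigma$ (and every pair $(u,v)$).

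There is no real obstacle here; the only thing to be careful about is matching each branch of the piecewise definition of $W_{uv}$ with the correct edge-presence probability under $\P'$ (namely $\frac{a+b}{2n}$, not $\frac an$ or $\frac bn$), and simplifying $1 - \frac{a+b}{2n} = \frac{n-(a+b)/2}{n}$ so that the denominators $n-(a+b)/2$ cancel. This lemma is the $m=1$, single-pair base case; the substantive work is in the analogous computation of $\E W_{uv}V_{uv}$ for the second moment, which will be handled separately.
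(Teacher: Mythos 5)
Your proof is correct and takes essentially the same approach as the paper: a two-case direct computation using $\P'((u,v)\in E)=\frac{a+b}{2n}$ and the piecewise definition of $W_{uv}$, with the denominators $n-(a+b)/2$ cancelling. The framing of $W_{uv}$ as a likelihood ratio is a nice interpretive remark but the computation is the same.
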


\begin{proof}
Suppose $\sigma_u = \sigma_v$. Then $\P'((u, v) \in E) = \frac{a+b}{2n}$, so
\[
\E W_{uv} = \frac{2a}{a+b} \cdot \frac{a+b}{2n}
+ \frac{n-a}{n-(a+b)/2} \cdot \left(1 - \frac{a+b}{2n}\right)
= \frac{a}{n} + 1 - \frac{a}{n} = 1.
\]
The case for $\sigma_u \neq \sigma_v$ is similar.
\end{proof}

Notwithstanding that computing $\E Y_n$ is trivial
anyway, Lemma~\ref{lem:first-mom} and~\eqref{eq:first-mom}
together imply that $\E Y_n = 1$. Let us now move on to the
second moment.

\begin{lemma}\label{lem:cross-moments}
If $\sigma_u \sigma_v \tau_u \tau_v = \plus$ then
\[
\E W_{uv} V_{uv} = 1 + \frac 1n \cdot \frac{(a-b)^2}{2(a+b)}
+ \frac{(a-b)^2}{4n^2} + O(n^{-3}).
\]
If $\sigma_u \sigma_v \tau_u \tau_v = \minus$ then
\[
\E W_{uv} V_{uv} = 1 - \frac 1n \cdot \frac{(a-b)^2}{2(a+b)} -
\frac{(a-b)^2}{4n^2} + O(n^{-3}).
\]
\end{lemma}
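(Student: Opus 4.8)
The plan is to compute $\E W_{uv}V_{uv}$ directly by conditioning on whether the edge $(u,v)$ is present in $G \sim \P'$, exactly as in the proof of Lemma~\ref{lem:first-mom}, but now keeping the quadratic terms. The key observation is that $W_{uv}$ depends on $\sigma$ only through the indicator $\mathbbm{1}[\sigma_u = \sigma_v]$, and similarly $V_{uv}$ through $\mathbbm{1}[\tau_u = \tau_v]$; moreover the product $\sigma_u\sigma_v\tau_u\tau_v$ being $+$ or $-$ determines whether the ``agreement patterns'' of $\sigma$ and $\tau$ on $\{u,v\}$ coincide (both edges agree or both disagree) or are opposite. So there are really only two cases to handle, matching the two cases in the statement.

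Concretely, first I would record the four possible values of the pair $(W_{uv}, V_{uv})$. Writing $p = \frac{a+b}{2n}$ for the edge probability under $\P'$, when $(u,v) \in E$ (probability $p$) we have $W_{uv} \in \{\frac{2a}{a+b}, \frac{2b}{a+b}\}$ according to whether $\sigma_u = \sigma_v$; when $(u,v) \notin E$ (probability $1-p$) we have $W_{uv} \in \{\frac{n-a}{n-(a+b)/2}, \frac{n-b}{n-(a+b)/2}\}$. The case $\sigma_u\sigma_v\tau_u\tau_v = +$ splits into the sub-case $\sigma_u=\sigma_v, \tau_u=\tau_v$ and the sub-case $\sigma_u\neq\sigma_v, \tau_u\neq\tau_v$; I would compute $\E W_{uv}V_{uv}$ in each and check they agree (they must, by symmetry of the formula under $a\leftrightarrow b$ applied to both coordinates). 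For instance in the first sub-case,
\[
\E W_{uv}V_{uv} = p\left(\frac{2a}{a+b}\right)^2 + (1-p)\left(\frac{n-a}{n-(a+b)/2}\right)^2,
\]
and similarly for $\sigma_u\sigma_v\tau_u\tau_v = -$ one gets the ``mixed'' product with one factor of type $a$ and one of type $b$. Then it is just Taylor expansion in $1/n$: substitute $p = \frac{a+b}{2n}$, expand $\big(n-a\big)^2/\big(n-(a+b)/2\big)^2 = 1 - \frac{2a-(a+b)}{n} + \cdots = 1 - \frac{a-b}{n} + O(n^{-2})$ and collect terms through order $n^{-2}$, absorbing everything smaller into $O(n^{-3})$.

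The only mild subtlety — and the step I'd be most careful with — is making sure the $n^{-2}$ coefficient comes out as $\frac{(a-b)^2}{4}$ with the stated sign, because several $O(n^{-2})$ contributions (from expanding the ``non-edge'' factors to second order, from the cross term $p$ times the leading part of the edge factors, and from $(1-p)$ times the second-order part of the non-edge factor) must be combined. A clean bookkeeping device is to write each factor as $1 + \frac{c}{n} + \frac{d}{n^2} + O(n^{-3})$: the edge contribution is $p$ times a product of two bounded quantities, hence contributes $\frac{a+b}{2n}\big(1 + O(1/n)\big)\cdot(\text{ratio of }a,b\text{ terms})$, while the non-edge contribution is $\big(1 - \frac{a+b}{2n}\big)$ times the squared/mixed non-edge factors. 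Summing and matching the linear term against the already-known value $\frac{1}{n}\cdot\frac{(a-b)^2}{2(a+b)}$ from Lemma~\ref{lem:cross-moments}'s statement serves as a useful consistency check before pinning down the quadratic term. The case $\sigma_u\sigma_v\tau_u\tau_v = -$ is obtained by the same computation with the roles so that exactly one of the two factors in each product gets the ``opposite'' label assignment, which flips the sign of the linear and quadratic corrections while leaving the $O(n^{-3})$ error; I would simply note this symmetry rather than redo the algebra.
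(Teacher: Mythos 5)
Your proposal is correct and takes essentially the same approach as the paper: condition on whether the edge $(u,v)$ is present under $\P'$, write out the four possible $(W_{uv},V_{uv})$ values, and Taylor-expand in $1/n$. The only place you cut a corner is the final claim that the $\sigma_u\sigma_v\tau_u\tau_v=-$ case follows ``by symmetry'' with a sign flip of the linear and quadratic corrections; that is not a literal symmetry of the two expressions (the paper redoes the algebra for the mixed case), though it \emph{can} be made rigorous by writing $W_{uv}=1+\eta^W_{uv}$ with $\E\eta^W_{uv}=0$ from Lemma~\ref{lem:first-mom}, observing that $\eta^W_{uv}$ changes sign exactly when $\sigma_u\sigma_v$ does (in both the edge and non-edge branches), and then using $\E W_{uv}V_{uv}=1+\E\eta^W_{uv}\eta^V_{uv}$.
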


\begin{proof}
Suppose $\sigma_u \sigma_v = \tau_u \tau_v = +1$. Then
\begin{align*}
\E W_{uv} V_{uv}
&= \left(\frac{2a}{a+b}\right)^2 \cdot \frac{a+b}{2n}
+ \left(\frac{n - a}{n - (a+b)/2}\right)^2 \cdot \left(1 - \frac{a+b}{2n}\right) \\
&= \frac{2a^2}{n(a+b)} + \frac{(1 - \frac an )^2}{1 - \frac{a+b}{2n}} \\
&= \frac{2a^2}{n(a+b)}
+ \left(1 - \frac an \right)^2
  \left(1 + \frac{a+b}{2n} + \frac{(a+b)^2}{4n^2} + O(n^{-3})\right) \\
&= 1 + \frac 1n \cdot \frac{(a-b)^2}{2(a+b)}
+ \frac{(a-b)^2}{4n^2} + O(n^{-3}).
\end{align*}
The computation for $\sigma_u \sigma_v = \tau_u \tau_v = -1$ is analogous.

Now assume $\sigma_u \sigma_v = +1$ while $\tau_u \tau_v = -1$. By a very similar computation,
\begin{align*}
\E W_{uv} V_{uv}
&= \frac{4ab}{(a+b)^2} \cdot \frac{a+b}{2n}
+ \frac{(1 - \frac an) (1 - \frac bn)}{(1 - \frac{a+b}{2n})^2}
\left(1 - \frac{a+b}{2n}\right) \\
&= 1 - \frac 1n \cdot \frac{(a-b)^2}{2(a+b)} -
\frac{(a-b)^2}{4n^2} + O(n^{-3}).
\end{align*}
The computation for $\sigma_u \sigma_v = -1, \tau_u \tau_v = +1$ is analogous.
\end{proof}

Given what we said just before Lemma~\ref{lem:first-mom},
we can now compute $\E Y_n^2$ just by looking at the number of $(u,v)$ where 
$\sigma_u \sigma_v \tau_u \tau_v = \pm 1$. 
To make this easier,
we introduce another parameter,
$\rho = \rho(\sigma, \tau) = \frac{1}{n} \sum_i \sigma_i \tau_i$.
Writing $s_{\pm}$ for the number of $\{u,v\}$ with $u \neq v$ for which $\sigma_u \sigma_v \tau_u \tau_v = \pm$ we get: 
\[
\rho^2 = n^{-1} + 2 n^{-2} \sum_{u \neq v} \sigma_u \sigma_v \tau_u \tau_v = n^{-1} + 2 n^{-2} (s_+ - s_-)
\]
Since we also have $2 n^{-2}(s_+ + s_-) = 1-n^{-1}$, we obtain 
\begin{align*}
s_+
= (1+\rho^2) \frac{n^2}{4} - \frac{n}{2}, \quad 
s_-
= (1-\rho^2) \frac{n^2}{4}.
\end{align*}

\begin{lemma}\label{lem:second-mom}
\[
\E Y_n^2
= (1 + o(1)) \frac{e^{-t/2 - t^2/4} }{\sqrt{1-t}}.
\]
\end{lemma}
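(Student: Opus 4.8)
The plan is to turn the double sum \eqref{eq:second-mom} into a one-dimensional Laplace-type sum in the overlap $\rho$, and then read off the constant from a Gaussian integral.

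\textbf{Step 1: reduction to a sum over $\rho$.} By Lemma~\ref{lem:cross-moments}, for fixed $\sigma,\tau$ the factor $\E W_{uv}V_{uv}$ takes one of two values according to whether $\sigma_u\sigma_v\tau_u\tau_v=1$ or $-1$; hence $\prod_{(u,v)}\E W_{uv}V_{uv}$ depends on $(\sigma,\tau)$ only through $s_+$ and $s_-$, and so — by the displayed formulas for $s_\pm$ — only through $\rho=\rho(\sigma,\tau)$. Call this product $\Pi_n(\rho)$. For each admissible value $\rho\in\{-1,-1+\tfrac2n,\dots,1\}$ there are exactly $2^n\binom{n}{(1+\rho)n/2}$ pairs $(\sigma,\tau)$ with that overlap (choose $\sigma$ freely, then the $\tfrac{(1+\rho)n}{2}$ coordinates on which $\tau=\sigma$), so
\[
\E Y_n^2 \;=\; 2^{-n}\sum_{\rho}\binom{n}{(1+\rho)n/2}\,\Pi_n(\rho).
\]

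\textbf{Step 2: expand $\log\Pi_n(\rho)$.} Writing $\E W_{uv}V_{uv}=1\pm\varepsilon_\pm$ on the $s_\pm$ pairs with $\varepsilon_\pm=\tfrac tn+\tfrac{(a-b)^2}{4n^2}+O(n^{-3})$, using $\log(1+x)=x-\tfrac{x^2}{2}+O(x^3)$, and substituting $s_+-s_-=\tfrac{n^2}{2}\rho^2-\tfrac n2$ and $s_++s_-=\tfrac{n^2}{2}-\tfrac n2$, one obtains
\[
\log\Pi_n(\rho)\;=\;\frac{nt}{2}\rho^2-\frac t2-\frac{t^2}{4}+\frac{(a-b)^2}{8}\rho^2+O(n^{-1}),
\]
where the $O(n^{-1})$ is uniform in $\rho$ (every remainder is $O((s_++s_-)\varepsilon_\pm^3)=O(n^{-1})$). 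In particular, for $|\rho|\le n^{-1/4}$ the term $\tfrac{(a-b)^2}{8}\rho^2$ is $o(1)$, and for all $\rho$ one has $\Pi_n(\rho)\le C\,e^{nt\rho^2/2}$ for a constant $C=C(a,b)$.

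\textbf{Step 3: main term and tail.} Split the sum at $|\rho|=n^{-1/4}$. On $|\rho|\le n^{-1/4}$ the local central limit estimate gives $2^{-n}\binom{n}{(1+\rho)n/2}=(1+o(1))\sqrt{2/(\pi n)}\,e^{-n\rho^2/2}$ uniformly, so this part of the sum equals $(1+o(1))\,e^{-t/2-t^2/4}\,\sqrt{\tfrac{2}{\pi n}}\sum_{|\rho|\le n^{-1/4}}e^{-n(1-t)\rho^2/2}$. Here we use the hypothesis $(a-b)^2<2(a+b)$, i.e.\ $t<1$, so the exponent is negative definite; substituting $u=\sqrt n\,\rho$ exhibits the sum as a Riemann sum with mesh $2/\sqrt n$ over a window growing to $\R$, whence it is $(1+o(1))\tfrac{\sqrt n}{2}\int_{\R}e^{-(1-t)u^2/2}\,du=(1+o(1))\tfrac{\sqrt n}{2}\sqrt{2\pi/(1-t)}$; multiplying by $\sqrt{2/(\pi n)}$ leaves exactly $1/\sqrt{1-t}$. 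For the tail $|\rho|>n^{-1/4}$, the entropy bound $2^{-n}\binom{n}{(1+\rho)n/2}\le e^{-n\rho^2/2}$ together with $\Pi_n(\rho)\le C e^{nt\rho^2/2}$ gives a total of at most $C(n+1)e^{-\sqrt n(1-t)/2}\to0$. Combining the two parts yields $\E Y_n^2=(1+o(1))\tfrac{e^{-t/2-t^2/4}}{\sqrt{1-t}}$.

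\textbf{Where the difficulty lies.} There is no single deep step; the work is in (i) noticing that $\Pi_n$ depends on $(\sigma,\tau)$ only through $\rho$, which collapses the computation to one dimension, and (ii) keeping the Stirling/local-CLT estimate and the remainder in Step~2 uniform over the relevant range of $\rho$, so that the passage from the Riemann sum to the Gaussian integral is legitimate and the tail is genuinely negligible. It should be stressed that $t<1$ enters essentially: when $t\ge1$ the same computation shows the sum diverges, which is the orthogonality regime of Theorem~\ref{thm:cycles}.
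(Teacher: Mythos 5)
Your Steps 1--2 are essentially the paper's calculation: the reduction through $s_\pm$ to a function of the overlap $\rho$, and the expansion of the product to $\exp\bigl(\tfrac{nt}{2}\rho^2 - \tfrac t2 - \tfrac{t^2}{4} + o(1)\bigr)$, match Lemma~\ref{lem:cross-moments} and the displayed computation in the paper's proof. Where you differ is Step~3. The paper finishes probabilistically: it recognizes $2^{-2n}\sum_{\sigma,\tau}\exp(\cdot)$ as $\E\exp\bigl(\tfrac{s}{2}Z_n^2\bigr)$ with $Z_n=\rho\sqrt n$ a normalized Rademacher sum, and invokes the CLT together with a one-line uniform-integrability bound from Hoeffding (Lemma~\ref{lem:sub-lemma}) to get $1/\sqrt{1-s}$. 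You instead use Stirling/local CLT for the binomial weight, pass from a Riemann sum to a Gaussian integral, and control the tail by hand. Both are legitimate; the paper's packaging is cleaner because dominated convergence absorbs all the uniformity bookkeeping automatically, whereas your explicit route has to track the range over which the local CLT estimate is uniform.

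On that last point there is a small but genuine slip: the claim that $2^{-n}\binom{n}{(1+\rho)n/2}=(1+o(1))\sqrt{2/(\pi n)}\,e^{-n\rho^2/2}$ \emph{uniformly} over $|\rho|\le n^{-1/4}$ is false. Stirling gives an extra factor $\exp\bigl(-\tfrac{n\rho^4}{12}+O(n\rho^6)\bigr)(1-\rho^2)^{-1/2}$, and at $|\rho|=n^{-1/4}$ the term $n\rho^4/12$ is a constant, not $o(1)$. The fix is easy and does not change anything downstream: either cut at $|\rho|\le n^{-1/3}$ (where $n\rho^4\to0$ and the uniformity holds), or note that the extra factor is $\le 1$ so the displayed estimate is an upper bound, while the lower-bound error is confined to the region where $e^{-n(1-t)\rho^2/2}$ is already exponentially small. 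Your tail bound for $|\rho|>n^{-1/4}$ survives the change of cutoff since $(1-t)n^{1/3}\to\infty$. With that correction the argument is sound; as you note, $t<1$ is exactly what makes the Gaussian integral finite and the tail negligible.
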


Before we proceed to the proof, recall (or check, by writing
out the Taylor series of the logarithm) that
\[
\left(1 + \frac{x}{n}\right)^{n^2} = (1 + o(1)) e^{nx - \frac{1}{2} x^2}
\]
as $n \to \infty$.

\begin{proof}
Define $\gamma_n = \frac tn + \frac{(a-b)^2}{4n^2}$;
note that
\begin{align*}
(1 + \gamma_n)^{n^2} &= (1 + o(1)) \exp\left(
\frac{(a-b)^2}{4} + tn - \frac{t^2}{2}\right) \\
(1 - \gamma_n)^{n^2} &= (1 + o(1)) \exp\left(
-\frac{(a-b)^2}{4} - tn - \frac{t^2}{2}\right) \\
(1 + \gamma_n)^n &= (1 + o(1)) \exp(t).
\end{align*}
Then, by Lemma~\ref{lem:cross-moments},
\begin{align*}
2^{2n} \E Y_n^2
&= \sum_{\sigma, \tau} \prod_{(u, v)} \E W_{uv} V_{uv} \\
&= \sum_{\sigma, \tau}
(1 + \gamma_n + O(n^{-3}))^{s_+}
(1 - \gamma_n + O(n^{-3}))^{s_-} \\
&= (1 + o(1)) e^{-t/2}
\sum_{\sigma, \tau} (1 + \gamma_n)^{(1 + \rho^2) n^2/4}
(1 - \gamma_n)^{(1 - \rho^2) n^2/4} \\
&= (1 + o(1)) e^{-t/2 - t^2/4}
\sum_{\sigma, \tau} \exp\left(\frac{\rho^2}{2}\left(\frac{(a-b)^2}{4} + tn\right)
\right).
\end{align*}

Computing the last term would be easy if $\rho \sqrt n$ were
normally distributed. Instead, it is binomially distributed,
which -- unsurprisingly -- is just as good. To show it, though, will
require a slight digression.

\begin{lemma}\label{lem:sub-lemma}
If $\xi_i \in \{\pm\}$ are taken uniformly and independently at random
and $Z_n = \frac{1}{\sqrt n}\sum_{i=1}^n \xi_i$ then
\[
\E \exp(s Z_n^2/2) \to \frac{1}{\sqrt{1 - s}}
\]
whenever $s < 1$.
\end{lemma}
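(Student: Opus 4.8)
The plan is to prove this via a direct moment computation, recognizing that $\E\exp(sZ_n^2/2)$ is almost a Gaussian integral in disguise. The cleanest route is the \emph{Hubbard--Stratonovich trick}: write $\exp(sZ_n^2/2)$ as a Gaussian average. Specifically, if $g \sim \normal(0,1)$ is independent of the $\xi_i$, then for $s < 1$ we have the identity $\exp(sZ_n^2/2) = \E_g\big[\exp(\sqrt{s}\,g\,Z_n)\big]$ when — wait, that requires $s>0$; more robustly one writes $\exp(sZ_n^2/2) = \frac{1}{\sqrt{2\pi}}\int_{\R} \exp(-x^2/2 + \sqrt{s}\,x Z_n)\,dx$ for $s \in [0,1)$, and handles $s<0$ by the analogous identity with a sign flip, or simply by noting that for $s<0$ the quantity $\E\exp(sZ_n^2/2) \le 1$ and one can still compute the limit directly. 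Let me instead lay out the approach that works uniformly.

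First I would compute the moment generating function of $\sum \xi_i$ directly: $\E\exp(\theta \sum_i \xi_i) = (\cosh\theta)^n$. Then, using the Gaussian representation $\exp(sZ_n^2/2) = \E_{g}\exp\!\big(\sqrt{s}\,g\, Z_n\big)$ valid for $0 \le s < 1$ (with $g\sim\normal(0,1)$ independent of the $\xi_i$; for $s<0$ use $\exp(sZ_n^2/2)=\E_g\exp(i\sqrt{-s}\,g Z_n)$, or argue by a change of contour), we swap the order of expectation — justified by Tonelli/Fubini since everything is nonnegative in the $s\ge 0$ case — to get
\[
\E\exp(sZ_n^2/2) = \E_g\,\Big(\cosh\frac{\sqrt{s}\,g}{\sqrt n}\Big)^{n}.
\]
Now $\big(\cosh(\sqrt s\, g/\sqrt n)\big)^n = \exp\big(n\log\cosh(\sqrt s\, g/\sqrt n)\big)$, and since $\log\cosh(x) = x^2/2 + O(x^4)$, the exponent converges pointwise to $s g^2/2$. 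Thus the integrand converges pointwise to $\exp(s g^2/2)$.

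Next I would upgrade pointwise convergence to convergence of the integral. Since $\log\cosh(x) \le x^2/2$ for all real $x$, the integrand is dominated by $\exp(s g^2/2)$, which is $\normal(0,1)$-integrable precisely because $s < 1$. Dominated convergence then gives
\[
\E\exp(sZ_n^2/2) \to \E_g \exp(s g^2/2) = \frac{1}{\sqrt{2\pi}}\int_{\R} e^{-x^2/2 + sx^2/2}\,dx = \frac{1}{\sqrt{1-s}},
\]
the last equality being the standard Gaussian integral evaluated for $s<1$. For the regime $-\infty < s < 0$ the cleanest fix is to observe that $\exp(sZ_n^2/2)$ is bounded by $1$ and the moment identity $\E Z_n^{2k} \to (2k-1)!!$ (which follows from, say, the CLT plus uniform integrability of the even powers, or a direct combinatorial count of pair partitions with a $O(1/n)$ error), so that term-by-term the power series $\sum_k \frac{(s/2)^k}{k!}\E Z_n^{2k}$ converges to $\sum_k \frac{(s/2)^k}{k!}(2k-1)!! = (1-s)^{-1/2}$; dominated convergence on the summation (bounding $\E Z_n^{2k} \le C_k$ with the right growth) legitimizes the exchange for any $s<1$, which also covers the positive range and makes the argument uniform.

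The main obstacle is the justification of the interchange of limit and integral (or of limit and summation) in the \emph{unbounded} direction $0 < s < 1$, since the ``temperature'' $s$ is approaching the critical value $1$ where the Gaussian integral diverges. This is exactly where the hypothesis $s<1$ is essential and nothing to spare: the domination $\log\cosh(x)\le x^2/2$ is tight at $x=0$, so the dominating function $e^{sg^2/2}$ is the best possible and is integrable iff $s<1$. Once that inequality is in hand the rest is routine. (In the paper's application one ultimately takes $s = t = \frac{(a-b)^2}{2(a+b)} < 1$, which is precisely the subcritical Kesten--Stigum condition, so this constraint is not an artifact of the proof but the genuine phase boundary.)
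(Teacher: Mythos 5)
Your proof is correct, but it takes a genuinely different route from the paper's. The paper's argument is probabilistic and compact: by the central limit theorem and the continuous mapping theorem, $\exp(sZ_n^2/2) \toD \exp(sZ^2/2)$ for $Z \sim \normal(0,1)$; to upgrade convergence in distribution to convergence of expectations it establishes uniform integrability directly via Hoeffding's inequality, giving the tail bound $\Pr(\exp(sZ_n^2/2) \ge M) \le M^{-1/s}$, which is uniformly integrable precisely when $s<1$. You instead use the Hubbard--Stratonovich (Gaussian linearization) identity to pull the quadratic out of the exponent, reduce $\E\exp(sZ_n^2/2)$ to the one-dimensional integral $\E_g(\cosh(\sqrt{s}g/\sqrt{n}))^n$, and then apply dominated convergence with the scalar inequality $\log\cosh x \le x^2/2$ serving as the dominating bound $\exp(sg^2/2)$. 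Both approaches locate the obstruction at $s=1$ in an equally tight way -- the paper's tail exponent $-1/s$ and your dominating function $e^{sg^2/2}$ each become non-integrable exactly at $s=1$ -- but your route is more self-contained (it does not invoke the CLT) at the cost of requiring the Fubini swap and, for $s<0$, a separate argument. That extra case is unnecessary in the paper's application since $t=\frac{(a-b)^2}{2(a+b)}>0$, and moreover for $s\le 0$ the integrand is bounded by $1$ so the bounded convergence theorem disposes of it immediately; the power-series detour you sketch is therefore more machinery than is needed, though it is not wrong.
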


\begin{proof}
Since $z \mapsto \exp(sz^2/2)$ is a continuous function,
the central limit theorem implies that
$\exp(s Z_n^2/2) \toD \exp(s Z^2 / 2)$, where $Z \sim \normal(0, 1)$.
Now, $\E \exp(s Z^2 / 2) = \frac{1}{\sqrt{1-s}}$ and so the proof
is complete if we can show that the sequence
$\exp(s Z_n^2/2)$ is uniformly integrable. But this follows from
Hoeffding's inequality:
\[
\Pr(\exp(s Z_n^2/2) \ge M) = \Pr\left(|Z_n| \ge \sqrt{\frac{2 \log M}{s}}
\right) \le M^{-1/s},
\]
which is integrable near $\infty$
(uniformly in $n$) whenever $s < 1$.
\end{proof}

To finish the proof of Lemma~\ref{lem:second-mom}, take $Z_n$
as in Lemma~\ref{lem:sub-lemma} and note that
\[
2^{-2n} \sum_{\sigma, \tau}
\exp\left(\frac{\rho^2}{2}\left(\frac{(a-b)^2}{4} + tn\right)\right)
= \E \exp\left(\frac{t(1 + o(1))}{2}Z_n^2\right)
\to \frac{1}{\sqrt{1 - t}}.
\qedhere
\]
\end{proof}

\subsection{Dependence on the number of short cycles}

Our next task is to check condition (b) in Theorem~\ref{thm:subgraph}.
Note, therefore, that $[X_3]_{j_3} \cdots [X_m]_{j_m}$ is the number of ways
to have an ordered tuple containing $j_3$
3-cycles of $G$, $j_4$ 4-cycles of $G$, and so on.
Therefore, if we can compute $\E Y_n 1_H$ where $1_H$ indicates
that any particular union of cycles occurs in $G_n$, then
we can compute $\E Y_n [X_3]_{m_3} \cdots [X_m]_{j_m}$. Computing
$\E Y_n 1_H$ is the main task of this section; we will do it in three
steps. First, we will get a general formula for
$\E Y_n 1_H$ in terms of $H$. We will apply this general formula
in the case that $H$ is a single cycle and get a much simpler
formula back. Finally, we will extend this to the case when $H$
is a union of vertex-disjoint cycles. 

As promised, we begin the program with a general formula for $\E 1_H
Y_n$. Let $H$ be a graph on some subset of $[n]$, with $|V(H)| =
m$. With some slight abuse
of notation, We write $1_H$ for the random variable that is $1$ when
$H \subset G$, and $\P'(H)$ for the probability that $H \subset G$.

\begin{lemma}\label{lem:cond-moments}
\[
\E 1_H Y_n =
2^{-m} \P'(H) \sum_{\sigma \in \{\pm 1\}^m}
\prod_{(u, v) \in E(H)} w_{uv}(\sigma),
\]
where
\[
w_{uv}(\sigma) =
\begin{cases}
\frac{2a}{a+b} & \text{if $(u, v) \in S(\sigma)$} \\
\frac{2b}{a+b} & \text{otherwise.}
\end{cases}
\]
\end{lemma}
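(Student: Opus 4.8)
The plan is to expand $\E 1_H Y_n$ using the representation $Y_n = 2^{-n} \sum_{\sigma \in \{\pm\}^n} \prod_{(u,v)} W_{uv}(G, \sigma)$ and the fact that, conditionally on $\sigma$, the variables $\{W_{uv}\}$ are independent across pairs (since the edge indicators are). First I would write
\[
\E 1_H Y_n = 2^{-n} \sum_{\sigma \in \{\pm\}^n} \E\Big(1_H \prod_{(u,v)} W_{uv}\Big)
= 2^{-n} \sum_{\sigma \in \{\pm\}^n} \E\Big(\prod_{(u,v) \in E(H)} 1_{\{(u,v) \in E\}} W_{uv} \cdot \prod_{(u,v) \notin E(H)} W_{uv}\Big).
\]
Using conditional independence given $\sigma$, the expectation factorizes over pairs. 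For a pair $(u,v) \notin E(H)$ we get $\E W_{uv} = 1$ by Lemma~\ref{lem:first-mom}, so all those factors drop out. For a pair $(u,v) \in E(H)$ we get $\E\big(1_{\{(u,v)\in E\}} W_{uv}\big)$, which by definition of $W_{uv}$ equals $\frac{a+b}{2n} \cdot \frac{2a}{a+b} = \frac{a}{n}$ if $\sigma_u = \sigma_v$ and $\frac{a+b}{2n} \cdot \frac{2b}{a+b} = \frac{b}{n}$ if $\sigma_u \ne \sigma_v$. Hence the per-edge factor is $\frac{1}{n} w_{uv}(\sigma)$ with $w_{uv}$ as in the statement, and there are $|E(H)|$ of them, contributing $n^{-|E(H)|} = \P'(H)$ (each edge of $H$ appears independently with probability $\frac{a+b}{2n}$... wait — more precisely $\P'(H) = (\frac{a+b}{2n})^{|E(H)|}$, and the $\frac{a+b}{2n}$ factors were already absorbed; I must be careful here, see below).

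The one subtlety to get right is the bookkeeping of the edge-probability factors: the correct accounting is that $\E\big(1_{\{(u,v)\in E\}} W_{uv} \mid \sigma\big) = \P'((u,v)\in E) \cdot (\text{value of }W_{uv}\text{ on that event}) = \frac{a+b}{2n} \cdot \frac{2}{a+b} w_{uv}(\sigma) = \frac{1}{n} w_{uv}(\sigma)$. So the product over $E(H)$ gives $n^{-|E(H)|} \prod_{(u,v)\in E(H)} w_{uv}(\sigma) = \P'(H) \prod_{(u,v)\in E(H)} w_{uv}(\sigma)$, using $\P'(H) = (\frac{a+b}{2n})^{|E(H)|}$ only if $H$'s edges appear... no: $\P'(H) = n^{-|E(H)|}$ is false in general. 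The clean way is simply to observe $\E\big(1_{\{(u,v)\in E\}} W_{uv}\mid\sigma\big) = \P'((u,v)\in E)\, \frac{2 w_{uv}(\sigma)}{a+b}$, multiply over the $|E(H)|$ edges of $H$ to obtain $\P'(H) \big(\tfrac{2}{a+b}\big)^{|E(H)|} \prod w_{uv}(\sigma)$, and then reconcile with the normalization implicit in $W_{uv}$; alternatively, fold the $\frac{2}{a+b}$ into the definition of $w_{uv}$, which is exactly what the statement does. So the factor $\big(\tfrac{2}{a+b}\big)^{|E(H)|}$ is absorbed into $w_{uv}$ and $\P'(H)$ stands alone.

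The last step is the reduction from a sum over $\sigma \in \{\pm\}^n$ to a sum over $\sigma \in \{\pm\}^m$: the product $\prod_{(u,v)\in E(H)} w_{uv}(\sigma)$ depends on $\sigma$ only through its restriction to $V(H)$, which has $m$ vertices. Summing over the $2^{n-m}$ labellings of the remaining vertices just multiplies by $2^{n-m}$, which cancels against $2^{-n}$ to leave $2^{-m}$. This yields exactly the claimed formula. I do not expect a genuine obstacle here — the only thing requiring care is the edge-probability normalization described above, i.e.\ making sure the $\frac{a+b}{2n}$ from $\P'$ and the $\frac{2}{a+b}$ from $W_{uv}$ combine correctly so that the $n$-dependence of the per-edge factor is exactly $n^{-1}$ while the constant lands inside $w_{uv}$, and the remaining $\frac{a+b}{2n}$-type factors assemble into $\P'(H)$.
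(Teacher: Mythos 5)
Your overall strategy matches the paper's: expand $Y_n$ as a sum over $\sigma$, use conditional independence of the $W_{uv}$ to factor the expectation over pairs, kill the factors with $(u,v) \notin E(H)$ via Lemma~\ref{lem:first-mom}, and collapse the sum over $\sigma \in \{\pm\}^n$ to one over $\{\pm\}^{V(H)}$ since the surviving product depends only on $\sigma|_{V(H)}$. That architecture is sound and is exactly the paper's.

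The problem is the per-edge bookkeeping, where you introduce an error and never cleanly resolve it. Your first computation is correct: $\E\big[1_{\{(u,v)\in E\}} W_{uv}\big] = \frac{a+b}{2n}\cdot\frac{2a}{a+b} = \frac{a}{n}$ when $\sigma_u = \sigma_v$ (and $\frac{b}{n}$ otherwise). But you then rewrite this as ``$\frac{1}{n} w_{uv}(\sigma)$,'' which is false: $\frac{1}{n}\cdot\frac{2a}{a+b} \neq \frac{a}{n}$. The correct identity is $\frac{a}{n} = \frac{a+b}{2n}\, w_{uv}(\sigma)$. Your later ``fix'' compounds the error by asserting that the value of $W_{uv}$ on the event $(u,v)\in E(G)$ is $\frac{2}{a+b} w_{uv}(\sigma)$; comparing the definitions shows $W_{uv}$ equals $w_{uv}(\sigma)$ \emph{exactly} on this event, with no extra factor. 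Once this is corrected, the per-edge factor is $\frac{a+b}{2n}\, w_{uv}(\sigma)$, the product over $E(H)$ is $\left(\frac{a+b}{2n}\right)^{|E(H)|}\prod_{(u,v)\in E(H)} w_{uv}(\sigma) = \P'(H)\prod_{(u,v)\in E(H)} w_{uv}(\sigma)$, and there is no ``normalization implicit in $W_{uv}$'' to reconcile and no factor to fold in. (The paper avoids the per-edge decomposition entirely by noting that on the event $H \subseteq G$ one has $\prod_{(u,v)\in E(H)} W_{uv} = \prod_{(u,v)\in E(H)} w_{uv}(\sigma)$ identically, then pulling $1_H$ out to produce $\P'(H)$ in one stroke; this is slightly cleaner but equivalent to your route.)
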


\begin{proof}
We break up $\sigma \in \{\pm 1\}^n$ into $(\sigma_1, \sigma_2) \in
\{\pm 1\}^{V(H)} \times \{\pm 1\}^{V(G) \setminus V(H)}$ and sum over the two parts
separately.  Note that if $(u,v) \in E(H)$ then $W_{uv}(G, \sigma)$
depends on $\sigma$ only through $\sigma_1$. Let $D(H) = E(G) \setminus E(H)$,
so that $(u, v) \in D(H)$ implies
that $W_{uv}$ and $1_H$ are independent. Then
\begin{align*}
\E 1_H Y_n
&= 2^{-n} \sum_{\sigma_1} \sum_{\sigma_2} \E 1_H
\prod_{(u, v)} W_{uv}(G, \sigma) \\
&= 2^{-n} \sum_{\sigma_1} \bigg( \Big(\E 1_H \prod_{(u, v) \in E(H)} W_{uv}\Big)
\sum_{\sigma_2} \prod_{(u, v) \in D(H)} \E W_{uv} \bigg) \\
&= 2^{-m} \sum_{\sigma_1} \Big(\E 1_H \prod_{(u, v) \in E(H)} W_{uv}\Big),
\end{align*}
because if $(u, v) \in D(H)$ then, for every $\sigma$,
Lemma~\ref{lem:first-mom}
says that $\E W_{uv}(G, \sigma) = 1$.
To complete the proof, note that if $(u, v) \in E(H)$
then for any $\sigma$,
$W_{uv}(G, \sigma) \equiv w_{uv}(\sigma)$ on the event $H \subseteq G$.
\end{proof}

The next step is to compute the right hand
side of Lemma~\ref{lem:cond-moments} in the case
that $H$ is a cycle. This computation is very similar to the one
in Lemma~\ref{lem:mean-cycles}, when we computed the expected number
of $k$-cycles in $\curlyG(n, \frac an, \frac bn)$. Essentially,
we want to compute the expected ``weight'' of a cycle, where
the weight of each edge depends only on whether its endpoints
have the same label or not.

\begin{lemma}\label{lem:one-cycle}
If $H$ is a $k$-cycle then
\[
\sum_{\sigma \in \{\pm 1\}^H} \prod_{(u, v) \in E(H)} w_{uv} (\sigma) =
2^k \left(1 + \left(\frac{a-b}{a+b}\right)^k\right).
\]
\end{lemma}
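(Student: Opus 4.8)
The plan is to mimic, almost verbatim, the computation in the proof of Lemma~\ref{lem:mean-cycles}, where we evaluated the expected ``weight'' of a $k$-cycle in $\curlyG(n,\frac an,\frac bn)$. Write the vertices of the cycle $H$ as $v_0, v_1, \dots, v_{k-1}$ in cyclic order, and for a labelling $\sigma \in \{\pm 1\}^{V(H)}$ let $N = N(\sigma)$ be the number of indices $i$ (taken mod $k$) with $\sigma_{v_i} \ne \sigma_{v_{i+1}}$. Since every edge of $H$ with equal endpoints contributes a factor $\frac{2a}{a+b}$ to the product and every edge with unequal endpoints contributes $\frac{2b}{a+b}$, we have
\[
\prod_{(u,v)\in E(H)} w_{uv}(\sigma)
= \left(\frac{2a}{a+b}\right)^{k-N(\sigma)} \left(\frac{2b}{a+b}\right)^{N(\sigma)}.
\]
Thus the left-hand side of the lemma depends on $\sigma$ only through $N(\sigma)$, and it remains to count, for each $m$, the number of $\sigma$ with $N(\sigma) = m$.

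Next I would invoke the parity argument already used in Lemma~\ref{lem:mean-cycles}: going around a cycle, the number of sign changes is necessarily even, so $N(\sigma)$ is always even; conversely, the ``change / no-change'' indicators on the first $k-1$ edges can be chosen freely subject to producing an even total (which then pins down the last one), with two additional choices for $\sigma_{v_0}$. This yields exactly $2\binom{k}{m}$ labellings with $N(\sigma) = m$ when $m$ is even, and none when $m$ is odd --- this is simply $2^k$ times the probability $\P(N = m) = 2^{-k+1}\binom{k}{m}$ computed there. Substituting,
\[
\sum_{\sigma \in \{\pm 1\}^{V(H)}} \prod_{(u,v)\in E(H)} w_{uv}(\sigma)
= \sum_{m \text{ even}} 2\binom{k}{m}\left(\frac{2a}{a+b}\right)^{k-m}\left(\frac{2b}{a+b}\right)^{m}
= \left(\frac{2}{a+b}\right)^{k} \sum_{m \text{ even}} 2\binom{k}{m} a^{k-m} b^{m}.
\]

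Finally I would close using the elementary identity $\sum_{m \text{ even}}\binom{k}{m} a^{k-m}b^m = \tfrac12\big((a+b)^k + (a-b)^k\big)$, obtained by adding the binomial expansions of $(a+b)^k$ and $(a-b)^k$. This turns the right-hand side into $\left(\frac{2}{a+b}\right)^{k}\big((a+b)^k + (a-b)^k\big) = 2^{k}\big(1 + (\tfrac{a-b}{a+b})^{k}\big)$, as claimed. I do not anticipate any genuine obstacle here; the only point requiring a moment's care is the counting of labellings by the value of $N(\sigma)$, and since that has already been carried out in Lemma~\ref{lem:mean-cycles}, the whole argument is a short, self-contained calculation.
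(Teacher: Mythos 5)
Your proof is correct and is essentially the paper's own argument: both reduce the sum to the distribution of the number $N$ of label-changing edges around the cycle, invoke the parity count $\P(N=m)=2^{-k+1}\binom{k}{m}$ from Lemma~\ref{lem:mean-cycles}, and finish with the even-index binomial identity $\sum_{m\text{ even}}\binom{k}{m}a^{k-m}b^m=\tfrac12\big((a+b)^k+(a-b)^k\big)$. The only cosmetic difference is that the paper phrases the computation as an expectation over a uniform $\sigma$ while you count labellings directly; these differ only by the normalizing factor $2^k$.
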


\begin{proof}
Let $e_1, \dots, e_k$ be the edges of $H$.
Provided that we renormalize,
we can replace the sum over $\sigma$ by an expectation,
where $\sigma$ is taken uniformly in $\{\pm 1\}^H$.
Now, let $N$ be the number of edges of $H$
whose endpoints have different labels. As discussed in the proof
of Lemma~\ref{lem:mean-cycles}, $\Pr(N = j) =
2^{-k+1} \binom{k}{j}$ for even $j$, and zero otherwise.
Then
\begin{align*}
\E_\sigma \prod_{(u,v) \in E(H)} w_{uv}(\sigma)
&= \E_\sigma \left(\frac{2a}{a+b}\right)^{k-N}
\left(\frac{2b}{a+b}\right)^N \\
&= \frac{2}{(a+b)^k} \sum_{j \text{ even}}
\binom{k}{j} a^{k-j} b^j \\
&= 1 + \left(\frac{a-b}{a+b}\right)^k.\qedhere
\end{align*}
\end{proof}

Extending this calculation to vertex-disjoint
unions of cycles is quite easy: suppose
$H$ is the union of cycles $H_i$. Since $w_{uv}(\sigma)$
only depends on $\sigma_u$ and $\sigma_v$, we can just
split up the sum over $\sigma \in \{\pm\}^H$ into a product
of sums, where each sum ranges over $\{\pm\}^{H_i}$. Then
applying Lemma~\ref{lem:one-cycle} to each $H_i$ yields
a formula for $H$.

\begin{lemma}\label{lem:disjoint-cycles}
Define
\[
\delta_k = \left(\frac{a-b}{a+b}\right)^k.
\]
If $H = \bigcup_i H_i$ is a vertex-disjoint union of graphs
and each $H_i$
is a $k_i$-cycle, then
\[
\sum_{\sigma \in \{\pm 1\}^H} \prod_{(u,v) \in E(H)} w_{uv}(H, \sigma)
= 2^{|H|} \prod_i (1 + \delta_{k_i}).
\]
\end{lemma}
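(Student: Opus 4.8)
The plan is to exploit the locality of the weight function: $w_{uv}(\sigma)$ depends on $\sigma$ only through the pair $(\sigma_u, \sigma_v)$. Since $H = \bigcup_i H_i$ is a \emph{vertex}-disjoint union, the vertex set $V(H)$ partitions as $\bigsqcup_i V(H_i)$ and the edge set $E(H)$ partitions as $\bigsqcup_i E(H_i)$, with every edge of $H_i$ having both endpoints inside $V(H_i)$. Consequently a labelling $\sigma \in \{\pm 1\}^{V(H)}$ is the same data as a tuple $(\sigma^{(i)})_i$ with $\sigma^{(i)} \in \{\pm 1\}^{V(H_i)}$, and the product over $E(H)$ factors accordingly.

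Concretely, I would write
\[
\sum_{\sigma \in \{\pm 1\}^{V(H)}} \prod_{(u,v) \in E(H)} w_{uv}(\sigma)
= \sum_{(\sigma^{(i)})_i} \prod_i \prod_{(u,v) \in E(H_i)} w_{uv}(\sigma^{(i)})
= \prod_i \left( \sum_{\sigma^{(i)} \in \{\pm 1\}^{V(H_i)}} \prod_{(u,v) \in E(H_i)} w_{uv}(\sigma^{(i)}) \right),
\]
where the last step turns a sum of products into a product of sums because the summation variables $\sigma^{(i)}$ range independently and each inner product depends only on its own $\sigma^{(i)}$.

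Now each $H_i$ is a $k_i$-cycle, so Lemma~\ref{lem:one-cycle} applies to each factor and gives $\sum_{\sigma^{(i)}} \prod_{(u,v) \in E(H_i)} w_{uv}(\sigma^{(i)}) = 2^{k_i}(1 + \delta_{k_i})$. Multiplying over $i$ and using that a $k_i$-cycle has exactly $k_i$ vertices — so that $\sum_i k_i = \sum_i |V(H_i)| = |V(H)| = |H|$ by vertex-disjointness — yields $2^{|H|} \prod_i (1 + \delta_{k_i})$, which is the claimed identity.

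There is essentially no obstacle here; the lemma is a routine consequence of Lemma~\ref{lem:one-cycle} together with the product structure of the state space. The only point requiring minor care is the bookkeeping: one must use that the cycles are vertex-disjoint (not merely edge-disjoint) both to factor the sum over labellings cleanly and to identify $\sum_i k_i$ with $|H|$.
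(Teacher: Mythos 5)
Your proof is correct and is essentially identical to the paper's argument, which also factors the sum over $\sigma \in \{\pm\}^{V(H)}$ into a product of per-cycle sums (justified by vertex-disjointness and the locality of $w_{uv}$) and then applies Lemma~\ref{lem:one-cycle} to each factor. You spell out the bookkeeping a bit more explicitly, but there is no difference in substance.
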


We we need one last ingredient, which we hinted at earlier,
before we can show condition (b) of Theorem~\ref{thm:subgraph}.
We only know how to exactly compute $\E Y_n 1_H$ when $H$ is a
disjoint union of cycles. Now, most tuples of cycles are
disjoint, but in order to dismiss the contributions from the
non-disjoint unions, we need some bound
on $\E Y_n 1_H$ that holds for all $H$:

\begin{lemma}\label{lem:horrible-bound}
For any $H$,
\[
\sum_{\sigma \in \{\pm 1\}^H} \prod_{(u, v) \in E(H)} w_{uv}(\sigma)
\le 2^{|H| + |E(H)|}.
\]
\end{lemma}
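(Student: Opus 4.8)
The plan is to use a completely crude bound: replace every factor by its largest possible value and then count the number of summands. First I would observe that, since $a > b > 0$, each weight satisfies $0 < w_{uv}(\sigma) \le 2$ for every pair $(u,v)\in E(H)$ and every $\sigma$; indeed $\tfrac{2b}{a+b} < \tfrac{2a}{a+b} \le 2$, the last inequality being equivalent to $b \ge 0$. Consequently, for each \emph{fixed} $\sigma \in \{\pm 1\}^H$, the product $\prod_{(u,v)\in E(H)} w_{uv}(\sigma)$ is a product of $|E(H)|$ numbers each at most $2$, hence bounded by $2^{|E(H)|}$.

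Next I would sum this bound over the $2^{|H|}$ labellings $\sigma \in \{\pm 1\}^{V(H)}$. Since each of the $2^{|H|}$ summands is at most $2^{|E(H)|}$, the total is at most $2^{|H|}\cdot 2^{|E(H)|} = 2^{|H|+|E(H)|}$, which is exactly the claimed inequality.

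There is essentially no obstacle here — the bound is deliberately wasteful, and that is all that is required, since it will only be used to control the contribution of the \emph{non}-vertex-disjoint unions of cycles, for which the exact evaluation in Lemma~\ref{lem:disjoint-cycles} is not available and a rough upper bound suffices. The only point worth a moment's care is verifying $\tfrac{2a}{a+b}\le 2$, which uses $b>0$ (part of the standing assumptions); if one wanted to be fully scrupulous one could note that even the non-strict bound $w_{uv}(\sigma)\le 2$ is enough, so the argument is robust.
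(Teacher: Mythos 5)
Your proof is correct and takes essentially the same approach as the paper's: bound each weight by $2$ (the paper uses $w_{uv}(\sigma) \le \tfrac{2\max\{a,b\}}{a+b} \le 2$, which is the same observation without assuming $a>b$), so each product is at most $2^{|E(H)|}$, and summing over the $2^{|H|}$ labellings gives the claim.
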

\begin{proof}
\[
w_{uv}(\sigma) \le \frac{2\max\{a,b\}}{a+b} \le 2
\]
for any $i, j, H$ and $\sigma$.
\end{proof}

Finally, we are ready to put these ingredients
together and prove condition (b) of Theorem~\ref{thm:subgraph}.
For the rest of the section, take
$\delta_k = (\frac{a-b}{a+b})^k$ as it was
in Lemma~\ref{lem:disjoint-cycles}.
Also, recall that $\lambda_k = \frac{1}{2k} \big(\frac{a+b}{2}\big)^k$
is the limit of $\E X_k$ as $n \to \infty$.

\begin{lemma}\label{lem:cond-a}
Let $X_k$ be the number of $k$-cycles in $G$.
For any $j_3, \dots, j_m \in \N$,
\[
\E Y_n \prod_{k=3}^m [X_k]_{j_k}
\to \prod_{k=3}^m (\lambda_k (1 + \delta_k))^{j_k}.
\]
\end{lemma}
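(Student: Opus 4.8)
\emph{Overview.} This lemma is condition (b) of Theorem~\ref{thm:subgraph}. The plan is to expand the falling-factorial product combinatorially and then feed the resulting cycle-unions into the weight formulas already proved. Since $[X_k]_{j_k}$ is exactly the number of ordered $j_k$-tuples of \emph{distinct} $k$-cycles present in $G$, the product $\prod_{k=3}^m [X_k]_{j_k}$ equals $\sum_{\mathcal T} 1_{\{H(\mathcal T)\subseteq G\}}$, where $\mathcal T$ ranges over the tuples that choose, for each $k\in\{3,\dots,m\}$, an ordered $j_k$-tuple of distinct $k$-cycles, and $H(\mathcal T)$ denotes the union of all these cycles. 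By linearity,
\[
\E Y_n \prod_{k=3}^m [X_k]_{j_k} = \sum_{\mathcal T} \E\big(Y_n\, 1_{\{H(\mathcal T)\subseteq G\}}\big),
\]
and I would split this sum according to whether the cycles in $\mathcal T$ are pairwise vertex-disjoint (the \emph{good} tuples) or not (the \emph{bad} tuples).

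\emph{Good tuples.} Here $H = H(\mathcal T)$ is a vertex-disjoint union of cycles, so Lemma~\ref{lem:cond-moments} together with Lemma~\ref{lem:disjoint-cycles} gives $\E Y_n 1_{\{H\subseteq G\}} = 2^{-|V(H)|}\P'(H)\cdot 2^{|V(H)|}\prod_k(1+\delta_k)^{j_k} = \P'(H)\prod_k(1+\delta_k)^{j_k}$, and $\P'(H)=\big(\tfrac{a+b}{2n}\big)^{\sum_k k j_k}$ since $|E(H)|=\sum_k k j_k$. The number of good tuples is $(1+o(1))\prod_k\big(\binom{n}{k}\tfrac{(k-1)!}{2}\big)^{j_k}\sim \prod_k\big(\tfrac{n^k}{2k}\big)^{j_k}$: both the falling-factorial corrections and the vertex-disjointness constraint cost only a factor $1+o(1)$, because the total number of vertices involved is $\sum_k k j_k = O(1)$ while $n\to\infty$. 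Multiplying, the good tuples contribute
\[
(1+o(1))\prod_{k=3}^m\Big[\tfrac{n^k}{2k}\big(\tfrac{a+b}{2n}\big)^{k}(1+\delta_k)\Big]^{j_k} = (1+o(1))\prod_{k=3}^m\big(\lambda_k(1+\delta_k)\big)^{j_k},
\]
recalling $\lambda_k = \tfrac{1}{2k}\big(\tfrac{a+b}{2}\big)^k$.

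\emph{Bad tuples.} For these I would use the crude estimate of Lemma~\ref{lem:horrible-bound}, which with Lemma~\ref{lem:cond-moments} and $\P'(H)=\big(\tfrac{a+b}{2n}\big)^{|E(H)|}$ yields $\E Y_n 1_{\{H\subseteq G\}}\le 2^{-|V(H)|}\P'(H)\, 2^{|V(H)|+|E(H)|} = \big(\tfrac{a+b}{n}\big)^{|E(H)|}$. The decisive combinatorial point is that if $H$ is a union of cycles in which two of them share a vertex, then $|E(H)|\ge |V(H)|+1$: writing $|E(H)|-|V(H)| = \sum_{K}(\beta_1(K)-1)$ over connected components $K$, where $\beta_1(K)=|E(K)|-|V(K)|+1$ is the number of independent cycles of $K$, each summand is $\ge 0$ since every component of $H$ contains a whole cycle, and the component containing the two overlapping cycles contains two distinct cycles, hence has $\beta_1\ge 2$ (a connected graph with $\beta_1=1$ has a unique cycle). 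Grouping the bad tuples by the isomorphism type of $H$ — a finite set of graphs, each on $O(1)$ vertices — the number of bad tuples with a given type is $O(n^{|V(H)|})$, so the bad contribution is $O\big(\sum_H (a+b)^{|E(H)|} n^{|V(H)|-|E(H)|}\big) = O(1/n)\to 0$. Adding the good limit and the vanishing bad part gives the claim. I expect the bad-tuple bookkeeping — in particular isolating the strict inequality $|E(H)|>|V(H)|$ when two cycles overlap — to be the only genuinely delicate step; the rest is a direct substitution into Lemmas~\ref{lem:cond-moments}, \ref{lem:disjoint-cycles} and \ref{lem:horrible-bound}.
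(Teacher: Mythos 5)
Your proposal is correct and follows essentially the same route as the paper: expand $\prod_k [X_k]_{j_k}$ as a sum over tuples of cycles, split into vertex-disjoint and overlapping tuples, evaluate the former via Lemmas~\ref{lem:cond-moments} and~\ref{lem:disjoint-cycles}, and dismiss the latter via Lemma~\ref{lem:horrible-bound} together with the fact that a union of cycles in which two overlap has strictly more edges than vertices. Your first-Betti-number phrasing of that last fact is just a cleaner statement of the same combinatorial observation the paper makes when it notes that such an $H$ has at most $M-1$ vertices but $M$ edges.
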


\begin{proof}
Set $M = \sum_k k m_k$.
First of all,
\[
[X_k]_{j} = \sum_{H_1, \dots, H_j} \prod_i 1_{H_i}
\]
where the sum ranges over all $j$-tuples of distinct
$k$-cycles, and $1_{H}$ indicates the event that the subgraph $H$
appears in $G$. Thus,
\[
\prod_{k=3}^m [X_k]_{j_k} = \sum_{(H_{ki})} \prod_{k=3}^m \prod_{i=1}^{j_k}
 1_{H_{ki}}
= \sum_{(H_{ki})} 1_{\{\bigcup H_{ki}\}},
\]
where the sum ranges over all $M$-tuples of cycles
$(H_{ki})_{k \le m, i \le j_k}$ for which each $H_{ki}$ is an $k$-cycle,
and every cycle is distinct. Let $\curlyH$ be the set of such tuples;
let $A \subset \curlyH$ be the set of such tuples
for which the cycles are vertex-disjoint, and let $B = \curlyH
\setminus A$.
Thus, if $H = \bigcup H_{ki}$ for $(H_{ki}) \in A$, then
\[
\E Y_n 1_H
= \prod_k(1 + \delta_k)^{j_k} \P'(H)
\]
by Lemmas~\ref{lem:cond-moments} and~\ref{lem:disjoint-cycles}.
Note also that standard counting arguments
(see, for example,~\cite{B:random-graphs}, Chapter 4) imply
that $|A| \P'(H) \to \prod_k \lambda_k^{j_k}$.

On the other hand, if $(H_{ki}) \in B$ then
$H := \bigcup_{ki} H_{ki}$ has at most $M-1$ vertices, $M$ edges,
and its number of edges is strictly larger than its number
of vertices. Thus, $\P'(H) \binom{n}{|H|} \to 0$,
so Lemmas~\ref{lem:cond-moments}
and~\ref{lem:horrible-bound} imply that
\[
\sum_{H' \sim H} \E Y_n 1_H \le \P'(H) |H|! \binom{n}{|H|}
2^{M}
\to 0,
\]
where the sum ranges over all ways to make an isomorphic copy of $H$
on $n$ vertices. Since there are only a bounded number of isomorphism
classes in
\[
\big\{ \bigcup_{ki} H_{ki}: (H_{ki}) \in B\big\},
\]
it follows that $\sum_H \E Y_n 1_H \to 0$, where the sum ranges
over all unions of non-disjoint tuples in $\curlyH$.
 Thus,
\begin{align*}
\E Y_n \prod_{k=3}^m [X_k]_{j_k}
&= \E Y_n \left(
\sum_{(H_{ki}) \in A} 1_{\bigcup H_{ki}}
+ \sum_{(H_{ki}) \not \in B} 1_{\bigcup H_{ki}}
\right) \\
&= |A| \P'(H) \prod_k(1 + \delta_k)^{j_k} + o(1) \\
&\to \prod_k(\lambda_k (1+\delta_k))^{j_k}. \qedhere
\end{align*}
\end{proof}

To complete the proof of Theorem~\ref{thm:distinguish},
note that $\delta_k^2 \lambda_k = \frac{t^k}{2k}$. Thus,
$\sum_{k \ge 3} \delta_k^2 \lambda_k = \frac{1}{2} (\log(1 - t) - t - t^2/2)$.
When $t < 1$, this (with Lemma~\ref{lem:second-mom}) proves conditions
(c) and (d) of Theorem~\ref{thm:subgraph}. Since
condition (a) is classical and condition (b) is given by Lemma~\ref{lem:cond-a},
the conclusion of Theorem~\ref{thm:subgraph}
implies the first statement in 
Theorem~\ref{thm:distinguish}.

We finally apply 
the first half of Theorem~\ref{thm:distinguish}  to show 
that no estimator can be consistent when $(a - b)^2 < 2(a+b)$.
In fact, if $\hat a$ and $\hat b$ are estimators for $a$ and $b$
which converge in probability, then their limit when
$(a - b)^2 < 2(a+b)$ depends only on $a+b$. To see this, let
$\alpha, \beta$ be another choice of parameters with
$(\alpha - \beta)^2 < 2(\alpha + \beta)$ and $\alpha + \beta = a + b$;
Let $\Q_n = \curlyG_n(\alpha, \beta)$; take
$a^*$ to be the in-probability limit of $\hat a$ under $\P_n$
and $\alpha^*$ to be its limit under $\Q_n$.
For an arbitrary $\epsilon > 0$,
let $A_n$ be the event $|\hat a - a^*| > \epsilon$;
thus, $\P_n(A_n) \to 0$.
By the first part of Theorem~\ref{thm:distinguish}, $\P'_n(A_n) \to 0$ also.
Since $\alpha + \beta = a + b$, we can apply the first part of 
Theorem~\ref{thm:distinguish} to $\Q_n$, implying that $\Q_n(A_n) \to 0$
and so $\alpha^* = a^*$.
That is, $\hat a$ converges to the same limit under $\Q_n$ and $\P_n$.

\section{Conjectures Regarding Regular Models} \label{sec:regular} 
We briefly discuss how can one define a regular version of the model and what we expect from the behavior of such a model. 
A regular model should satisfy the following properties:
\begin{itemize}
\item The graph $G$ is a.s.\ a simple $d$-regular graph.
\item For each vertex $u$ among the $d$ neighbors it is connected to, it is connected to $\Binom(d,1-\eps)$ vertices $v$ with 
$\sigma_v = \sigma_u$. 
\item Choices at different vertices are (almost) independent. 
\end{itemize}

As is often the case with random regular graphs, the construction is not completely trivial. 
Here are two possible constructions:
\begin{itemize}
\item

Let $\{X_v: v \in V\}$ be a collection of independent
$\Binom(d,1-\eps)$ variables, conditioned on
\[
  \sum_{v: \sigma_v = +} X_v = \sum_{v: \sigma_v = -} X_v
  \quad \text{is even.}
\]
Now the $(+,+)$ edges are defined by sampling a uniform random graph
on $\{v: \sigma_v = +\}$ with
degree distribution given by $\{X_v: \sigma_v = +\}$,
while the $(-, -)$ edges are defined
by sampling a uniform random graph on $\{v: \sigma_v = -\}$ with
degree distribution given by $\{X_v: \sigma_v = -\}$.
To construct the $(+,-)$ edges we take a uniformly random bipartite graph
with left degrees given by $\{d-X_v: \sigma_v = +\}$
and right degrees given by $\{d-X_v: \sigma_v = -\}$. 

\item
The second construction uses a variant of the configuration model. We generate the graph by generating $d$ independent matchings. 
The probability of each matching is proportional to $(1-\eps)^{n_=} \eps^{n_\neq}$, where $n_=$ is the number of edges $(u,v)$ with $\sigma_u=\sigma_v$ points and $n_{\neq}$ is the number of edges $(u,v)$ with $\sigma_u \neq \sigma_v$. 
\end{itemize}

\subsection{Conjectures}
We conjecture that the results of the paper should extend to the models above where the quantity $(a-b)^2/2(a+b)$ is now replaced by 
$(d-1)\theta^2$, where $\theta = 1-2\eps$. Friedman's proof of Alon's conjecture~\cite{Friedman:10} gives
a very accurate information regarding the spectrum of uniformly random $d$-regular graphs.
We propose the following related conjecture. 

\begin{conjecture} \label{conj:eigen}
Assume $(d-1)\theta^2 > 1$. Then there exist an $\delta > 0$, s.t. with high probability,
the second eigenvalue of the graph generated $\lam_2(G)$ satisfies $\lam_2(G) >  2 \sqrt{d-1} + \delta$. 
Moreover, all other eigenvalues of $G$ are smaller than $2\sqrt{d-1}$, and
the eigenvector associated to $\lam_2(G)$ is correlated with the true partition.
\end{conjecture}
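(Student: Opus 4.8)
The plan is to pass to the non-backtracking (Hashimoto) operator $B$ acting on the $nd$ oriented edges of $G$, determine that its spectrum consists of two outliers together with a bulk of modulus $\sqrt{d-1}+o(1)$, and then transfer this picture to $A(G)$ via the Ihara--Bass identity. Because $G$ is a.s.\ simple and $d$-regular, Ihara--Bass gives $\det(I-zB) = (1-z^2)^{|E|-n}\det\big((d-1)z^2 I - zA(G) + I\big)$, so every eigenvalue $\mu$ of $B$ with $\mu\notin\{\pm1,\pm\sqrt{d-1}\}$ corresponds to an eigenvalue $\lambda=\mu+(d-1)/\mu$ of $A(G)$, with eigenvectors related by an explicit linear map. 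The trivial eigenvalue $\lambda=d$ corresponds to $\mu=d-1$; what remains is to show that $B$ has exactly one further eigenvalue $\mu_2$ outside the disk of radius $\sqrt{d-1}$, with $\mu_2=(d-1)\theta+o(1)$ and eigenvector correlated with $\sigma$. The conjecture for $A(G)$ then follows since $x\mapsto x+(d-1)/x$ is increasing on $(\sqrt{d-1},\infty)$ and sends $(d-1)\theta$ to $(d-1)\theta+1/\theta$, which by AM--GM exceeds $2\sqrt{d-1}$ by a fixed $\delta=\delta(d,\theta)>0$ exactly when $(d-1)\theta^2>1$, while the bulk $\{|\mu|\le\sqrt{d-1}+o(1)\}$ maps into $[-2\sqrt{d-1}-o(1),\,2\sqrt{d-1}+o(1)]$.

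To locate $\mu_2$ I would feed $B$ the ``signal'' test vector $\chi$ defined on oriented edges $e=(u\to v)$ by $\chi_e=\sigma_v$ (or a symmetrized variant that is orthogonal to the $\mathbf 1$-direction). Using that each vertex has exactly $\Binom(d,1-\eps)$ same-label neighbours, a short computation gives $B\chi\approx(d-1)\theta\,\chi$ up to an error that is small in the relevant local-weak-limit sense; together with a matching lower bound on $\|\chi\|$ this pins a real eigenvalue of $B$ near $(d-1)\theta$. The eigenvector-correlation claim comes from arguing that the genuine $B$-eigenvector near $\mu_2$ is close to $\chi$, and that pushing $\chi$ down to the vertices through the Ihara--Bass map yields a vector correlated with $\sigma$; here the local tree-like structure of $G$ and the branching-process heuristic underlying Theorem~\ref{thm:markov} --- now in the $d$-regular guise, where the reconstruction parameter is precisely $(d-1)(1-2\eps)^2=(d-1)\theta^2$ --- do the bookkeeping.

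The main obstacle is the bulk bound: showing that \emph{all} eigenvalues of $B$ other than $d-1$ and $\mu_2$ have modulus at most $\sqrt{d-1}+o(1)$ with high probability. This is the planted-model analogue of Friedman's theorem (Conjecture~\ref{conj:eigen} invokes \cite{Friedman:10} for exactly this reason), and I would attack it by the high-trace / \emph{tangle-free} path method: bound $\E\,\mathrm{tr}\big((B^*)^\ell B^\ell\big)$ after restricting to the overwhelmingly likely event that every ball of radius $O(\log n)$ is tangle-free (contains at most one cycle), and decompose non-backtracking walks into a short-range part handled by the matching randomness and a long-range part controlled by a deterministic count of non-backtracking tangle-free walks. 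One must first subtract the rank-one contributions of $\mathbf 1$ and $\chi$ so the centered operator really has small norm. The second construction in Section~\ref{sec:regular} --- the graph built from $d$ independent matchings weighted by $(1-\eps)^{n_=}\eps^{n_{\neq}}$ --- is the convenient model, since the $d$ matchings are independent and each is a weighted perfect matching amenable to clean concentration estimates.

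With these three ingredients in hand --- the Ihara--Bass dictionary, the perturbative identification of $\mu_2$ and its eigenvector, and the tangle-free bulk bound --- the three assertions follow by translating back through $\lambda=\mu+(d-1)/\mu$: the unique $B$-outlier outside radius $\sqrt{d-1}$ produces the unique $A$-eigenvalue above $2\sqrt{d-1}$, with gap $\delta$ and a partition-correlated eigenvector, while everything else of $A(G)$ stays within $2\sqrt{d-1}+o(1)$.
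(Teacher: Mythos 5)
Your plan and the paper's sketch aim at the same outlier $\lambda = \theta^{-1} + (d-1)\theta$ (which exceeds $2\sqrt{d-1}$ precisely when $(d-1)\theta^2>1$, by AM--GM) and at an approximate eigenvector built from the labels, but they are genuinely different routes, and you are also aiming at a larger target. The paper never passes to the non-backtracking operator or invokes Ihara--Bass: it works directly with $A$ and the vertex test function $f(v)=\sum\{\sigma_w : d(w,v)=r\}$, expands $Af$ on the locally tree-like ball to get $(Af)(v)=\sum_{d(v,w)=r+1}\sigma_w+(d-1)\sum_{d(v,w)=r-1}\sigma_w$, telescopes $Af-\lambda f$ into two conditionally centered sums each of variance $O((d-1)^r)$, and compares against $\|f\|_2^2\gtrsim n((d-1)\theta)^{2r}$; since $((d-1)\theta)^2>(d-1)$ exactly when $(d-1)\theta^2>1$, taking $r$ large makes the relative error vanish. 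Crucially, the paper explicitly stops there: it states that showing \emph{all other} eigenvalues stay below $2\sqrt{d-1}$ is ``the more difficult part of the conjecture'' and does not attempt it. You, by contrast, sketch an attack on that bulk bound too (high-trace count of tangle-free non-backtracking walks after subtracting the two rank-one outliers), which is indeed the strategy that later closed this kind of problem in the literature and is a real addition beyond what the paper offers.

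One technical caveat about your signal vector: with $\chi_e=\sigma_v$ for $e=(u\to v)$, the entry $(B\chi)_{u\to v}=\sum_{w\sim v,\,w\ne u}\sigma_w$ has mean $(d-1)\theta\,\sigma_v$ but variance $(d-1)(1-\theta^2)$, so $\|B\chi-(d-1)\theta\chi\|^2/\|\chi\|^2\approx(d-1)(1-\theta^2)$, a constant, not $o(1)$. As written the approximate-eigenvector claim fails for raw $\chi$; you need to propagate the signal, e.g.\ replace $\chi$ by $B^r\chi$ for large constant $r$ (the edge-space analogue of the paper's depth-$r$ aggregate $f$), and only then does the relative error become $((d-1)/((d-1)\theta)^2)^r\to0$ under $(d-1)\theta^2>1$. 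Your phrase ``small in the local-weak-limit sense'' gestures at this but hides the necessary iteration. Also worth noting: the Ihara--Bass correspondence $\lambda=\mu+(d-1)/\mu$ holds cleanly for $d$-regular graphs, as here, so that part of the dictionary is sound; but the pushdown from an approximate $B$-eigenvector to an approximate $A$-eigenvector (and the transfer of correlation with $\sigma$) should be stated as its own small lemma rather than folded into ``an explicit linear map,'' since in the paper's $A$-only approach this step simply does not arise.
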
 

By comparison, the results of~\cite{Friedman:10} imply that for all $\delta > 0$ with high probability,
if $G$ is a uniformly random $d$-regular graph then
$\lam_2(G) <  2 \sqrt{d-1} + \delta$. Thus the result above provides a simple spectral algorithm to distinguish
between the standard random $d$-regular model and the biased $d$-regular model when
$(d-1)\theta^2 > 1$. Moreover, our conjecture also says that
a spectral algorithm can be used to solve the clustering problem.

Below we sketch a proof for part of Conjecture~\ref{conj:eigen}. Specifically,
we will show that if $(d-1) \theta^2 > 1$ then there is an approximate
eigenvalue-eigenvector pair $(\lambda, f)$ (in the sense that $A f \approx \lambda f$ where
$A$ is the adjacencency matrix of $G$)
where $\lambda > 2\sqrt{d-1} + \delta$ and $f$ is correlated with the true partition.
The more difficult part of the conjecture would be to show that all other eigenvalues
are smaller than $2\sqrt{d-1}$. If this were true, it would imply that
$\lam_2(G) \approx \lam$ and that the eigenvector of $\lam_2(G)$ is close to $f$.

\begin{proof}
  We will assume that $G$ satisfies the following two properties:
\begin{itemize}
\item The process around each vertex looks like the Ising model on a $d$ regular tree.
\item Given two different vertices $u,v$,
  the process in neighborhoods of $u$ and $v$ are asymptotically independent. 
\end{itemize}

Let $r$ be a large constant and let $f(v) = \sum \{ \sigma_w : d(w,v) = r\}$. 
Then $\sum_v f(v) = 0$ and it is therefore orthogonal to the leading eigenvector. 
Let $A$ be the adjacency matrix of the graph. We claim that $\| Af - \lam f \|_2$
is much smaller than $\| f \|_2$, where 
$\lam = \theta^{-1} + (d-1) \theta$.
Note that $\lam > 2 \sqrt{d-1}$
if and only if $|\theta| > (d-1)^{-1/2}$. 

Assuming that the neighborhood of $v$ is a $d$-regular tree,
\[
  (Af)(v) = \sum_{w: d(v, w) = r+1} \sigma_w + (d-1) \sum_{w: d(v,w) = r-1} \sigma_w
\]
and so we can write $(Af)(v) - \lam f(v)$ as
\begin{multline}
  \label{eq:Af-lamf}
  Af(v) - \lam f(v) = \left( \sum_{w: d(v,w) = r+1} \sigma_w - \theta (d-1)\sum_{w: d(v,w)=r} \sigma_w \right) \\
  - \theta^{-1} \left( \sum_{w: d(v,w) = r} \sigma_w - \theta (d-1)\sum_{w: d(v,w)=r-1} \sigma_w \right) 
\end{multline}
We can re-arrange the first sum as
\[
  \sum_{\{w: d(v,w) = r\}} \sum_{\{w' \sim w, d(w', v) = r+1\}} \sigma_w - \theta \sigma_{w'}.
\]
Noting that all the summands are independent given $\{\sigma_w : d(v, w) = r\}$, we see that the above
sum has expectation zero and variance of the order $C (d-1)^r$ for some constant $C$.
Applying a similar decomposition (but at level $r-1$) to the second sum in~\eqref{eq:Af-lamf},
we get
\[
\E[(Af(v)-\lam f(v))^2] \leq C (d-1)^r.
\]
Summing over all $v$, we conclude that
\[
\E[\| Af-\lam f \|_2^2] \leq C n (d-1)^{r} .
\]
On the other hand, from~\cite{HM:98} it follows that for each $v$ individually 
\[
\E[f(v)^2] \geq C' ((d-1) \theta)^{2r},
\]
for some absolute constant $C'$. 
Since the value of $f(v)$ and $f(w)$ for $v \neq w$ are essentially independent,
it follows that with high probability
$\|f\|_2^2 > C' n ((d-1) \theta)^{2r}$. Taking $r$ sufficiently large we see
that $\| Af - \lam f \|_2 \leq \delta(r) \|f\|_2$ with high probability where 
$\delta(r) \to 0$ as $r \to \infty$.
\end{proof}

\section{Open problems} \label{sec:open}

Of the conjectures that we mentioned in the introduction,
Conjecture~\ref{conj:reconstruction} remains open. However,
there are variations and extensions of
Conjectures~\ref{conj:reconstruction}--\ref{conj:estimation}
that may be even more interesting.
For example, we could ask whether Conjecture~\ref{conj:reconstruction}
can be realized by one of several popular and efficient algorithms.

\begin{conjecture}
  \begin{enumerate}
    \item
      If $(a - b)^2 > 2(a + b)$ then the clustering problem
      in $\curlyG(n, \frac an, \frac bn)$ can be solved by
      a spectral algorithm.
    \item
      If $(a - b)^2 > 2(a + b)$ then the clustering problem
      in $\curlyG(n, \frac an, \frac bn)$ can be solved by
     the belief propogation algorithm of~\cite{Z:2011}.
    \item
      If $(a - b)^2 > 2(a + b)$ then the clustering problem
      in $\curlyG(n, \frac an, \frac bn)$ can be solved by
      simulating an Ising model on $G$, conditioned to be almost balanced.
  \end{enumerate}
\end{conjecture}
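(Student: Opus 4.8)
The plan is to treat the three parts separately, since each corresponds to a distinct algorithmic paradigm, but to exploit throughout the fact that all three ultimately hinge on the linearization of belief propagation around the uninformative fixed point: the relevant informative eigenvalue is $\frac{a-b}{2}$ and the ``bulk'' has radius $\sqrt{\frac{a+b}{2}}$, so the hypothesis $(a-b)^2>2(a+b)$ is precisely the statement that the informative mode separates from the bulk. This is also the same arithmetic that produced the threshold in Theorem~\ref{thm:markov} and in the cycle-counting estimator of Theorem~\ref{thm:cycles}.

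For part (1) I would abandon the adjacency matrix itself --- in the sparse regime its spectrum is ruined by localized eigenvectors supported on high-degree vertices --- in favour of the non-backtracking (Hashimoto) matrix $B$ on directed edges. The steps: (i) construct an approximate eigenvector $f$ of $B$ correlated with $\sigma$, essentially the computation sketched in Section~\ref{sec:regular} but with the $d$-regular recursion $\lambda=\theta^{-1}+(d-1)\theta$ replaced by its Poisson analogue, building $f$ from the sphere labels $\sum\{\sigma_w: d(v,w)=r\}$ and using the coupling of Proposition~\ref{prop:coupling} to compute the first and second moments of $f(v)$; (ii) show that every eigenvalue of $B$ other than the trivial one $\frac{a+b}{2}$ and this informative one is bounded in modulus by $\sqrt{\frac{a+b}{2}}+o(1)$, via a trace method controlling $\mathrm{tr}\big((BB^{*})^{\ell}\big)$ along tangle-free walks of length $\ell=\Theta(\log n)$ --- this is where the bulk of the work lies; (iii) conclude that thresholding the eigenvector associated with the second eigenvalue of $B$ yields a bisection with $\Omega(1)$ overlap with $\sigma$.

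For part (2) the natural route is to analyze the BP recursion on the limiting tree first: in the regime $(a-b)^2>2(a+b)$ one is above the Kesten--Stigum bound of Theorem~\ref{thm:markov}, so the tree message recursion started from a small perturbation of the uniform message drifts away from the uninformative fixed point --- this is exactly solvability of the ``robust'' (noisy/census) tree reconstruction problem. One then transfers to $G$: run $R$ rounds of BP with $R$ as in Proposition~\ref{prop:coupling}, couple the $R$-ball of each vertex to $(T,\tau)$, and argue that aggregating the depth-$R$ messages retains macroscopic correlation with $\sigma$. For part (3), observe that the Ising Gibbs measure on $G$ with the agreement/disagreement ratio tuned to $a/b$, conditioned to be nearly balanced, closely approximates the posterior $\P_n(\sigma\mid G)$ (the balance constraint absorbing the weak anti-ferromagnetic repulsion on non-edges). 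It then suffices to show (a) that a sample from this posterior is a.a.s.\ correlated with the planted $\sigma$ when $(a-b)^2>2(a+b)$, which should follow from the positive side of the reconstruction/contiguity dichotomy together with a second-moment argument that the posterior mass concentrates on configurations of nonzero overlap; and (b) that Glauber dynamics for this conditioned model mixes in time polynomial in $n$.

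The main obstacle is step (b), and more broadly the passage from ``a correlated labelling exists and is BP-computable on the tree'' to ``a polynomial-time algorithm finds it on $G$''. The conditioned Ising model here is anti-ferromagnetically biased and globally constrained, so there is no monotonicity to lean on, and a polynomial mixing bound would appear to require a multi-scale or spectral-gap argument built out of the local tree geometry; likewise in part (2) the BP messages emanating from overlapping $\log n$-neighbourhoods are correlated, and one needs a martingale or Efron--Stein concentration estimate to rule out the aggregate correlation washing out. Establishing rapid mixing --- equivalently, that the natural iterative schemes are not trapped near the uninformative fixed point --- in the presence of a planted signal is exactly the kind of question that remains hard even in simpler planted models, and I expect it to be the crux of the entire conjecture.
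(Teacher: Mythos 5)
This statement is a \emph{conjecture}, stated in Section~\ref{sec:open} (``Open problems'') and explicitly left unresolved: the paper offers no proof for any of the three parts, only the remarks that part~2 is related to Coja-Oghlan's work and part~3 would extend Dyer--Frieze. There is therefore no paper proof to compare yours against, and a ``proof proposal'' is necessarily speculative.

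That said, your sketch is sensible and correctly locates the obstacles; indeed you acknowledge them yourself. A few specific comments. For part~(1), you are right that the raw adjacency spectrum is unusable in the sparse regime and that the non-backtracking matrix $B$ is the natural substitute; the approximate eigenvector you describe mirrors the sketch in Section~\ref{sec:regular}, but the hard step --- your (ii), bounding the bulk of $B$'s spectrum via a trace/tangle-free-walk argument --- is genuinely difficult and not addressed in the paper; you cannot import it from anywhere here. For part~(2), the tree BP analysis you describe only gives that messages from a \emph{typical} boundary drift away from uniform; transferring this to a statement about BP run on $G$ itself requires handling the correlations between overlapping $R$-balls, and your appeal to ``a martingale or Efron--Stein concentration estimate'' is a placeholder rather than an argument. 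For part~(3), you correctly note that the planted posterior is a globally constrained, weakly anti-ferromagnetic Ising measure, and that a sample from it should have macroscopic overlap above the threshold --- but the claimed polynomial mixing of Glauber dynamics for this measure is exactly the content of the conjecture, and nothing in the paper (in particular, not the second-moment/contiguity machinery of Theorem~\ref{thm:distinguish}, which operates entirely \emph{below} threshold) provides leverage on it. In short: your proposal is a reasonable research plan, not a proof, and the gaps you flag in the final paragraph are the real ones; the paper deliberately leaves them open.
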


Of these conjectures, part 2 is closely related to the work
of Coja-Oghlan~\cite{CO:10}, while part 3 would substantially
extend the result of Dyer and Frieze~\cite{DF:89}.

Another way to extend
Conjectures~\ref{conj:reconstruction}--\ref{conj:estimation}
would be to increase the number of clusters from two to $k$.
The model $\curlyG(n, p, q)$ is well-studied for more than
two clusters, in which case it is known as the ``planted
partition'' model.
In fact, many of the results that we cited in the introduction
extend to $k > 2$ also. However, the work of~\cite{Z:2011} suggests
that the case of larger $k$ is rather more delicate than the case
$k = 2$, and that it contains interesting connections to complexity
theory.
The following conjecture comes from their work, and it is
based on a connection to phase transitions in the Potts model
on trees:
\begin{conjecture}\label{conj:k}
  For any $k$, there exists $c(k)$ such that
  if $a > b$ then:
  \begin{enumerate}
    \item if $\frac{(a-b)^2}{a + (k-1) b} < c(k)$ then the
      clustering problem cannot be solved;
    \item if $c(k) < \frac{(a-b)^2}{a + (k-1) b} < k$ then the
      clustering problem is solvable, but not
      in polynomial time;
    \item if $\frac{(a-b)^2}{a + (k-1) b} > k$ then the
      clustering problem can be solved in polynomial time.
  \end{enumerate}
  When $k \le 4$, $c(k) = k$ and so case 2 does not occur.
  When $k \ge 5$, $c(k) < k$.
\end{conjecture}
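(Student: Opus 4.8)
The plan is to reduce Conjecture~\ref{conj:k} to the reconstruction problem for the symmetric Potts broadcast process with $k$ states on a Poisson Galton--Watson tree, in direct analogy with the way Theorem~\ref{thm:non-reconstruction} and Theorem~\ref{thm:cycles} reduce the two-cluster problem to the Ising case. Write $d = \frac{a+(k-1)b}{k}$ for the expected degree and $\theta = \frac{a-b}{a+(k-1)b}$ for the nontrivial eigenvalue of the associated broadcast channel, which has multiplicity $k-1$; then $\frac{(a-b)^2}{a+(k-1)b} = k\,d\theta^2$, so the Kesten--Stigum bound $d\theta^2 > 1$ is exactly the threshold of case 3, and one should take $c(k)$ to be the actual Potts reconstruction threshold of the $\Pois(d)$ Galton--Watson tree expressed in the same normalization (which matches the conjectured $c(k)=k$ for $k \le 4$ and $c(k)<k$ for $k\ge 5$). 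Parts 1 and 3 are then within reach of the methods of this paper; the hardness claim in part 2 is not.

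For part 1 I would mirror Section~\ref{sec:non_reconstruction} step by step. First extend Proposition~\ref{prop:coupling}: a radius-$\Theta(\log n)$ ball of the $k$-cluster model $\curlyG(n,\frac an,\frac bn)$ couples to the $k$-state Potts broadcast on a $\Pois(d)$ Galton--Watson tree, by the same exclusion of short cycles and the same local approximation (now a multinomial--Poisson rather than binomial--Poisson estimate). Next extend Lemma~\ref{lem:independence}: conditioned on the graph the $k$-cluster model is still not a Markov random field --- there is again a weak repulsion between every non-adjacent pair whose labels differ, and a global conditioning forcing the $k$ classes to be nearly equal --- but the factorization argument goes through, the cross term $Q_{A,C}$ again being asymptotically independent of the labelling once the class sizes are balanced. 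Combining these with the non-reconstruction direction of Potts tree reconstruction below $c(k)$ (which holds by the very definition of the threshold) yields $\P_n(\sigma_u = \sigma_v \mid G) \to \frac1k$, which rules out any correlated clustering.

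For part 3 I would generalize the cycle count of Theorem~\ref{thm:cycle-poisson}: repeating the computation of Lemma~\ref{lem:mean-cycles} with the $k$-state Potts transfer matrix gives $\E_\P X_j \to \frac{1}{2j}\big(d^j + (k-1)(d\theta)^j\big)$, with the Poisson approximation still valid for $j = O(\log^{1/4} n)$. When $d\theta^2 > 1$ the term $(d\theta)^j$ eventually dominates the standard deviation $\sqrt{d^j + (k-1)(d\theta)^j}$, so the cycle counts --- computed efficiently via non-backtracking walks as in the proposition following Theorem~\ref{thm:cycles} --- give consistent estimators of $d$ and $\theta$, hence of $a$ and $b$; and, following the non-backtracking spectral program, the leading nontrivial eigenspace of the non-backtracking matrix is $(k-1)$-dimensional with eigenvalue $\approx d\theta > \sqrt d$, separated from the bulk, with eigenvectors correlated with the planted partition. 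For the solvability half of part 2, once the reconstruction threshold $c(k)$ is exceeded a first-moment/union-bound argument should show that the planted partition is, up to $o(n)$ errors, the unique near-maximizer of the likelihood, so exhaustive search recovers a correlated partition.

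The main obstacle is the intermediate ``hard'' regime $c(k) < \frac{(a-b)^2}{a+(k-1)b} < k$: asserting that \emph{no} polynomial-time algorithm succeeds there is an average-case hardness statement for which we have no unconditional tools, and the most one can realistically prove is supporting evidence --- failure of belief propagation and of all local/message-passing algorithms, or a shattering picture of the space of good partitions. Two further gaps are that the value of $c(k)$ for $k \ge 5$ is itself an open problem in Potts reconstruction, so part 1 is only as explicit as that theory, and that the positive spectral statement in part 3 requires a complete non-backtracking spectral analysis that is still open even for $k=2$ (it is Conjecture~\ref{conj:reconstruction}).
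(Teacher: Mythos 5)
This statement is a \emph{conjecture} in the paper, not a theorem: the paper states it as Conjecture~\ref{conj:k}, explicitly attributes it to Decelle et al., explains only that it ``is based on a connection to phase transitions in the Potts model on trees,'' and leaves it entirely open (citing~\cite{Sly:11} as evidence that even the tree-reconstruction side of the story is unresolved for $k>2$). There is therefore no ``paper's own proof'' for your attempt to be measured against, and no proof was expected.

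That said, your write-up is essentially an honest research plan rather than a claimed proof, and as such it is largely on target. Your normalization $d = \frac{a+(k-1)b}{k}$, $\theta = \frac{a-b}{a+(k-1)b}$, and the identity $\frac{(a-b)^2}{a+(k-1)b} = k\,d\theta^2$ are correct, so you have correctly located case~3 at the Kesten--Stigum bound $d\theta^2>1$ and identified $c(k)$ with the Potts reconstruction threshold; this is exactly the connection the paper has in mind. You also correctly identify the three genuine obstructions: (i) part~2 is an average-case hardness assertion for which no unconditional technique exists; (ii) the precise value of $c(k)$ for $k\ge 5$, and even the claim $c(k)=k$ for $k=3,4$, are open problems in Potts reconstruction on Galton--Watson trees; and (iii) the positive algorithmic claim in part~3 is not established even for $k=2$, where it is Conjecture~\ref{conj:reconstruction}. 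One caution on the parts you present as ``within reach'': the step from Potts tree non-reconstruction to non-reconstruction on $G$ requires extending Lemma~\ref{lem:independence} to the multi-class setting, and while the factorization structure does carry over, the analogue of the balancing event $\tilde\Omega$ now involves $k-1$ constraints and the estimate on $Q_{A,C}$ needs all pairwise class-count fluctuations to be $O(\sqrt n)$ simultaneously --- routine but not free. Similarly your proposed cycle-count estimator $\frac{1}{2j}\big(d^j+(k-1)(d\theta)^j\big)$ does give consistent estimation of $(d,\theta)$ above Kesten--Stigum, which establishes the estimation/distinguishing analogue, but that alone does not deliver a correlated partition, which is what ``solved'' means in part~3. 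So even the parts you label tractable are, at best, partial progress toward parts~1 and~3, and the conjecture as a whole remains open --- as the paper itself says.
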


Part of the difficulty in studying Conjecture~\ref{conj:k}
can be seen from work of the third author~\cite{Sly:11}.
His work contains the best known non-reconstruction results for
the Potts model on trees, but the results for $k > 2$ are less
precise and more difficult to prove than what is known for
$k = 2$.

Decelle et al.\ also state a version of Conjecture~\ref{conj:k} in the case $a < b$. Although
this case is not naturally connected to clustering, it has
close connections to random Boolean satisfiability problems and to spin glasses. In particular,
they conjecture that when $a < b$, case 2 above becomes much larger.

\subsection{Acknowledgments}  A.S. would like to thank Christian Borgs for suggesting the problem and Lenka Zdeborov{\'a} for useful discussions.  Part of this work was done while A.S. was at Microsoft Research, Redmond. The authors would also like to thank
Lenka Zdeborov{\'a} for comments on a draft of this work.
\bibliography{block-model}
\bibliographystyle{plain}
\end{document}